\documentclass[draft]{amsart}
\usepackage{amssymb}
\usepackage{braket}
\usepackage[all]{xy}

\newtheorem{thm}{Theorem}[section]
\newtheorem{cor}[thm]{Corollary}
\newtheorem{prop}[thm]{Proposition}
\newtheorem{lem}[thm]{Lemma}
\newtheorem{claim}[thm]{Claim}
\theoremstyle{definition}
\newtheorem{dfn}[thm]{Definition}
\newtheorem{ex}[thm]{Example}
\newtheorem{fact}[thm]{Fact}

\theoremstyle{remark}
\newtheorem{rem}[thm]{Remark}
\newtheorem{caution}[thm]{Caution}

\newcommand{\Ob}{\mathrm{Ob}}
\newcommand{\ppr}{^{\prime}}
\newcommand{\GS}{{}_G\mathit{Set}}
\newcommand{\C}{{}_G\mathit{Set}_{\mathcal{C}}}
\newcommand{\CO}{{}_G\mathit{Set}_{\mathcal{C},\mathcal{O}}}
\newcommand{\COC}{{}_G\mathit{Set}_{\mathcal{C},\mathcal{O}_{\mathcal{C}}}}
\newcommand{\COM}{{}_G\mathit{Set}_{\mathcal{C},\mathcal{O}_{\bullet}}}

\newcommand{\HG}{\text{-}{}_G\mathit{Set}_{\mathcal{C},\mathcal{O}_{\mathcal{C}}}}
\newcommand{\TS}{(\mathcal{C},\mathcal{O}_{\mathcal{C}},\mathcal{O}_{\bullet})}
\newcommand{\Tam}{\mathit{Tam}_{(\mathcal{C},\mathcal{O}_{\bullet})}}
\newcommand{\STam}{\mathit{STam}_{(\mathcal{C},\mathcal{O}_{\bullet})}}
\newcommand{\SMack}{\mathit{SMack}_{(\mathcal{C},\mathcal{O}_{\bullet})}}
\newcommand{\Mack}{\mathit{Mack}_{(\mathcal{C},\mathcal{O}_{\bullet})}}

\newcommand{\Mon}{\mathit{Mon}}
\newcommand{\Ab}{\mathit{Ab}}
\newcommand{\SRing}{\mathit{SRing}}
\newcommand{\Ring}{\mathit{Ring}}
\newcommand{\triv}{_\mathrm{triv}}
\newcommand{\Br}{\mathit{Br}}

\newcommand{\MaddG}{\mathit{Madd}(G)}
\newcommand{\RaddG}{\mathit{Radd}(G)}
\newcommand{\TamG}{\mathit{Tam}(G)}

\newcommand{\SMackG}{\mathit{SMack}(G)}
\newcommand{\GreenG}{\mathit{Green}(G)}
\newcommand{\sG}{{}_G\mathit{set}}
\newcommand{\ev}{\mathit{ev}^G}
\newcommand{\CrossG}{G\text{-}\mathit{xset}/QX}
\newcommand{\GRing}{G\text{-}\mathit{Ring}}
\newcommand{\ResalgG}{\mathit{Res}_{\mathrm{alg}}(G)}

\newcommand{\ApX}{(A\overset{p}{\rightarrow}X)}
\newcommand{\Am}{(A\overset{p}{\rightarrow}X,m_A)}
\newcommand{\Amo}{(A_1\overset{p_1}{\rightarrow}X,m_{A_1})}
\newcommand{\Amt}{(A_2\overset{p_2}{\rightarrow}X,m_{A_2})}
\newcommand{\Ami}{(A_i\overset{p_i}{\rightarrow}X,m_{A_i})}

\newtheorem{thmd}{{\rm \textbf{Theorem \ref{Thm1}}}$^{\prime}$}

\newtheorem{thmf}{{\rm \textbf{Theorem \ref{Thm1}}}}

\newtheorem{thms}{{\rm \textbf{Theorem \ref{Thm2}}}}

\newtheorem{thmt}{{\rm \textbf{Theorem \ref{Thm3}}}}

\newtheorem{factBrun}{{\rm \textbf{Fact \ref{FactBrun}}}}


\numberwithin{equation}{section}

\begin{document}

\title[Tambarization of a Mackey functor]{Tambarization of a Mackey functor and its application to the Witt-Burnside construction}

\author{Hiroyuki NAKAOKA}
\address{Department of Mathematics and Computer Science, Kagoshima University, 1-21-35 Korimoto, Kagoshima, 890-0065 Japan}

\email{nakaoka@sci.kagoshima-u.ac.jp}

\thanks{The author wishes to thank Professor Toshiyuki Katsura for his encouragement}
\thanks{The author wishes to thank Professor Fumihito Oda for stimulating arguments and useful comments, Professor Daisuke Tambara and Professor Tomoyuki Yoshida for their comments and advices}
\thanks{Supported by JSPS Grant-in-Aid for Young Scientists (B) 22740005}

\begin{abstract}
For an arbitrary group $G$, a ({\it semi}-){\it Mackey functor} is a pair of covariant and contravariant functors from the category of $G$-sets, and is regarded as a $G$-bivariant analog of a commutative (semi-)group. In this view, a $G$-bivariant analog of a (semi-)ring should be a (semi-)Tambara functor. A Tambara functor is firstly defined by Tambara, which he called a TNR-functor, when $G$ is finite. As shown by Brun, a Tambara functor plays a natural role in the Witt-Burnside construction.

It will be a natural question if there exist sufficiently many examples of Tambara functors, compared to the wide range of Mackey functors. In the first part of this article, we give a general construction of a Tambara functor from any Mackey functor, on an arbitrary group $G$. In fact, we construct a functor from the category of semi-Mackey functors to the category of Tambara functors.
This functor gives a left adjoint to the forgetful functor, and can be regarded as a $G$-bivariant analog of the monoid-ring functor.

In the latter part, when $G$ is finite, we invsetigate relations with other Mackey-functorial constructions ---crossed Burnside ring, Elliott's ring of $G$-strings, Jacobson's $F$-Burnside ring--- all these lead to the study of the Witt-Burnside construction.
\end{abstract}

\maketitle

\section{Introduction and Preliminaries}
For an arbitrary group $G$, a (semi-)Mackey functor is a pair of covariant and contravariant functors from the category of \lq $G$-sets'. More precisely, as defined in \cite{B-B}, if we are given a {\it Mackey system} $(\mathcal{C},\mathcal{O})$ on $G$, then subcategories of the category of $G$-sets $\C$ and $\CO$ are associated, and a {\it Mackey functor} is a pair $M=(M^{\ast},M_{\ast})$ of a covariant functor $M_{\ast}\colon\CO\rightarrow \Ab$ and an additive contravariant functor $M^{\ast}\colon\C\rightarrow \Ab$. If $G$ is finite, we obtain an ordinary (semi-)Mackey functor on $G$ as in Remark \ref{RemFinMack}.

As in \cite{Yoshida}, for any finite group $G$, a (semi-)Mackey functor can be regarded as a $G$-bivariant analog of a commutative (semi-)group. In fact if $G$ is trivial, the category of (semi-)Mackey functors are canonically equivalent to the category of commutative (semi-)groups.
In this view, a $G$-bivariant analog of a (semi-)ring will be a (semi-)Tambara functor. A Tambara functor is firstly defined by Tambara in \cite{Tam}, which he called a TNR-functor, for any finite group $G$. As shown by Brun, a Tambara functor plays a natural role in the Witt-Burnside construction (\cite{Brun}). (See Fact \ref{FactBrun} in this article.)

It will be a natural question if there exist sufficiently many examples of Tambara functors, compared to the wide range of Mackey functors. In this article, we give a general construction of a Tambara functor from any Mackey functor, on an arbitrary group $G$. In fact, we construct a functor from the category of semi-Mackey functors to the category of semi-Tambara functors.
More precisely, in Theorem \ref{Thm1}, for any Tambara system (Definition \ref{DefTamSys}) $\TS$ on $G$, we construct a functor
\[ \mathcal{S}\colon\SMack\rightarrow\STam, \]
where $\SMack$ and $\STam$ are appropriately defined categories of semi-Mackey and semi-Tambara functors, respectively: 
\begin{thmf}
Let $\TS$ be a Tambara system on $G$. The construction of $\mathcal{S}_M$ in Proposition \ref{PropForThm1} gives a functor
\[ \mathcal{S}\colon\SMack\rightarrow\STam. \]
\end{thmf}
\noindent Composing $\mathcal{S}$ with other known functors, we obtain functors
\begin{eqnarray*}
\mathcal{T}\colon\SMack&\rightarrow&\Tam,\\
\Mack&\rightarrow&\STam,\\
\Mack&\rightarrow&\Tam,
\end{eqnarray*}
where $\Mack$ and $\Tam$ are the categories of Mackey and Tambara functors, respectively.

Moreover in Theorem \ref{Thm2}, we show $\mathcal{S}$ is left adjoint to the forgetful functor $\STam\rightarrow\SMack$.
\begin{thms}
Let $\TS$ be any Tambara system on $G$.
The functor constructed in Theorem \ref{Thm1}
\[ \mathcal{S}\colon\SMack\rightarrow\STam \]
is left adjoint to the forgetful functor $\mu\colon\STam\rightarrow\SMack$.
\end{thms}
\noindent As a corollary, composition
\[ \mathcal{T}=\gamma\circ\mathcal{S}\colon\SMack\rightarrow\Tam \]
is left adjoint to the forgetful functor $\mu\circ \mathcal{U}\colon\Tam\rightarrow\SMack$ (Corollary \ref{CorOfThm2}). Thus Theorem \ref{Thm2} means that $\mathcal{T}$ can be regarded as a $G$-bivariant analog of the monoid-ring functor.

\bigskip

In the latter part of this article, we restrict ourselves to the finite group case, and investigate the connection to other constructions, especially in relation with the Witt-Burnside construction.

Firstly in section \ref{3-1}, we show the relation with the crossed Burnside ring. In fact, if we are given a $G$-monoid $Q$, then by applying Corollary \ref{CorOfThm2} to the fixed point functor $\mathcal{P}_Q$  associated to $Q$, we obtain a Tambara functor $\mathcal{T}_{\mathcal{P}_Q}$. This generalizes the crossed Burnside ring functor in \cite{O-Y2}, \cite{O-Y3}. Indeed, if $Q$ is a finite $G$-monoid, we have an isomorphism of Tambara functors between $\mathcal{T}_{\mathcal{P}_Q}$ and the crossed Burnside ring functor $\Omega_Q$:
\[ \mathcal{T}_{\mathcal{P}_Q}\cong\Omega_Q \]

Secondly in section \ref{3-2}, we show the relationship with the Witt-Burnside construction. Historically, Tambara functors were applied to the Witt-Burnside construction firstly by Brun:
\begin{factBrun}[Theorem B, Theorem 15 in \cite{Brun}]
For any finite group $G$, the evaluation at $G/e$
\[ \TamG\rightarrow\GRing\ ; \ T\mapsto T(G/e) \]
has a right adjoint functor $L_G$. Here, $\GRing$ denotes the category of $G$-rings. If $G$ acts trivially on a ring $R$, then for any subgroup $H\le G$, there is an isomorphism
\[ \mathbb{W}_H(R)\cong L_G(R)(G/H). \]
\end{factBrun}

\noindent Here $\mathbb{W}_H(R)$ is the Witt-Burnside ring associated to $H$ and $R$.
Combining this with Theorem \ref{Thm2}, we obtain an isomorphism of functors, which gives a description of the Witt-Burnside ring of a monoid-ring:
\begin{thmt}
For any finite group $G$, there is an isomorphism of functors
\begin{equation*}
\mathbb{W}\circ\mathbb{Z}[-]\cong\mathcal{T}\circ\mathcal{L}.
\qquad
\xy
(-12,6)*+{\Mon}="0";
(12,6)*+{\SMackG}="2";
(-12,-6)*+{\Ring}="4";
(12,-6)*+{\TamG}="6";
{\ar^<<<<<<{\mathcal{L}} "0";"2"};
{\ar_{\mathbb{Z}[-]} "0";"4"};
{\ar^{\mathcal{T}} "2";"6"};
{\ar_<<<<<<{\mathbb{W}} "4";"6"};
{\ar@{}|\circlearrowright "0";"6"};
\endxy
\end{equation*}
\end{thmt}

\noindent Here, $\mathcal{L}$ is the functor defined in Claim \ref{ClaimConst}. 
Theorem \ref{Thm3} can be also regarded as an enhancement of the Elliott's isomorphism of rings (Theorem 1.7 in \cite{Elliott}) to a functorial level.

Thirdly in section \ref{3-3}, relating our construction to Jacobson's $F$-Burnside ring functor, we investigate the underlying Green functor structure of $\mathcal{T}_M$.
Jacobson functorially associated a Green functor $\mathcal{A}_F$ to any additive contravariant functor $F$ (\cite{Jacobson}). In fact, this functor gives the underlying Green functor structure of $\mathcal{T}_M$. Namely, we have an isomorphism of Green functors $\mathcal{T}_M\cong\mathcal{A}_{M^{\ast}}$. In this view, Theorem \ref{Thm1} can be restated as follows:
\begin{thmd}
Let $G$ be a finite group and $F$ be an object in $\MaddG$. 
If $F$ is moreover a semi-Mackey functor, i.e., if there exists a semi-Mackey functor with the contravariant part $F$, then $\mathcal{A}_F$ has a structure of a Tambara functor.
\end{thmd}
In turn, as an application of Theorem \ref{Thm1}$\ppr$, we can endow a Tambara functor structure on the Brauer ring functor defined by Jacobson (Corollary \ref{CorBr}).

Moreover, combining this with Theorem 3.13 in \cite{N_BrRng}, the underlying Green functor structure is also written by using Boltje's $(-)_+$-construction. Namely, we have an isomorphism of Green functors $\mathcal{T}_M\cong(\mathcal{R}_{\mathbb{Z}[M^{\ast}]})_+$ (Corollary \ref{CorPlus}).

\bigskip

\bigskip

Throughout this article, we fix an arbitrary group $G$. Its unit element is denoted by $e$.
$H\le G$ means $H$ is a subgroup of $G$.
$\GS$ denotes the category of (not necessarily finite) $G$-sets and $G$-equivariant maps. The category of {\it finite} $G$-sets is denoted by $\sG$.
For a $G$-set $X$ and a point $x\in X$, we denote the stabilizer of $x$ by $G_x$. For any $H\le G$ and $g\in G$, ${}^g\! H$ and $H^g$ denotes the conjugation ${}^g\! H=gHg^{-1}$, $H^g=g^{-1}Hg$.
Monoids are assumed to be commutative and have multiplicative units $1$. (Semi-)rings are assumed to be commutative, and have additive units $0$ and multiplicative units $1$.
We denote the category of monoids by $\Mon$, the category of (resp. semi-)rings by $\Ring$ (resp. $\SRing$), and the category of abelian groups by $\Ab$. A monoid homomorphism preserves $1$, and a (semi-)ring homomorphism preserves $0$ and $1$.

For any category $\mathcal{K}$ and any pair of objects $X$ and $Y$ in $\mathcal{K}$, the set of morphisms from $X$ to $Y$ in $\mathcal{K}$ is denoted by $\mathcal{K}(X,Y)$. For each $X\in\Ob(\mathcal{K})$, the comma category of $\mathcal{K}$ over $X$ is denoted by $\mathcal{K}/X$.

\begin{dfn}[\cite{B-B}]\label{DefMackSys}
A {\it Mackey system} on $G$ is a pair $(\mathcal{C},\mathcal{O})$ of
\begin{enumerate}
\item[{\rm (a)}] a set $\mathcal{C}$ of subgroups of $G$, closed under conjugation and finite intersections,
\item[{\rm (b)}] a family $\mathcal{O}=\{\mathcal{O}(H)\}_{H\in\mathcal{C}}$ of subsets
\[ \mathcal{O}(H)\subseteq\mathcal{C}(H), \]
where $\mathcal{C}(H)$ is defined by $\mathcal{C}(H)=\{ U\in\mathcal{C}\mid U\le H\}$ for each $H\in\mathcal{C}$,
\end{enumerate}
which satisfies
\begin{enumerate}
\item[{\rm (i)}] $[H:U]<\infty$
\item[{\rm (ii)}] $\mathcal{O}(U)\subseteq\mathcal{O}(H)$
\item[{\rm (iii)}] $\mathcal{O}(gHg^{-1})=g\mathcal{O}(H)g^{-1}$
\item[{\rm (iv)}] $U\cap V\in\mathcal{O}(V)$
\end{enumerate}
for all $H\in\mathcal{C}$, $U\in\mathcal{O}(H)$, $V\in\mathcal{C}(H)$ and $g\in G$.
\end{dfn}

\begin{ex}\label{ExMackSys}
Let $\mathcal{C}$ be a set of subgroups as in {\rm (a)} in Definition \ref{DefMackSys}. If we define $\mathcal{O}_{\mathcal{C}}=\{\mathcal{O}_{\mathcal{C}}(H)\}_{H\in\mathcal{C}}$ by
\[ \mathcal{O}_{\mathcal{C}}(H)=\{ U\in\mathcal{C}(H)\mid [H:U]<\infty \} \]
for each $H\in\mathcal{C}$, then $(\mathcal{C},\mathcal{O}_{\mathcal{C}})$ is a Mackey system.

In particular, if $G$ is a topological group and $\mathcal{C}$ is the set of all closed (resp. open) subgroups of $G$, we call $(\mathcal{C},\mathcal{O}_{\mathcal{C}})$ the {\it natural} (resp. {\it open-natural}) Mackey system on $G$ (Definition 2.4 in \cite{N_Tam}).
\end{ex}

\begin{dfn}\label{DefGSets}
For a Mackey system $(\mathcal{C},\mathcal{O})$ on $G$, define subcategories $\CO\subseteq \C$ of $\GS$ as follows.
\begin{enumerate}
\item $\C$ is a full subcategory of $\GS$, whose objects are those $X\in\Ob(\GS)$ satisfying $G_x\in\mathcal{C}$ for any $x\in X$.
\item $\CO$ is a category with the same objects as $\C$, whose morphisms from $X$ to $Y$ in $\CO$ are those $f\in \C(X,Y)$ satisfying the following.
\begin{enumerate}
\item[{\rm (i)}] $f$ has finite fibers, i.e., $f^{-1}(y)$ is a finite set for each $y\in Y$.
\item[{\rm (ii)}] $G_x\in\mathcal{O}(G_{f(x)})$ for each $x\in X$.
\end{enumerate}
\end{enumerate}
\end{dfn}

\begin{rem}
It can be easily seen that morphisms in $\CO$ are stable under pull-backs in $\C$. Namely, for any pull-back diagram in $\C$
\[
\xy
(-6,6)*+{X^{\prime}}="0";
(6,6)*+{Y^{\prime}}="2";
(-6,-6)*+{X}="4";
(6,-6)*+{Y}="6";
(8,-7)*+{,}="10";
(0,0)*+{\square}="8";
{\ar^{g^{\prime}} "0";"2"};
{\ar_{f^{\prime}} "0";"4"};
{\ar^{f} "2";"6"};
{\ar_{g} "4";"6"};
\endxy
\]
$g\in\CO(X,Y)$ implies $g\ppr\in \CO(X\ppr,Y\ppr)$.
\end{rem}

\begin{dfn}\label{DefMackFtr}
A {\it semi-Mackey functor} $M$ is a pair $(M_{\ast},M^{\ast})$ of
\begin{itemize}
\item[{\rm (a)}] contravariant functor $M^{\ast}\colon\C\rightarrow\Mon$,
\item[{\rm (b)}] covariant functor $M_{\ast}\colon\CO\rightarrow\Mon$,
\end{itemize}
satisfying the following conditions.
\begin{enumerate}
\item[{\rm (M0)}] $M^{\ast}(X)=M_{\ast}(X)$ for each $X\in\Ob(\C)$. We put $M(X)=M^{\ast}(X)=M_{\ast}(X)$.
\item[{\rm (M1)}] {\rm (Mackey condition)} If
\[
\xy
(-6,6)*+{X^{\prime}}="0";
(6,6)*+{Y^{\prime}}="2";
(-6,-6)*+{X}="4";
(6,-6)*+{Y}="6";
(0,0)*+{\square}="8";
{\ar^{g^{\prime}} "0";"2"};
{\ar_{f^{\prime}} "0";"4"};
{\ar^{f} "2";"6"};
{\ar_{g} "4";"6"};
\endxy
\]
is a pull-back diagram in $\C$ with $g\in\CO(X,Y)$, then
\[ M^{\ast}(f)\circ M_{\ast}(g)=M_{\ast}(g\ppr)\circ M^{\ast}(f\ppr). \]
\item[{\rm (M2)}] {\rm (Additivity)} For any direct sum decomposition $X=\displaystyle{\coprod_{\lambda\in\Lambda}}X_{\lambda}$ in $\C$, the natural map
\[ (M^{\ast}(i_{\lambda}))_{\lambda\in\Lambda}\colon M(X)\rightarrow \prod_{\lambda\in\Lambda}M(X_{\lambda}) \]
is an isomorphism, where $i_{\lambda}\colon X_{\lambda}\hookrightarrow X\ (\lambda\in\Lambda)$ are inclusions. In particular, $M(\emptyset)$ is trivial.
\end{enumerate}

A semi-Mackey functor $M$ is a {\it Mackey functor} if $M(X)$ is an abelian group for each $X\in\Ob(\C)$, namely if $M^{\ast}$ and $M_{\ast}$ are functors to $\Ab$. A {\it morphism of} ({\it semi}-){\it Mackey functors} from $M$ to $N$ is a family of monoid homomorphisms $\varphi=\{\varphi_X\colon M(X)\rightarrow N(X) \}_{X\in\Ob(\C)}$, which is natural with respect to both the contravariant part and the covariant part. The category of semi-Mackey functors (resp. Mackey functors) on $(\mathcal{C},\mathcal{O})$ is denoted by $\mathit{SMack}_{(\mathcal{C},\mathcal{O})}$ (resp. $\mathit{Mack}_{(\mathcal{C},\mathcal{O})}$).
\end{dfn}

\begin{rem}\label{RemFinMack}\label{FinInf}
Originally in \cite{B-B}, when $G$ is a profinite group, the open-natural Mackey system is called the {\it finite natural} Mackey system. A Mackey functor on a finite natural Mackey system is also called a $G$-{\it modulation}.

In particular when $G$ is a finite group, regarded as a discrete topological group, then the natural and the finite natural Mackey systems coincide, and we abbreviate the category of semi-Mackey functors on the (finite) natural Mackey system on $G$ to $\mathit{SMack}(G)$. 

In the ordinary definition of a Mackey (or a Tambara) functor, we work on the category of {\it finite} $G$-sets, instead of $\GS$.
However, by the additivity of a Mackey (or a Tambara) functor, the resulting categories become equivalent.
Of course, if we restrict ourselves to finite groups, then the same arguments as below are also possible using the category of finite $G$-sets.

Also remark that if $G=\{e\}$ is trivial, the evaluation at the trivial one-point $G$-set $\{e\}$ gives an equivalence of categories
\[ \mathit{SMack}(\{e\})\overset{\simeq}{\longrightarrow}\Mon\ \  ; \ \ M\mapsto M(\{e\}). \]
Similarly, $\mathit{Mack}(\{e\})$ is equivalent to $\Ab$.
\end{rem}

\begin{ex}\label{ExFix}
Let $(\mathcal{C},\mathcal{O})$ be any Mackey system on $G$.
For any $G$-monoid $Q$, there exists a naturally associated semi-Mackey functor $\mathcal{P}_Q\in\Ob(\mathit{Mack}_{(\mathcal{C},\mathcal{O})})$, which we call the {\it fixed point functor} valued in $Q$, as follows.
\begin{itemize}
\item[{\rm (a)}] For each $X\in\Ob(\C)$, define $\mathcal{P}_Q$ by 
\[ \mathcal{P}_Q(X)=\GS(X,Q). \]
\item[{\rm (b)}] For each morphism $f\in\C(Y,X)$,
\[ \mathcal{P}_Q^{\ast}(f)\colon \GS(X,Q)\rightarrow \GS(Y,Q) \]
is defined as the composition by $f$.
\item[{\rm (b)}] For each morphism $g\in\CO(Y,X)$,
\[ \mathcal{P}_{Q\,\ast}(g)\colon \GS(X,Q)\rightarrow \GS(Y,Q) \]
is defined by 
\[ (\mathcal{P}_{Q\,\ast}(g)(\alpha))(y)=\prod_{x\in g^{-1}(y)}\alpha(x), \]
for any $\alpha\in\GS(X,Q)$ and $y\in Y$.
\end{itemize}
\end{ex}

\section{Tambarization of a Mackey functor}

\subsection{Review on Tambara functors}

A Tambara system $(\mathcal{C},\mathcal{O}_+,\mathcal{O}_{\bullet})$ on $G$ consists of two Mackey systems $(\mathcal{C},\mathcal{O}_+)$ and $(\mathcal{C},\mathcal{O}_{\bullet})$ satisfying some compatibility conditions for exponential diagrams (see \cite{N_Tam}). In this article, we only consider the following special case (Proposition 4.5 in \cite{N_Tam}). 
As in the previous section, if one only consider the finite group case, then one may work over the category of finite $G$-sets, using the original definition by Tambara \cite{Tam}.

\begin{dfn}\label{DefTamSys}
Let $\widetilde{\mathcal{C}}$ be a set of subsets of $G$, closed under left and right translation, finite intersections and finite unions.
Put $\mathcal{C}=\{ H\in\widetilde{\mathcal{C}}\mid H\le G\}$. As in Example \ref{ExMackSys}, $(\mathcal{C},\mathcal{O}_{\mathcal{C}})$ is a Mackey system. We say $(\mathcal{C},\mathcal{O}_{\mathcal{C}},\mathcal{O}_{\bullet})$ is a {\it Tambara system} on $G$ if it satisfies the following.
\begin{enumerate}
\item[{\rm (i)}] $(\mathcal{C},\mathcal{O}_{\bullet})$ is a Mackey system on $G$.
\item[{\rm (ii)}] $\mathcal{O}_{\bullet}$ satisfies $H\in\mathcal{O}_{\bullet}(H)$ for each $H\in\mathcal{C}$.
\end{enumerate}
\end{dfn}

\begin{rem}
For any $X,Y\in\Ob(\C)$, we have
\[ \COC(X,Y)=\{ f\in\C(X,Y)\mid f\ \text{has finite fibers} \}. \]
Thus, for any commutative diagram in $\C$,
\[
\xy
(-8,6)*+{A}="0";
(8,6)*+{A\ppr}="2";
(0,-6)*+{X}="4";
(0,9)*+{}="6";
{\ar^{f} "0";"2"};
{\ar_{p} "0";"4"};
{\ar^{p\ppr} "2";"4"};
{\ar@{}|\circlearrowright "4";"6"};
\endxy
\]
$p\in\COC(A,X)$ implies $f\in\COC(A,A\ppr)$.
\end{rem}
By the above remark, category $\mathcal{S}_{\mathcal{C},\mathcal{O}|X}$ used in \cite{N_Tam} agrees with the comma category $\COC/X$, and we have the following (Definition 4.1 and Proposition 4.5 in \cite{N_Tam}).
\begin{rem}\label{RemExp}
Let $\TS$ be a Tambara system. Let $\eta\in\COM(X,Y)$ be any morphism.
For any $\ApX\in\Ob(\COC/X)$, we define $(\Pi_{\eta}(A)\overset{\pi=\pi(p)}{\longrightarrow}Y)$ in $\Ob(\COC/Y)$ by

\[\Pi_{\eta}(A)=\Set{(y,\sigma)|%
\begin{array}{l}%
y\in Y, \\
\sigma\colon \eta^{-1}(y)\rightarrow A \ \, \text{is a map of sets},\\
\ p\circ \sigma=\mathrm{id}_{\eta^{-1}(y)}%
\end{array}},
\quad
\pi(y,\sigma)=y. \]
We abbreviately write $\Pi_{\eta}\ApX=(\Pi_{\eta}(A)\overset{\pi}{\rightarrow} Y)$. For any morphism $a\colon \ApX\rightarrow(A\ppr\overset{p\ppr}{\rightarrow}X)$ in $\COC/X$, define $\Pi_{\eta}(a)\colon\Pi_{\eta}\ApX\rightarrow\Pi_{\eta\ppr}(A\ppr\overset{p\ppr}{\rightarrow}X)$ by $\Pi_{\eta}(a)(y,\sigma)=(y,a\circ\sigma)$.
Then we obtain a functor
\[ \Pi_{\eta}\colon\COC/X\rightarrow\COC/Y, \]
which is right adjoint to the pull-back functor defined by $\eta$
\[ X\underset{Y}{\times}-\colon\COC/Y\rightarrow\COC/X. \]
\end{rem}

By this adjoint property, for any $p\in \COC(A,X)$, we have a commutative diagram
\begin{equation}\label{DiagExp1}
\xy
(-14,6)*+{X}="0";
(-14,-6)*+{Y}="2";
(-1,6)*+{A}="4";
(9,6)*+{}="5";
(18,4.7)*+{X\underset{Y}{\times}\Pi_{\eta}(A)}="6";
(16,2)*+{}="7";
(16,-6)*+{\Pi_{\eta}(A)}="8";
{\ar_{\eta} "0";"2"};
{\ar_{p} "4";"0"};
{\ar_>>>>>{\lambda} "5";"4"};
{\ar^>>>>{\rho} "7";"8"};
{\ar^{\pi=\pi(p)} "8";"2"};
{\ar@{}|\circlearrowright "0";"8"};
\endxy
\end{equation}
where $\rho$ is the pull-back of $\eta$ by $\pi$, and $\lambda$ is the morphism corresponding to $\mathrm{id}_{\Pi_{\eta}(A)}$, and $p\circ\lambda$ becomes the pull-back of $\pi$ by $\eta$.
Any commutative diagram in $\C$
\begin{equation}\label{DiagExp2}
\xy
(-12,6)*+{X}="0";
(-12,-6)*+{Y}="2";
(0,6)*+{Z}="4";
(12,6)*+{X^{\prime}}="6";
(12,-6)*+{Y^{\prime}}="8";
{\ar_{\eta} "0";"2"};
{\ar_{\xi} "4";"0"};
{\ar_{\zeta} "6";"4"};
{\ar^{\eta\ppr} "6";"8"};
{\ar^{\upsilon} "8";"2"};
{\ar@{}|\circlearrowright "0";"8"};
\endxy
\end{equation}
isomorphic to $(\ref{DiagExp1})$ for some $p\in\COC(A,X)$, is called an {\it exponential diagram}.

\begin{dfn}[Definition 4.10, Remark 4.11 in \cite{N_Tam}]\label{DefTamFtr}
Let $\TS$ be a Tambara system on $G$. A {\it semi-Tambara functor} $S$ on $\TS$ is a triplet $(S^{\ast},S_+,S_{\bullet})$ consisting of
\begin{enumerate}
\item[{\rm (a)}] a semi-Mackey functor $(S^{\ast},S_+)$ on $(\mathcal{C},\mathcal{O}_{\mathcal{C}})$
\item[{\rm (b)}] a semi-Mackey functor $(S^{\ast},S_{\bullet})$ on $(\mathcal{C},\mathcal{O}_{\bullet})$
\end{enumerate}
which satisfies the following conditions.
\begin{enumerate}
\item[{\rm (i)}] $S(X)(=S^{\ast}(X)=S_+(X)=S_{\bullet}(X))$ is a semi-ring for each $X\in\Ob(\C)$, with the addition and the multiplication induced from the monoid structures of $S^{\ast}(X)=S_+(X)$ and $S^{\ast}(X)=S_{\bullet}(X)$, respectively.
\item[{\rm (ii)}] For any exponential diagram $(\ref{DiagExp2})$, we have
\[ S_+(\upsilon)\circ S_{\bullet}(\eta\ppr)\circ S^{\ast}(\zeta)=S_{\bullet}(\eta)\circ S_+(\xi). \]
\end{enumerate}

For any morphism $f$ in $\C$ (resp. $\COC$, $\COM$), we abbreviate $S^{\ast}(f)$ (resp. $S_+(f)$, $S_{\bullet}(f)$) to $f^{\ast}$ (resp. $f_+$, $f_{\bullet}$). These are called the {\it structure morphisms} of semi-Tambara functor $S$. $f_+$ (resp. $f_{\bullet}$) is called an {\it additive} (resp. {\it multiplicative}) {\it transfer}.

If $S(X)$ is moreover a ring for each $X\in\Ob(\C)$, then $S=(S^{\ast},S_+,S_{\bullet})$ is called a {\it Tambara functor}.
\end{dfn}

If $S$ and $T$ are (semi-)Tambara functors, a {\it morphism} $\psi$ from $S$ to $T$ is defined to be a family $\psi=\{\psi_X\}_{X\in\Ob(\C)}$ of semi-ring homomorphisms $\psi_X\colon S(X)\rightarrow T(X)$, compatible with structure morphisms. In other words, $\psi$ is a morphism of (semi-)Tambara functors if and only if it gives morphisms of additive and multiplicative semi-Mackey functors
\[ \psi\colon(S^{\ast},S_+)\rightarrow(T^{\ast},T_+),\quad\psi\colon(S^{\ast},S_{\bullet})\rightarrow(T^{\ast},T_{\bullet}). \]
We denote the category of semi-Tambara functors on $\TS$ by $\STam$. (We omitted $\mathcal{O}_{\mathcal{C}}$, since it is determined by $\mathcal{C}$.)

\begin{rem}[Theorem 5.16 and Proposition 5.17 in \cite{N_Tam}]\label{RemForget}
By definition, if we denote the multiplicative part by $S^{\mu}=(S^{\ast},S_{\bullet})$ for each $S\in\Ob(\STam)$, this gives a forgetful functor
\begin{eqnarray*}
\mu\colon\STam&\rightarrow&\SMack\\
(\psi\colon S\rightarrow T)&\mapsto&(\psi\colon S^{\mu}\rightarrow T^{\mu}).
\end{eqnarray*}

The category of Tambara functors is denoted by $\Tam$. This is a full subcategory in $\STam$, whose inclusion we denote by
\[ \mathcal{U}\colon\Tam\rightarrow\STam. \]
Conversely, if we are given an object $S$ in $\STam$, then the correspondence
\[ \gamma S\colon\Ob(\C)\ni X\mapsto K_0(S(X))\in\Ob(\Ring) \]
becomes a Tambara functor, with appropriately defined structure morphisms. Here, $K_0\colon\SRing\rightarrow\Ring$ is the ring-completion functor.

Moreover, for any $\psi\in\STam(S,T)$, $\gamma\psi:=\{ K_0(\psi_X)\}_{X\in\Ob(\C)}$ gives a morphism of Tambara functors $\gamma\psi:\gamma S\rightarrow\gamma T$, and we obtain a functor
\[ \gamma\colon\STam\rightarrow\Tam. \]
In fact, $\gamma$ is the left adjoint functor of $\mathcal{U}$. (For the finite group case, see \cite{Tam}.)
\end{rem}

\begin{rem}\label{RemFinTam}
As in Remark \ref{RemFinMack}, similarly to the category of Mackey functors, when $G$ is finite and $(\mathcal{C},\mathcal{O}_{\bullet})$ is the natural Mackey system, then $\Tam$ is denoted by $\mathit{Tam}(G)$.

If $G=\{e\}$ is trivial, the evaluation at the trivial one-point $G$-set $\{e\}$ gives an equivalence of categories
\[ \mathit{Tam}(\{e\})\overset{\simeq}{\longrightarrow}\Ring\ \  ;\ \ T\mapsto T(\{e\}). \]
Similarly, $\mathit{STam}(\{e\})$ is equivalent to $\SRing$.
\end{rem}

\subsection{Diagram Lemmas}
In this section, we introduce some diagram lemmas concerning exponential diagrams, which will be needed later. Their proofs are straightforward, and we omit them. For the finite group case, see \cite{Tam}.

\begin{lem}\label{LemA}
Let $\eta\in\COM(X,Y)$ be any morphism, and let $\Amo$, $\Amt$ be two objects in $\COC/X$. Assume
\[
\xy
(-14,6)*+{X}="0";
(-14,-6)*+{Y}="2";
(-1,6)*+{A_i}="4";
(9,6)*+{}="5";
(18,4.7)*+{X\underset{Y}{\times}\Pi_{\eta}(A_i)}="6";
(16,2)*+{}="7";
(16,-6)*+{\Pi_{\eta}(A_i)}="8";
(0,0)*+{\mathit{exp}}="10";
{\ar_{\eta} "0";"2"};
{\ar_{p_i} "4";"0"};
{\ar_>>>>>{\lambda_i} "5";"4"};
{\ar^>>>>{\rho_i} "7";"8"};
{\ar^{\pi_i} "8";"2"};
\endxy
\]
is the exponential diagram for each $i=1,2$.
If we let $A$ be the fibered product of $A_1$ and $A_2$ over $X$
\[
\xy
(-20,7.25)*+{A=}="-2";
(-8,3.4)*+{}="-1";
(-9.5,5.8)*+{A_1\underset{X}{\times}A_2}="0";
(-2,7)*+{}="1";
(8,7)*+{A_2}="2";
(-8,-7)*+{A_1}="4";
(8,-7)*+{Y}="6";
(10,-8)*+{,}="10";
(0,0)*+{\square}="8";
{\ar^>>>>>{\varpi_2} "1";"2"};
{\ar_>>>>>{\varpi_1} "-1";"4"};
{\ar^{p_2} "2";"6"};
{\ar_{p_1} "4";"6"};
\endxy
\]
then
\[
\xy
(-10,8)*+{\Pi_{\eta}(A)}="0";
(10,8)*+{\Pi_{\eta}(A_2)}="2";
(-10,-8)*+{\Pi_{\eta}(A_1)}="4";
(10,-8)*+{Y}="6";
(0,0)*+{\square}="8";
{\ar^{\Pi_{\eta}(\varpi_2)} "0";"2"};
{\ar_{\Pi_{\eta}(\varpi_1)} "0";"4"};
{\ar^{\pi_2} "2";"6"};
{\ar_{\pi_1} "4";"6"};
\endxy
\]
is also a pull-back diagram. If we put $p=p_1\circ\varpi_1=p_2\circ\varpi_2$ and $\pi=\pi_1\circ\Pi_{\eta}(\varpi_1)=\pi_2\circ\Pi_{\eta}(\varpi_2)$, then we have an exponential diagram
\[
\xy
(-14,6)*+{X}="0";
(-14,-6)*+{Y}="2";
(-1,6)*+{A}="4";
(9,6)*+{}="5";
(18,4.7)*+{X\underset{Y}{\times}\Pi_{\eta}(A)}="6";
(16,2)*+{}="7";
(16,-6)*+{\Pi_{\eta}(A)}="8";
(22,-7)*+{.}="9";
(1,0)*+{\mathit{exp}}="10";
{\ar_{\eta} "0";"2"};
{\ar_{p} "4";"0"};
{\ar_>>>>>{\lambda} "5";"4"};
{\ar^>>>>{\rho} "7";"8"};
{\ar^{\pi} "8";"2"};
\endxy
\]
\end{lem}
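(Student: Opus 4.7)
The plan is to deduce both statements from the adjunction of Remark \ref{RemExp}: the functor $\Pi_\eta \colon \COC/X \to \COC/Y$ is right adjoint to the pull-back functor $X\underset{Y}{\times}- \colon \COC/Y \to \COC/X$, and so preserves all limits that exist in $\COC/X$. In particular it preserves binary products, and binary products in $\COC/X$ (resp. $\COC/Y$) are precisely fibered products over $X$ (resp. over $Y$).

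By construction, $A = A_1\underset{X}{\times}A_2$ together with the projections $\varpi_1,\varpi_2$ is the product of $(A_1\overset{p_1}{\to}X)$ and $(A_2\overset{p_2}{\to}X)$ in $\COC/X$. Applying $\Pi_\eta$ and invoking preservation of products, $\Pi_\eta(A)$ together with $\Pi_\eta(\varpi_1)$ and $\Pi_\eta(\varpi_2)$ is the product of $\Pi_\eta(A_1)$ and $\Pi_\eta(A_2)$ in $\COC/Y$, i.e.\ the pull-back square asserted in the lemma. For the exponential-diagram claim I would invoke Remark \ref{RemExp} directly for the morphism $p \in \COC(A,X)$: it produces an exponential diagram with apex $\Pi_\eta(A)$, base map some $\pi(p) \colon \Pi_\eta(A) \to Y$, and upper morphisms $\lambda, \rho$ determined by the counit of the adjunction. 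Since $\Pi_\eta(\varpi_i)$ is by construction a morphism in $\COC/Y$, one has $\pi_i\circ\Pi_\eta(\varpi_i) = \pi(p)$ for $i=1,2$, so $\pi(p)$ agrees with $\pi$ as defined in the lemma, and naturality of the counit identifies $\lambda,\rho$ with the required maps.

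If the categorical argument is felt too terse, the bijection $\Pi_\eta(A)\cong\Pi_\eta(A_1)\underset{Y}{\times}\Pi_\eta(A_2)$ can be exhibited at the level of elements: a pair $(y,\sigma)$ with $\sigma\colon\eta^{-1}(y)\to A$ a section of $p$ corresponds uniquely to $((y,\varpi_1\circ\sigma),(y,\varpi_2\circ\sigma))$, since giving a map into the fibered product $A_1\underset{X}{\times}A_2$ is the same as giving two compatible maps, and compatibility of the sections with $p_1,p_2$ is equivalent to $p\circ\sigma=\mathrm{id}$. This bijection is $G$-equivariant and sits over $Y$. The main obstacle — which the author sidesteps by referring to \cite{Tam} in the finite case — is the bookkeeping needed to verify that $p$, $\pi$ and the ancillary morphisms really lie in the prescribed subcategories $\COC$ and $\COM$, i.e.\ that finite-fiber conditions and stabilizer constraints are preserved under fibered product and under $\Pi_\eta$. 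These stabilities hold for formal reasons (pull-backs of morphisms in $\COC$ stay in $\COC$, and $\Pi_\eta$ lands in $\COC/Y$ by Remark \ref{RemExp}), but the explicit case analysis is tedious.
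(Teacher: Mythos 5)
Your argument is correct. Note that the paper gives no proof to compare against: Lemma \ref{LemA} is one of the ``diagram lemmas'' whose proofs the author omits as straightforward, referring to \cite{Tam} for the finite case. Your two routes both work and are the natural ones. The categorical route is sound provided one adds the small check you implicitly use: the fibered product in $\C$ of two objects of $\COC/Y$ (resp.\ $\COC/X$) again has finite fibers over $Y$ (resp.\ $X$) and stabilizers in $\mathcal{C}$ (closure under finite intersections), so it lies in the full subcategory $\COC/Y$ and is the categorical product there; hence ``product in $\COC/Y$'' and ``pull-back in $\C$'' agree, and preservation of products by the right adjoint $\Pi_{\eta}$, with the comparison map induced by $\Pi_{\eta}(\varpi_1),\Pi_{\eta}(\varpi_2)$, gives exactly the asserted pull-back square. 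Your identification $\pi=\pi_1\circ\Pi_{\eta}(\varpi_1)=\pi(p)$ is immediate since $\Pi_{\eta}(\varpi_i)$ is a morphism in $\COC/Y$, and then the final diagram is an exponential diagram essentially by definition, being the canonical diagram $(\ref{DiagExp1})$ for $p\in\COC(A,X)$. The element-level bijection $(y,\sigma)\mapsto\bigl((y,\varpi_1\circ\sigma),(y,\varpi_2\circ\sigma)\bigr)$ you give as a fallback is also correct (a section of $p$ over $\eta^{-1}(y)$ is the same as a compatible pair of sections of $p_1,p_2$), $G$-equivariant, over $Y$, and visibly realizes $\Pi_{\eta}(\varpi_1),\Pi_{\eta}(\varpi_2)$ as the projections; this is presumably the ``straightforward'' verification the author had in mind, so your adjunction argument is, if anything, the more conceptual packaging.
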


\begin{lem}\label{LemB}
Let
\[
\xy
(-12,6)*+{X}="0";
(-12,-6)*+{Y}="2";
(0,6)*+{A}="4";
(12,6)*+{Z}="6";
(25,4.6)*+{\cong X\underset{Y}{\times}\Pi_{\eta}(A)}="7";
(12,-6)*+{\Pi_{\eta}(A)}="8";
(0,0)*+{\mathit{exp}}="10";
{\ar_{\eta} "0";"2"};
{\ar_<<<{p} "4";"0"};
{\ar_{\lambda} "6";"4"};
{\ar^{\rho} "6";"8"};
{\ar^{\pi} "8";"2"};
\endxy
\]
be an exponential diagram. If we pull it back by a morphism $\zeta\in \C(Y\ppr,Y)$, then the obtained diagram
\[
\xy
(-12,6)*+{X\ppr}="0";
(-12,-6)*+{Y\ppr}="2";
(0,6)*+{A\ppr}="4";
(12,6)*+{Z\ppr}="6";
(12,-6)*+{(\Pi_{\eta}(A))\ppr}="8";
(0,0)*+{\mathit{exp}}="10";
{\ar_{\eta\ppr} "0";"2"};
{\ar_<<<{p\ppr} "4";"0"};
{\ar_{\lambda\ppr} "6";"4"};
{\ar^{\rho\ppr} "6";"8"};
{\ar^{\pi\ppr} "8";"2"};
\endxy
\]
is also an exponential diagram. Here, $(-)\ppr$ is the abbreviation of $-\underset{Y}{\times}Y\ppr$.
\end{lem}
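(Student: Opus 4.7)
The plan is to work directly with the set-theoretic description of $\Pi_{\eta}$ given in Remark \ref{RemExp} and to verify that the $\Pi$-construction commutes with base change along $\zeta$. First I would set up notation: write $X\ppr = X\underset{Y}{\times}Y\ppr$, $A\ppr = A\underset{Y}{\times}Y\ppr = A\underset{X}{\times}X\ppr$, and $(\Pi_{\eta}(A))\ppr = \Pi_{\eta}(A)\underset{Y}{\times}Y\ppr$. Since morphisms with finite fibers, and more specifically morphisms in $\mathcal{O}_{\mathcal{C}}$ and $\mathcal{O}_{\bullet}$, are stable under pull-back in $\C$ (the remark after Definition \ref{DefGSets}), the induced morphisms $\eta\ppr\in\COM(X\ppr,Y\ppr)$ and $p\ppr\in\COC(A\ppr,X\ppr)$ automatically lie in the correct subcategories.

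The heart of the argument is to produce a canonical $G$-equivariant bijection
\[ \Phi\colon (\Pi_{\eta}(A))\ppr\overset{\cong}{\longrightarrow}\Pi_{\eta\ppr}(A\ppr). \]
On points, an element of $(\Pi_{\eta}(A))\ppr$ is a triple $((y,\sigma),y\ppr)$ with $\zeta(y\ppr)=y$ and $\sigma\colon\eta^{-1}(y)\to A$ a section of $p$. The projection $\zeta_X\colon X\ppr\to X$ restricts to a bijection $(\eta\ppr)^{-1}(y\ppr)\overset{\cong}{\to}\eta^{-1}(y)$, and pulling $\sigma$ back along this bijection, while simultaneously pairing with the tautological map $(\eta\ppr)^{-1}(y\ppr)\hookrightarrow X\ppr$, produces a section $\sigma\ppr\colon(\eta\ppr)^{-1}(y\ppr)\to A\ppr$ of $p\ppr$. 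Setting $\Phi((y,\sigma),y\ppr)=(y\ppr,\sigma\ppr)$ gives the desired map. The inverse runs in the obvious direction: a section of $p\ppr$ descends via the pull-back square defining $A\ppr$ to a section of $p$ over $\eta^{-1}(\zeta(y\ppr))$. Compatibility of $\Phi$ with the projections $\pi\ppr$ and $\pi(p\ppr)$ to $Y\ppr$, as well as $G$-equivariance, is immediate from the construction.

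With $\Phi$ in hand, it remains to match the remaining two arrows in the pulled-back square with those in the exponential diagram attached to $\eta\ppr$ and $p\ppr$. The map $\rho\ppr$ obtained by pulling back $\rho$ is, through $\Phi$, precisely the pull-back of $\eta\ppr$ along $\pi(p\ppr)$. The evaluation map $\lambda\ppr$, which is the only non-tautological piece of the exponential data, is identified using the universal property of $A\ppr = A\underset{X}{\times}X\ppr$: given a point $(x\ppr,(y\ppr,\sigma\ppr))$ of the pulled-back $Z\ppr$ with $\eta\ppr(x\ppr)=y\ppr$, it returns $(\sigma(\zeta_X(x\ppr)),x\ppr)\in A\ppr$, which is exactly $\sigma\ppr(x\ppr)$, i.e.\ the tautological evaluation for $\Pi_{\eta\ppr}(A\ppr)$.

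The principal obstacle is bookkeeping: three overlapping pull-back squares (defining $X\ppr$, $A\ppr$, $(\Pi_{\eta}(A))\ppr$) must be kept in play simultaneously, and the evaluation map in particular must be transported through $\Phi$ with care. Conceptually, this is nothing more than the Beck--Chevalley compatibility for the adjunction $X\underset{Y}{\times}-\dashv\Pi_{\eta}$ recorded in Remark \ref{RemExp}, so no deeper categorical input is required beyond the explicit point-set formula for $\Pi_{\eta}$.
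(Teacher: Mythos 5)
Your proposal is correct: the explicit base-change bijection $\Phi\colon(\Pi_{\eta}(A))\ppr\overset{\cong}{\to}\Pi_{\eta\ppr}(A\ppr)$, together with the identifications of $\pi\ppr$, $\rho\ppr$ and the evaluation $\lambda\ppr$ and the observation that $\eta\ppr$, $p\ppr$ stay in $\COM$ resp.\ $\COC$, is exactly the straightforward point-set verification the paper has in mind when it omits the proof (referring to \cite{Tam} for the finite case). Nothing is missing, and the Beck--Chevalley framing is accurate since the canonical comparison map is precisely the $\Phi$ you check to be an isomorphism.
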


\begin{lem}\label{LemC}
Let $(\ref{DiagExp2})$ be an exponential diagram, and let
\[
\xy
(-6,6)*+{A}="0";
(6,6)*+{A\ppr}="2";
(-6,-6)*+{Z}="4";
(6,-6)*+{X\ppr}="6";
(0,0)*+{\square}="8";
{\ar_{\zeta\ppr} "2";"0"};
{\ar_{p} "0";"4"};
{\ar^{p\ppr} "2";"6"};
{\ar^{\zeta} "6";"4"};
\endxy
\]
be a pull-back diagram with $p\in\COC(A,Z)$.
If we let
\[
\xy
(-14,6)*+{X\ppr}="0";
(-14,-6)*+{Y\ppr}="2";
(-1,6)*+{A\ppr}="4";
(7,6)*+{}="5";
(18,4.7)*+{X\ppr\underset{Y\ppr}{\times}\Pi_{\eta\ppr}(A\ppr)}="6";
(16,2)*+{}="7";
(16,-6)*+{\Pi_{\eta\ppr}(A\ppr)}="8";
(1,0)*+{\mathit{exp}}="10";
{\ar_{\eta\ppr} "0";"2"};
{\ar_<<<{p\ppr} "4";"0"};
{\ar_<<<{\lambda\ppr} "5";"4"};
{\ar^>>>>{\rho\ppr} "7";"8"};
{\ar^{\pi\ppr} "8";"2"};
\endxy
\]
be the exponential diagram, then
\[
\xy
(-14,6)*+{X}="0";
(-14,-6)*+{Y}="2";
(-1,6)*+{A}="4";
(7,6)*+{}="5";
(18,4.7)*+{X\ppr\underset{Y\ppr}{\times}\Pi_{\eta\ppr}(A\ppr)}="6";
(16,2)*+{}="7";
(16,-6)*+{\Pi_{\eta\ppr}(A\ppr)}="8";
(1,0)*+{\mathit{exp}}="10";
{\ar_{\eta} "0";"2"};
{\ar_<<<<{\xi\circ p} "4";"0"};
{\ar_<<<{\zeta\ppr\circ\lambda\ppr} "5";"4"};
{\ar^>>>>{\rho\ppr} "7";"8"};
{\ar^{\upsilon\circ\pi\ppr} "8";"2"};
\endxy
\]
is also an exponential diagram.
\end{lem}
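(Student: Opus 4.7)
The plan is to show that the diagram displayed in the conclusion is isomorphic, as a pentagon in $\C$, to the canonical exponential diagram $(\ref{DiagExp1})$ for the morphism $\xi\circ p\colon A\rightarrow X$ over $\eta\colon X\rightarrow Y$. Since $(\ref{DiagExp2})$ is, by assumption, an exponential diagram arising (up to canonical isomorphism) from $\xi\colon Z\rightarrow X$ in $\COC/X$, we may identify $Y'=\Pi_{\eta}(Z)$, $X'=X\underset{Y}{\times}Y'$, with $\eta'$ the projection, $\upsilon$ the canonical map $(y,\tau)\mapsto y$, and $\zeta\colon X'\rightarrow Z$ the counit $(x,(y,\tau))\mapsto\tau(x)$.

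The heart of the argument is to construct a bijection
\[
\Phi\colon\Pi_{\eta}\bigl(A\overset{\xi\circ p}{\rightarrow}X\bigr)\longrightarrow\Pi_{\eta'}(A')
\]
which, viewed as a map of objects of $\C$ over $Y$ (the right-hand side via $\upsilon\circ\pi'$), is an isomorphism. Given $(y,\sigma)$ on the left, with $\sigma\colon\eta^{-1}(y)\rightarrow A$ a section of $\xi\circ p$, put $\tau=p\circ\sigma$; then $\xi\circ\tau=\mathrm{id}$, so $(y,\tau)\in\Pi_{\eta}(Z)=Y'$. Under the canonical identification $(\eta')^{-1}(y,\tau)\cong\eta^{-1}(y)$, define $\sigma'(x)=((x,(y,\tau)),\sigma(x))\in A'=X'\underset{Z}{\times}A$, which lands in $A'$ because $\zeta(x,(y,\tau))=\tau(x)=p(\sigma(x))$. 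Set $\Phi(y,\sigma)=((y,\tau),\sigma')\in\Pi_{\eta'}(A')$. The inverse simply reads off $\sigma$ as the $A$-component of $\sigma'$; $G$-equivariance and compatibility with the projections to $Y$ are immediate from the definitions.

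Next one transports the remaining pentagon edges through $\Phi$. From $X'=X\underset{Y}{\times}Y'$ one obtains a canonical identification $X\underset{Y}{\times}\Pi_{\eta'}(A')\cong X'\underset{Y'}{\times}\Pi_{\eta'}(A')$. Under this and $\Phi$, the canonical counit $\lambda\colon(x,(y,\sigma))\mapsto\sigma(x)$ of the exponential diagram for $\xi\circ p$ corresponds to $\zeta'\circ\lambda'$: indeed $\lambda'(x',(y',\sigma'))=\sigma'(x')\in A'$, and its $A$-component (i.e.\ $\zeta'$ applied) is precisely $\sigma(x)$. Analogously $\rho$ corresponds to $\rho'$ and $\pi$ to $\upsilon\circ\pi'$, completing the comparison of pentagons.

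The substance of the proof is geometric: a section of $\xi\circ p$ over $\eta^{-1}(y)$ is the same datum as a section $\tau$ of $\xi$ over $\eta^{-1}(y)$ together with a section of $p$ over $\tau(\eta^{-1}(y))$. The only real obstacle is bookkeeping the several pullbacks involved, which is presumably why the proof is described as straightforward. A coordinate-free alternative chains the adjunctions $\eta^{\ast}\dashv\Pi_{\eta}$ and $(\eta')^{\ast}\dashv\Pi_{\eta'}$ with the universal property of $A'$ to exhibit a natural bijection $\COC/X(\eta^{\ast}T,A\overset{\xi\circ p}{\rightarrow}X)\cong\COC/Y(T,\Pi_{\eta'}(A'))$ for varying $T\in\COC/Y$, and concludes by Yoneda.
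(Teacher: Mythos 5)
Your proof is correct, and it supplies exactly the verification the paper omits (the paper declares Lemmas \ref{LemA}--\ref{LemC} straightforward and refers to Tambara for the finite case): reduce to the canonical exponential diagram generated by $\xi\in\COC(Z,X)$, and check fibrewise that a section of $\xi\circ p$ over $\eta^{-1}(y)$ is the same datum as a point $(y,\tau)\in\Pi_{\eta}(Z)=Y\ppr$ together with a section of $p\ppr$ over $(\eta\ppr)^{-1}(y,\tau)$, which is precisely your bijection $\Phi$, and then transport $\lambda,\rho,\pi$ as you do. Only your closing ``coordinate-free alternative'' is oversimplified as stated --- morphisms over $Y$ into $\Pi_{\eta\ppr}(A\ppr)$ (structured by $\upsilon\circ\pi\ppr$) do not arise from merely chaining the two adjunctions; one must first decompose such a morphism according to the induced map $T\rightarrow Y\ppr$ and use $A\ppr=\zeta^{\ast}A$ --- but since this is offered only as an aside, the main argument stands.
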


\subsection{Tambarization of a Mackey functor}
Fix a Tambara system $\TS$.
In this section, we associate a Tambara functor to any Mackey functor. In fact, we construct a functor $\mathcal{S}$ from $\SMack$ to $\STam$ in Theorem \ref{Thm1}.

As a consequence, we obtain the following functors.
\begin{eqnarray*}
\gamma\circ \mathcal{S}\colon\SMack&\rightarrow&\Tam\\
\Mack&\rightarrow&\STam\\
\Mack&\rightarrow&\Tam
\end{eqnarray*}

\begin{dfn}\label{DefSM}
Let $M$ be a semi-Mackey functor on $(\mathcal{C},\mathcal{O}_{\bullet})$.
For any $X\in\Ob(\C)$, define category $M\HG/X$ by the following.
\begin{enumerate}
\item[{\rm (a)}] An object in $M\HG/X$ is a pair $\Am$ of $p\in\COC(A,X)$ and $m_A\in M(A)$.
\item[{\rm (b)}] A morphism from $\Amo$ to $\Amt$ is a morphism $f\in\C(A_1,A_2)$, such that $p_2\circ f=p_1$ and $M^{\ast}(f)(m_{A_2})=m_{A_1}$.
\end{enumerate}
\end{dfn}

For any two objects $\Amo$ and $\Amt$ in $M\HG/X$, define their sum and product as follows.
\begin{enumerate}
\item[{\rm (i)}] $\Amo\amalg\Amt=(A_1\amalg A_2\overset{p_1\cup p_2}{\longrightarrow}X,m_{A_1}\amalg m_{A_2})$,
\item[{\rm (ii)}] $\Amo\times\Amt=(A_1\underset{X}{\times}A_2\overset{p}{\rightarrow}X,m_{A_1}\star m_{A_2})$.
\end{enumerate}
Here, $m_{A_1}\amalg m_{A_2}\in M(A_1\amalg A_2)$ is the pull-back of $(m_{A_1},m_{A_2})\in M(A_1)\times M(A_2)$ by the isomorphism
\[ (M^{\ast}(\iota_1),M^{\ast}(\iota_2))\colon M(A_1\amalg A_2)\overset{\cong}{\rightarrow}M(A_1)\times M(A_2), \]
where $\iota_1\colon A_1\hookrightarrow A_1\amalg A_2$ and $\iota_2\colon A_2\hookrightarrow A_1\amalg A_2$ are the inclusions.
$m_{A_1}\star m_{A_2}$ is the product of $M^{\ast}(\varpi_1)(m_{A_1})$ and $M^{\ast}(\varpi_2)(m_{A_2})$ in $M(A_1\underset{X}{\times}A_2)$, where $\varpi_1$ and $\varpi_2$ are the projections in the following pull-back diagram, and $p=p_1\circ\varpi_1=p_2\circ\varpi_2$.
\begin{eqnarray*}
\xy
(-8,3.4)*+{}="-1";
(-9.5,5.8)*+{A_1\underset{X}{\times}A_2}="0";
(-2,7)*+{}="1";
(8,7)*+{A_2}="2";
(-8,-7)*+{A_1}="4";
(8,-7)*+{X}="6";
(0,0)*+{\square}="8";
{\ar^>>>>>{\varpi_2} "1";"2"};
{\ar_>>>>>{\varpi_1} "-1";"4"};
{\ar^{p_2} "2";"6"};
{\ar_{p_1} "4";"6"};
\endxy
\qquad\  p=p_1\circ\varpi_1=p_2\circ\varpi_2\\
m_{A_1}\star m_{A_2}=M^{\ast}(\varpi_1)(m_{A_1})\cdot M^{\ast}(\varpi_2)(m_{A_2})
\end{eqnarray*}

The isomorphism classes of objects in $M\HG/X$ forms a semi-ring with these sums and products, which we denote by $\mathcal{S}_M(X)$.

\begin{prop}\label{PropForThm1}
$\mathcal{S}_M$ carries a natural structure of a semi-Tambara functor.
\end{prop}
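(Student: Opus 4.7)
I would first define the three structure morphisms of $\mathcal{S}_M$ on representatives, then verify well-definedness on isomorphism classes, functoriality, the two Mackey conditions, additivity, the semi-ring homomorphism property of $f^*$, and finally the exponential diagram axiom. For $f\in\C(Y,X)$, set
\[ f^*(A\overset{p}{\to}X,m_A)=(A\underset{X}{\times}Y\overset{p\ppr}{\to}Y,\,M^*(\tilde f)(m_A)), \]
with $\tilde f\colon A\underset{X}{\times}Y\to A$ the projection. For $f\in\COC(X,Y)$, let the additive transfer be plain postcomposition
\[ f_+(A\overset{p}{\to}X,m_A)=(A\overset{f\circ p}{\to}Y,m_A). \]
For $f\in\COM(X,Y)$, the exponential diagram of Remark \ref{RemExp} (applied to $p$ along $\eta=f$) defines the multiplicative transfer
\[ f_\bullet(A\overset{p}{\to}X,m_A)=(\Pi_f(A)\overset{\pi}{\to}Y,\,M_\bullet(\rho)M^*(\lambda)(m_A)). \]
Well-definedness on isomorphism classes follows from the functoriality of pullback, of $\Pi_f$, and of $M^*$ and $M_\bullet$.

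Functoriality of $f^*$ and $f_+$ is immediate; functoriality of $f_\bullet$ uses the canonical isomorphism $\Pi_{g\circ f}\cong\Pi_g\circ\Pi_f$ arising from composition of the right adjoints of Remark \ref{RemExp}, combined with functoriality of $M^*$ and $M_\bullet$. The Mackey condition for $(\mathcal{S}_M^*,\mathcal{S}_{M,+})$ is automatic since postcomposition commutes with pullback by the universal property and $m_A$ is unchanged. For $(\mathcal{S}_M^*,\mathcal{S}_{M,\bullet})$, Lemma \ref{LemB} provides what is needed: pulling back an exponential diagram yields another exponential diagram, so the two routes produce canonically isomorphic objects in $\COC/Y\ppr$, and the equality of $M$-elements reduces to the Mackey condition of $M$ on the pullback squares around $\lambda$ and $\rho$. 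Additivity (M2) is clear from the equivalence $\COC/X\simeq\prod_\lambda\COC/X_\lambda$ under $X=\coprod_\lambda X_\lambda$ together with additivity of $M$. The semi-ring homomorphism property of $f^*$ follows because pullback preserves coproducts and fiber products and $M^*(\tilde f)$ is a monoid homomorphism. The compatibility of $f_\bullet$ with the multiplication on $\mathcal{S}_M$ uses Lemma \ref{LemA}, which gives the isomorphism $\Pi_f(A_1\underset{X}{\times}A_2)\cong\Pi_f(A_1)\underset{Y}{\times}\Pi_f(A_2)$, combined with the Mackey condition of $M$ on the pullback squares relating the respective $\rho$'s.

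The main obstacle is the exponential axiom \textup{(ii)} of Definition \ref{DefTamFtr}. For an exponential diagram \eqref{DiagExp2} and an element $(A\overset{p}{\to}Z,m_A)\in\mathcal{S}_M(Z)$, I would unwind both sides of $\upsilon_+\circ\eta\ppr_\bullet\circ\zeta^*=\eta_\bullet\circ\xi_+$. The left side produces $\Pi_{\eta\ppr}(A\underset{Z}{\times}X\ppr)$ postcomposed with $\upsilon$, while the right side produces $\Pi_\eta(A)$ from $\xi\circ p\colon A\to X$. Lemma \ref{LemC}, applied to the pullback of $A$ along $\zeta$, identifies these two exponential diagrams canonically: $\upsilon\circ\pi\ppr$ plays the role of $\pi$, $\rho\ppr$ that of $\rho$, and $\zeta\ppr\circ\lambda\ppr$ that of $\lambda$. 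This identifies the underlying $G$-sets in $\COC/Y$, and the equality $M_\bullet(\rho\ppr)M^*(\zeta\ppr\circ\lambda\ppr)(m_A)=M_\bullet(\rho)M^*(\lambda)(m_A)$ of the $M$-elements follows from functoriality of $M^*$ and from the Mackey condition of $M$ applied to the iso-pullback square relating $\rho$ and $\rho\ppr$ under these identifications. Together with the verifications of the previous paragraph, this establishes that $\mathcal{S}_M$ is a semi-Tambara functor.
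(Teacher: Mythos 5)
Your proposal is correct and follows essentially the same route as the paper: the same definitions of $\zeta^{\ast}$, $\xi_+$, $\eta_{\bullet}$ on representatives, Lemma \ref{LemA} for multiplicativity of $\eta_{\bullet}$, Lemma \ref{LemB} for the Mackey condition of the multiplicative pair, and Lemma \ref{LemC} for the exponential axiom, with the remaining checks (well-definedness, functoriality, additivity, semi-ring property of $\zeta^{\ast}$) handled as routine, just as in the paper. The only cosmetic difference is that in the exponential axiom you invoke the Mackey condition on the identifying isomorphism, whereas the paper needs only contravariant functoriality $M^{\ast}(\zeta\ppr\circ\lambda\ppr)=M^{\ast}(\lambda\ppr)M^{\ast}(\zeta\ppr)$ once Lemma \ref{LemC} exhibits the second diagram as the exponential diagram itself; both readings are valid.
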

\begin{proof}
First we construct structure morphisms for $\mathcal{S}_M$. Let $X,Y$ be any pair of objects in $\C$, and let $\Am\in \mathcal{S}_M(X)$ be any element.
\begin{enumerate}
\item For any $\zeta\in\C(Y,X)$, define $\zeta^{\ast}\colon \mathcal{S}_M(X)\rightarrow \mathcal{S}_M(Y)$
by
\[ \zeta^{\ast}\Am=(A\ppr\overset{p\ppr}{\rightarrow}Y,M^{\ast}(\zeta\ppr)(m_A)), \]
where
\begin{equation}
\xy
(-6,6)*+{A\ppr}="0";
(6,6)*+{Y}="2";
(-6,-6)*+{A}="4";
(6,-6)*+{X}="6";
(0,0)*+{\square}="8";
{\ar^{p\ppr} "0";"2"};
{\ar_{\zeta\ppr} "0";"4"};
{\ar^{\zeta} "2";"6"};
{\ar_{p} "4";"6"};
\endxy
\label{Diag+1}
\end{equation}
is a pull-back diagram.
\item For any $\xi\in\COC(X,Y)$, define $\xi_+\colon \mathcal{S}_M(X)\rightarrow \mathcal{S}_M(Y)$ by
\[ \xi_+\Am=(A\overset{\xi\circ p}{\longrightarrow}Y,m_A). \]
\item For any $\eta\in\COM(X,Y)$, define $\eta_{\bullet}\colon \mathcal{S}_M(X)\rightarrow \mathcal{S}_M(Y)$ by
\[ \eta_{\bullet}\Am=(Y\ppr\overset{\upsilon}{\longrightarrow}Y,M_{\ast}(\rho)M^{\ast}(\lambda)(m_A)), \]
where
\begin{equation}
\xy
(-12,6)*+{X}="0";
(-12,-6)*+{Y}="2";
(0,6)*+{A}="4";
(12,6)*+{X^{\prime}}="6";
(12,-6)*+{Y^{\prime}}="8";
(0,0)*+{\mathit{exp}}="10";
{\ar_{\eta} "0";"2"};
{\ar_{p} "4";"0"};
{\ar_{\lambda} "6";"4"};
{\ar^{\rho} "6";"8"};
{\ar^{\upsilon} "8";"2"};
\endxy
\label{Diag+2}
\end{equation}
is an exponential diagram.
\end{enumerate}

We only demonstrate the following. The other conditions can be shown easily.
\begin{enumerate}
\item[{\rm (i)}] $\xi_+$ is an additive homomorphism for any $\xi\in\COC(X,Y)$.
\item[{\rm (ii)}] $\eta_{\bullet}$ is a multiplicative homomorphism for any $\eta\in\COM(X,Y)$.
\item[{\rm (iii)}] $\zeta^{\ast}$ is a semi-ring homomorphism for any $\zeta\in\C(Y,X)$.
\item[{\rm (iv)}] $\mathcal{S}_M^{\ast}$ is additive, namely, for any $X,Y\in\Ob(\C)$,
\[ (\iota_X^{\ast},\iota_Y^{\ast})\colon \mathcal{S}_M(X\amalg Y)\rightarrow \mathcal{S}_M(X)\times \mathcal{S}_M(Y) \]
is an isomorphism, where $\iota_X$, $\iota_Y$ are the inclusions.
\item[{\rm (v)}] For any pull-back diagram in $\C$
\[
\xy
(-6,6)*+{X\ppr}="0";
(6,6)*+{X}="2";
(-6,-6)*+{Y\ppr}="4";
(6,-6)*+{Y}="6";
(0,0)*+{\square}="8";
{\ar^{\zeta\ppr} "0";"2"};
{\ar_{\xi\ppr} "0";"4"};
{\ar^{\xi} "2";"6"};
{\ar_{\zeta} "4";"6"};
\endxy
\]
with $\xi\in\COC(X,Y)$, we have $\zeta^{\ast}\circ\xi_+=\xi\ppr_+\circ\zeta^{\prime\ast}$.
\item[{\rm (vi)}] For any pull-back diagram in $\C$
\[
\xy
(-6,6)*+{X\ppr}="0";
(6,6)*+{X}="2";
(-6,-6)*+{Y\ppr}="4";
(6,-6)*+{Y}="6";
(0,0)*+{\square}="8";
{\ar^{\zeta\ppr} "0";"2"};
{\ar_{\eta\ppr} "0";"4"};
{\ar^{\eta} "2";"6"};
{\ar_{\zeta} "4";"6"};
\endxy
\]
with $\eta\in\COM(X,Y)$, we have $\zeta^{\ast}\circ\eta_{\bullet}=\eta\ppr_{\bullet}\circ\zeta^{\prime\ast}$.
\item[{\rm (vii)}] For any exponential diagram $(\ref{DiagExp2})$, we have
\[ \upsilon_+\circ\eta\ppr_{\bullet}\circ\zeta^{\ast}=\eta_{\bullet}\circ\xi_+. \]
\end{enumerate}

\smallskip

\noindent\underline{Confirmation of {\rm (i)}, {\rm (ii)}, {\rm (iii)}}

\smallskip

Let $\Amo$ and $\Amt$ be any pair of elements in $\mathcal{S}_M(X)$.
{\rm (i)} is trivially satisfied, since
\begin{eqnarray*}
\xi_+(\Amo\amalg\Amt)&=&(A_1\amalg A_2\overset{\xi\circ(p_1\cup p_2)}{\longrightarrow}X,m_{A_1}\amalg m_{A_2})\\
&=&\xi_+\Amo\amalg\xi_+\Amt.
\end{eqnarray*}
To show {\rm (iii)}, take the following pull-backs.
\begin{eqnarray*}
\xy
(-6,6)*+{A_i\ppr}="0";
(6,6)*+{A_i}="2";
(-6,-6)*+{Y}="4";
(6,-6)*+{X}="6";
(0,0)*+{\square}="8";
{\ar^{\zeta_i} "0";"2"};
{\ar_{p_i\ppr} "0";"4"};
{\ar^{p_i} "2";"6"};
{\ar_{\zeta} "4";"6"};
\endxy&
\xy
(-6,6)*+{A}="0";
(6,6)*+{A_2}="2";
(-6,-6)*+{A_1}="4";
(6,-6)*+{X}="6";
(0,0)*+{\square}="8";
{\ar^{\varpi_2} "0";"2"};
{\ar_{\varpi_1} "0";"4"};
{\ar^{p_2} "2";"6"};
{\ar_{p_1} "4";"6"};
\endxy&
\xy
(-6,6)*+{A\ppr}="0";
(6,6)*+{A}="2";
(-6,-6)*+{Y}="4";
(6,-6)*+{X}="6";
(0,0)*+{\square}="8";
{\ar^{\zeta\ppr} "0";"2"};
{\ar_{p\ppr} "0";"4"};
{\ar^{p} "2";"6"};
{\ar_{\zeta} "4";"6"};
\endxy
\\
(i=1,2)\ \ \ &
p=p_1\circ\varpi_1=p_2\circ\varpi_2&
\ \ 
\end{eqnarray*}
From these, we obtain the following pull-back diagrams.
\[
\xy
(-6,6)*+{A\ppr}="0";
(6,6)*+{A_i\ppr}="2";
(-6,-6)*+{A}="4";
(6,-6)*+{A_i}="6";
(0,0)*+{\square}="8";
{\ar^{\varpi_i\ppr} "0";"2"};
{\ar_{\zeta\ppr} "0";"4"};
{\ar^{\zeta_i} "2";"6"};
{\ar_{\varpi_i} "4";"6"};
\endxy
\ \ (i=1,2),
\ \ \ \ 
\xy
(-6,6)*+{A\ppr}="0";
(6,6)*+{A_2\ppr}="2";
(-6,-6)*+{A_1\ppr}="4";
(6,-6)*+{Y}="6";
(0,0)*+{\square}="8";
{\ar^{\varpi_2\ppr} "0";"2"};
{\ar_{\varpi_1\ppr} "0";"4"};
{\ar^{p_1\ppr} "2";"6"};
{\ar_{p_2\ppr} "4";"6"};
\endxy
\]

Thus we have
\begin{eqnarray*}
&&\hspace{-2cm}\zeta^{\ast}(\Amo\times\Amt)\\
&=&\zeta^{\ast}(A\overset{p}{\rightarrow}X,M^{\ast}(\varpi_1)(m_{A_1})\cdot M^{\ast}(\varpi_2)(m_{A_2}))\\
&=&(A\ppr\overset{p\ppr}{\rightarrow}Y,M^{\ast}(\zeta\ppr)M^{\ast}(\varpi_1)(m_{A_1})\cdot M^{\ast}(\zeta\ppr)M^{\ast}(\varpi_2)(m_{A_2}))\\
&=&(A\ppr\overset{p\ppr}{\rightarrow}Y,M^{\ast}(\varpi\ppr_1)M^{\ast}(\zeta_1)(m_{A_1})\cdot M^{\ast}(\varpi\ppr_2)M^{\ast}(\zeta_2)(m_{A_2})\\
&=&\zeta^{\ast}\Amo\times\zeta^{\ast}\Amt.
\end{eqnarray*}
Moreover, since
\[
\xy
(-10,6)*+{A_1\ppr\amalg A_2\ppr}="0";
(10,6)*+{Y}="2";
(-10,-6)*+{A_1\amalg A_2}="4";
(10,-6)*+{X}="6";
(0,0)*+{\square}="8";
{\ar^>>>>>>{p_1\ppr\cup p_2\ppr} "0";"2"};
{\ar_{\zeta_1\amalg\zeta_2} "0";"4"};
{\ar^{\zeta} "2";"6"};
{\ar_>>>>>>{p_1\cup p_2} "4";"6"};
\endxy
\]
is a pull-back diagram, we have
\begin{eqnarray*}
&&\hspace{-2cm}\zeta^{\ast}(\Amo\amalg\Amt)\\
&=&(A\ppr_1\amalg A\ppr_2\overset{p\ppr_1\cup p\ppr_2}{\longrightarrow}Y,M^{\ast}(\zeta_1\amalg\zeta_2)(m_{A_1}\amalg m_{A_2}))\\
&=&(A\ppr_1\amalg A\ppr_2\overset{p\ppr_1\cup p\ppr_2}{\longrightarrow}Y,M^{\ast}(\zeta_1)(m_{A_1})\amalg M^{\ast}(\zeta_2)(m_{A_2}))\\
&=&\zeta^{\ast}\Amo\amalg\zeta^{\ast}\Amt.
\end{eqnarray*}

To show {\rm (ii)}, we use the notation in Lemma \ref{LemA}.
For any pair of elements $\Amo$ and $\Amt$ in $S_M(X)$, we have
\begin{eqnarray*}
&&\hspace{-2cm}\eta_{\bullet}(\Amo\cdot\Amt)\\
&=&\eta_{\bullet}(A\overset{p}{\rightarrow}X,M^{\ast}(\varpi_1)(m_{A_1})\cdot M^{\ast}(\varpi_2)(m_{A_2}))\\
&=&(\Pi_{\eta}(A)\overset{\pi}{\rightarrow}Y,M_{\ast}(\rho)M^{\ast}(\lambda)(M^{\ast}(\varpi_1)(m_{A_1})\cdot M^{\ast}(\varpi_2)(m_{A_2}))),
\end{eqnarray*}
\begin{eqnarray*}
&&\hspace{-1cm}\eta_{\bullet}\Amo\cdot\eta_{\bullet}\Amt\\
&=&(\Pi_{\eta}(A_1)\overset{\pi_1}{\rightarrow}Y,M_{\ast}(\rho_1)M^{\ast}(\lambda_1)(m_{A_1}))\cdot(\Pi_{\eta}(A_2)\overset{\pi_2}{\rightarrow}Y,M_{\ast}(\rho_2)M^{\ast}(\lambda_2)(m_{A_2})).
\end{eqnarray*}
Since we have
\[ M_{\ast}(\rho)M^{\ast}(\lambda)M^{\ast}(\varpi_i)=M^{\ast}(\Pi_{\eta}(\varpi_i))M_{\ast}(\rho_i)M^{\ast}(\lambda_i) \]
\[
\xy
(-28,7)*+{A_1\times_XA_2}="1";
(0,7)*+{X\times_Y\Pi_{\eta}(A)}="2";
(28,7)*+{\Pi_{\eta}(A)}="3";
(-28,-7)*+{A_i}="11";
(0,-7)*+{X\times_Y\Pi_{\eta}(A_i)}="12";
(28,-7)*+{\Pi_{\eta}(A_i)}="13";
(14,0)*+{\square}="10";
{\ar_{\lambda} "2";"1"};
{\ar^{\rho} "2";"3"};
{\ar_{\varpi_i} "1";"11"};
{\ar|*+{_{1_X\times_Y\Pi_{\eta}(\varpi_i)}} "2";"12"};
{\ar^{\Pi_{\eta}(\varpi_i)} "3";"13"};
{\ar^{\lambda_i} "12";"11"};
{\ar_{\rho_i} "12";"13"};
{\ar@{}|\circlearrowright "1";"12"};
\endxy
\]
for $i=1,2$, we obtain
\[ \eta_{\bullet}(\Amo\cdot\Amt)=\eta_{\bullet}\Amo\cdot\eta_{\bullet}\Amt. \]

\smallskip

\noindent\underline{Confirmation of {\rm (iv)}}

\smallskip

To each $(A\overset{p}{\rightarrow}X,m_A)\in \mathcal{S}_M(X)$ and $(B\overset{q}{\rightarrow}Y,m_B)\in \mathcal{S}_M(Y)$, associate
\[ (A\amalg B\overset{p\amalg q}{\longrightarrow}X\amalg Y,M_{\ast}(\iota_A)(m_A)\cdot M_{\ast}(\iota_B)(m_B))\in\mathcal{S}_M(X\amalg Y), \]
where $\iota_A\colon A\hookrightarrow A\amalg B$ and $\iota_B\colon B\hookrightarrow A\amalg B$ are the inclusions. This gives the inverse of $(\iota_X^{\ast},\iota_Y^{\ast})$.

\smallskip

\noindent\underline{Confirmation of {\rm (v)}}

\smallskip

Let $\Am$ be any element in $\mathcal{S}_M(X)$. If we take pull-backs
\[
\xy
(-12,6)*+{A\ppr}="1";
(0,6)*+{X\ppr}="2";
(12,6)*+{Y\ppr}="3";
(-12,-6)*+{A}="11";
(0,-6)*+{X}="12";
(12,-6)*+{Y}="13";
(14,-7)*+{,}="14";
(-6,0)*+{\square}="0";
(6,0)*+{\square}="10";
{\ar^{p\ppr} "1";"2"};
{\ar^{\xi\ppr} "2";"3"};
{\ar_{\zeta^{\prime\prime}} "1";"11"};
{\ar_{\zeta\ppr} "2";"12"};
{\ar^{\zeta} "3";"13"};
{\ar_{p} "11";"12"};
{\ar_{\xi} "12";"13"};
\endxy
\]
then we have
\begin{eqnarray*}
\zeta^{\ast}\xi_+\Am&=&(A\ppr\overset{\xi\ppr\circ p\ppr}{\longrightarrow}Y\ppr,M^{\ast}(\zeta^{\prime\prime})(m_A))\\
&=&\xi\ppr_+\zeta^{\prime\ast}\Am.
\end{eqnarray*}

\smallskip

\noindent\underline{Confirmation of {\rm (vi)}}

\smallskip

We use the notation in Lemma \ref{LemB}. Let $\zeta^{\prime\prime}\in\C(A\ppr,A)$ be the pull-back of $\zeta\ppr$ (or $\zeta$).
\[
\xy
(-12,6)*+{A\ppr}="1";
(0,6)*+{X\ppr}="2";
(12,6)*+{Y\ppr}="3";
(-12,-6)*+{A}="11";
(0,-6)*+{X}="12";
(12,-6)*+{Y}="13";
(-6,0)*+{\square}="0";
(6,0)*+{\square}="10";
{\ar^{p\ppr} "1";"2"};
{\ar^{\eta\ppr} "2";"3"};
{\ar_{\zeta^{\prime\prime}} "1";"11"};
{\ar_{\zeta\ppr} "2";"12"};
{\ar^{\zeta} "3";"13"};
{\ar_{p} "11";"12"};
{\ar_{\eta} "12";"13"};
\endxy
\]
If we let $\mathrm{pr}_Z$ and $\mathrm{pr}_{\Pi}$ be the canonical projections induced from $\zeta$
\[
\xy
(-16,7)*+{Z\ppr}="1";
(0,7)*+{(\Pi_{\eta}(A))\ppr}="2";
(16,7)*+{Y\ppr}="3";
(-16,-7)*+{Z}="11";
(0,-7)*+{\Pi_{\eta}(A)}="12";
(16,-7)*+{Y}="13";
(-8,0)*+{\square}="0";
(8,0)*+{\square}="10";
{\ar^>>>{\rho\ppr} "1";"2"};
{\ar^>>>{\pi\ppr} "2";"3"};
{\ar_{\mathrm{pr}_Z} "1";"11"};
{\ar_{\mathrm{pr}_{\Pi}} "2";"12"};
{\ar^{\zeta} "3";"13"};
{\ar_<<<<<{\rho} "11";"12"};
{\ar_<<<<<{\eta} "12";"13"};
\endxy
\]
then
\[
\xy
(-6,6)*+{Z\ppr}="0";
(6,6)*+{A\ppr}="2";
(-6,-6)*+{Z}="4";
(6,-6)*+{A}="6";
{\ar^{\lambda\ppr} "0";"2"};
{\ar_{\mathrm{pr}_Z} "0";"4"};
{\ar^{\zeta^{\prime\prime}} "2";"6"};
{\ar_{\lambda} "4";"6"};
{\ar@{}|\circlearrowright "0";"6"};
\endxy
\]
is commutative.
By Lemma \ref{LemB}, we have
\begin{eqnarray*}
\eta\ppr_{\bullet}\zeta^{\prime\ast}\Am&=&((\Pi_{\eta}(A))\ppr\overset{\pi\ppr}{\rightarrow}Y\ppr,M_{\ast}(\rho\ppr)M^{\ast}(\lambda^{\prime})M^{\ast}(\zeta^{\prime\prime})(m_A)),\\
\zeta^{\ast}\eta_{\bullet}\Am&=&((\Pi_{\eta}(A))\ppr\overset{\pi\ppr}{\rightarrow}Y\ppr,M^{\ast}(\mathrm{pr}_{\Pi})M_{\ast}(\rho)M^{\ast}(\lambda)(m_A)).
\end{eqnarray*}
Since
\begin{eqnarray*}
M_{\ast}(\rho\ppr)M^{\ast}(\lambda\ppr)M^{\ast}(\zeta^{\prime\prime})&=&M_{\ast}(\rho\ppr)M^{\ast}(\mathrm{pr}_Z)M^{\ast}(\lambda)\\
&=&M^{\ast}(\mathrm{pr}_{\Pi})M_{\ast}(\rho)M^{\ast}(\lambda),
\end{eqnarray*}
we obtain $\eta\ppr_{\bullet}\circ\zeta^{\prime\ast}=\zeta^{\ast}\circ\eta_{\bullet}$.

\smallskip

\noindent\underline{Confirmation of {\rm (vii)}}

\smallskip

In the notation of Lemma \ref{LemC}, for any $(A\overset{p}{\rightarrow}Z,m_A)\in \mathcal{S}_M(Z)$, we have
\begin{eqnarray*}
\eta_{\bullet}\xi_+(A\overset{p}{\rightarrow}Z,m_A)&=&\eta_{\bullet}(A\overset{\xi\circ p}{\longrightarrow}X,m_A)\\
&=&(\Pi_{\eta\ppr}(A\ppr)\overset{\upsilon\circ\pi\ppr}{\longrightarrow}Y,M_{\ast}(\rho\ppr)M^{\ast}(\zeta\ppr\circ\lambda\ppr)(m_A))\\
&=&\upsilon_+(\Pi_{\eta\ppr}(A\ppr)\overset{\pi\ppr}{\longrightarrow}Y\ppr,M_{\ast}(\rho\ppr)M^{\ast}(\lambda\ppr)M^{\ast}(\zeta\ppr)(m_A))\\
&=&\upsilon_+\eta\ppr_{\bullet}(A\ppr\overset{p\ppr}{\rightarrow}X\ppr,M^{\ast}(\zeta\ppr)(m_A))\\
&=&\upsilon_+\eta\ppr_{\bullet}\zeta^{\ast}(A\overset{p}{\rightarrow}Z,m_A).
\end{eqnarray*}
\end{proof}

\begin{thm}\label{Thm1}
Let $\TS$ be a Tambara system on $G$. The construction of $\mathcal{S}_M$ in Proposition \ref{PropForThm1} gives a functor
\[ \mathcal{S}\colon\SMack\rightarrow\STam. \]
\end{thm}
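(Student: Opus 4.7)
The plan is to observe that Proposition \ref{PropForThm1} already equips $\mathcal{S}_M$ with the structure of a semi-Tambara functor for each object $M$ in $\SMack$, so the remaining content of Theorem \ref{Thm1} is the functoriality of the assignment $M\mapsto \mathcal{S}_M$. Given a morphism $\varphi\colon M\rightarrow N$ in $\SMack$, I will define $\mathcal{S}_\varphi=\{(\mathcal{S}_\varphi)_X\}_{X\in\Ob(\C)}$ by
\[ (\mathcal{S}_\varphi)_X(A\overset{p}{\rightarrow}X,m_A)=(A\overset{p}{\rightarrow}X,\varphi_A(m_A)). \]
The first step is to check this is well-defined on isomorphism classes: if $f\colon \Amo\rightarrow\Amt$ is an isomorphism in $M\HG/X$, then $M^{\ast}(f)(m_{A_2})=m_{A_1}$, and the contravariant naturality of $\varphi$ gives $N^{\ast}(f)(\varphi_{A_2}(m_{A_2}))=\varphi_{A_1}(M^{\ast}(f)(m_{A_2}))=\varphi_{A_1}(m_{A_1})$, so $f$ is also an isomorphism in $N\HG/X$.

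Next I would verify that each $(\mathcal{S}_\varphi)_X$ is a semi-ring homomorphism. For sums this is immediate from the contravariant naturality of $\varphi$ applied to the inclusions $\iota_1,\iota_2$; for products it follows from contravariant naturality of $\varphi$ applied to the projections $\varpi_1,\varpi_2$ together with the fact that $\varphi_{A_1\underset{X}{\times}A_2}$ is a monoid homomorphism, so it preserves the product $m_{A_1}\star m_{A_2}=M^{\ast}(\varpi_1)(m_{A_1})\cdot M^{\ast}(\varpi_2)(m_{A_2})$.

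Then I have to verify compatibility with the three kinds of structure morphisms $\zeta^{\ast}$, $\xi_+$, $\eta_{\bullet}$ of Proposition \ref{PropForThm1}. Compatibility with $\xi_+$ is immediate since $\xi_+$ only modifies the structure map $p$, leaving $m_A$ untouched. Compatibility with $\zeta^{\ast}$ follows from contravariant naturality of $\varphi$ applied to $\zeta\ppr$ in diagram $(\ref{Diag+1})$. The main obstacle, as I expect, will be compatibility with the multiplicative transfer $\eta_{\bullet}$: in diagram $(\ref{Diag+2})$, one must show
\[ \varphi_{\Pi_\eta(A)}(M_{\ast}(\rho)M^{\ast}(\lambda)(m_A))=N_{\ast}(\rho)N^{\ast}(\lambda)(\varphi_A(m_A)), \]
which requires combining the contravariant naturality of $\varphi$ along $\lambda$ with its covariant naturality along $\rho$. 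Since $\varphi$ is a morphism of semi-Mackey functors on $(\mathcal{C},\mathcal{O}_{\bullet})$ (and $\rho\in\COM$ because $\rho$ is the pull-back of $\eta\in\COM$ under $\pi$, by stability of $\mathcal{O}_{\bullet}$-morphisms under pull-backs), both naturality squares apply in turn, yielding the required identity.

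Finally I would check that $\mathcal{S}_{\mathrm{id}_M}=\mathrm{id}_{\mathcal{S}_M}$ and $\mathcal{S}_{\psi\circ\varphi}=\mathcal{S}_\psi\circ\mathcal{S}_\varphi$ for composable morphisms $\varphi\colon M\rightarrow N$, $\psi\colon N\rightarrow L$. Both are immediate from the corresponding pointwise identities $(\psi\circ\varphi)_A=\psi_A\circ\varphi_A$ applied to $m_A$. This establishes the functor $\mathcal{S}\colon\SMack\rightarrow\STam$.
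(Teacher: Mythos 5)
Your proposal is correct and follows essentially the same route as the paper: define $\mathcal{S}_{\varphi}$ by applying $\varphi_A$ to the second entry, note it preserves sums and products (hence gives semi-ring homomorphisms on each $\mathcal{S}_M(X)$), and verify compatibility with $\zeta^{\ast}$, $\xi_+$ and $\eta_{\bullet}$, the last via contravariant naturality along $\lambda$ and covariant naturality along $\rho$, before checking identities and compositions. Your explicit remark that $\rho\in\COM$ because it is a pull-back of $\eta\in\COM$ is a detail the paper leaves implicit, but otherwise the arguments coincide.
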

\begin{proof}
By Proposition \ref{PropForThm1}, $\mathcal{S}_M$ becomes an object in $\STam$ for any $M\in\Ob(\SMack)$.

It suffices to construct a morphism $\mathcal{S}_{\varphi}\in\STam(\mathcal{S}_M,\mathcal{S}_N)$ for each $\varphi\in\SMack(M,N)$ where $M,N\in\Ob(\SMack)$, in a functorial way.
For each $X\in\Ob(\C)$, we have a functor from $M\HG/X$ to $N\HG/X$ defined by
\begin{eqnarray*}
M\HG/X&\rightarrow&N\HG/X\\
\Am&\mapsto&(A\overset{p}{\rightarrow}X,\varphi_A(m_A)),\\
\end{eqnarray*}
where $\varphi_A$ is the component of $\varphi$ at $A$ (monoid homomorphism from $M(A)$ to $N(A)$).
This functor preserves sums and products, and thus we obtain a semi-ring homomorphism
\[ \mathcal{S}_{\varphi}(X)\colon \mathcal{S}_M(X)\rightarrow \mathcal{S}_N(X). \]
\begin{claim}\label{ClaimForThm1}
$\{ \mathcal{S}_{\varphi}(X)\mid X\in\Ob(\C) \}$ is compatible with structure morphisms of $\mathcal{S}_M$ and $\mathcal{S}_N$.
\end{claim}
If this claim is shown, it means $\mathcal{S}_{\varphi}\in\STam(\mathcal{S}_M,\mathcal{S}_N)$. Moreover, it can be easily checked that the correspondence $\varphi\mapsto \mathcal{S}_{\varphi}$ preserves the identities and compositions, so we obtain a functor $\mathcal{S}\colon\SMack\rightarrow\STam$. Thus it suffices to show Claim \ref{ClaimForThm1}.

\begin{proof}[Proof of Claim \ref{ClaimForThm1}]
Let $\Am$ be any element in $\mathcal{S}_M(X)$.

\smallskip

\noindent\underline{Compatibility with $\xi_+$}

\smallskip

Let $\xi\in\COC(X,Y)$ be any morphism.
Obviously we have
\begin{eqnarray*}
\mathcal{S}_{\varphi}(Y)\circ\xi_+\Am&=&(A\overset{\xi\circ p}{\longrightarrow}Y,\varphi_A(m_A))\\
&=&\xi_+\circ \mathcal{S}_{\varphi}(X)\Am.
\end{eqnarray*}
\[
\xy
(-12,6)*+{\mathcal{S}_M(X)}="0";
(12,6)*+{\mathcal{S}_N(X)}="2";
(-12,-6)*+{\mathcal{S}_M(Y)}="4";
(12,-6)*+{\mathcal{S}_N(Y)}="6";
{\ar^{\mathcal{S}_{\varphi}(X)} "0";"2"};
{\ar_{\xi_+} "0";"4"};
{\ar^{\xi_+} "2";"6"};
{\ar_{\mathcal{S}_{\varphi}(Y)} "4";"6"};
{\ar@{}|\circlearrowright "0";"6"};
\endxy
\]

\smallskip

\noindent\underline{Compatibility with $\eta_{\bullet}$}

\smallskip

Let $\eta\in\COM(X,Y)$ be any morphism, and let $(\ref{DiagExp1})$ be the exponential diagram.
Then we have
\begin{eqnarray*}
\mathcal{S}_{\varphi}(Y)\circ\eta_{\bullet}\Am&=&(\Pi_{\eta}(A)\overset{\pi}{\rightarrow}Y,\varphi_{\Pi_{\eta}(A)}M_{\ast}(\rho)M^{\ast}(\lambda)(m_A))\\
&=&(\Pi_{\eta}(A)\overset{\pi}{\rightarrow}Y,N_{\ast}(\rho)N^{\ast}(\lambda)\varphi_A(m_A))\\
&=&\eta_{\bullet}\circ \mathcal{S}_{\varphi}(X)\Am.
\end{eqnarray*}
\[
\xy
(-12,6)*+{\mathcal{S}_M(X)}="0";
(12,6)*+{\mathcal{S}_N(X)}="2";
(-12,-6)*+{\mathcal{S}_M(Y)}="4";
(12,-6)*+{\mathcal{S}_N(Y)}="6";
{\ar^{\mathcal{S}_{\varphi}(X)} "0";"2"};
{\ar_{\eta_{\bullet}} "0";"4"};
{\ar^{\eta_{\bullet}} "2";"6"};
{\ar_{\mathcal{S}_{\varphi}(Y)} "4";"6"};
{\ar@{}|\circlearrowright "0";"6"};
\endxy
\]

\smallskip

\noindent\underline{Compatibility with $\zeta^{\ast}$}

\smallskip

Let $\zeta\in\C(Y,X)$ be any morphism, and let $(\ref{Diag+1})$
be the pull-back diagram. Then we have
\begin{eqnarray*}
\mathcal{S}_{\varphi}(Y)\circ\zeta^{\ast}\Am&=&(A\ppr\overset{p\ppr}{\rightarrow}Y,\varphi_{A\ppr}M^{\ast}(\zeta\ppr)(m_A))\\
&=&(A\ppr\overset{p\ppr}{\rightarrow}Y,N^{\ast}(\zeta\ppr)\varphi_{A}(m_A))\\
&=&\zeta^{\ast}\mathcal{S}_{\varphi}(X)\Am.
\end{eqnarray*}
\[
\xy
(-12,6)*+{\mathcal{S}_M(X)}="0";
(12,6)*+{\mathcal{S}_N(X)}="2";
(-12,-6)*+{\mathcal{S}_M(Y)}="4";
(12,-6)*+{\mathcal{S}_N(Y)}="6";
{\ar^{\mathcal{S}_{\varphi}(X)} "0";"2"};
{\ar_{\zeta^{\ast}} "0";"4"};
{\ar^{\zeta^{\ast}} "2";"6"};
{\ar_{\mathcal{S}_{\varphi}(Y)} "4";"6"};
{\ar@{}|\circlearrowright "0";"6"};
\endxy
\]
\end{proof}
\end{proof}

\begin{cor}
Composing $\mathcal{S}$ with $\gamma\colon\STam\rightarrow\Tam$, we obtain a functor
\[ \mathcal{T}=\gamma\circ\mathcal{S}\colon\SMack\rightarrow\Tam. \]
Moreover, since $\Mack$ is a full subcategory of $\SMack$, we also obtain functors
\begin{eqnarray*}
\Mack&\rightarrow&\STam,\\
\Mack&\rightarrow&\Tam.
\end{eqnarray*}
\end{cor}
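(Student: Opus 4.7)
The corollary is a formal consequence of Theorem \ref{Thm1} together with the functoriality assertions already recorded in the preceding remarks, so the plan is essentially to assemble pieces rather than to prove anything new.

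First I would invoke Theorem \ref{Thm1} to get the functor $\mathcal{S}\colon\SMack\rightarrow\STam$, and invoke Remark \ref{RemForget} to get the ring-completion functor $\gamma\colon\STam\rightarrow\Tam$ (which is there asserted to be a genuine functor, being in fact the left adjoint of $\mathcal{U}$). Since the composition of two functors is a functor, setting $\mathcal{T}=\gamma\circ\mathcal{S}$ immediately yields a functor $\SMack\rightarrow\Tam$, which is the first assertion.

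For the second assertion I would first check that $\Mack$ is indeed a full subcategory of $\SMack$. This is immediate from Definition \ref{DefMackFtr}: a Mackey functor is by definition a semi-Mackey functor whose values are abelian groups (rather than mere commutative monoids), and a morphism of Mackey functors is defined to be a morphism of semi-Mackey functors between such objects, with no additional data or axioms. Hence the inclusion $\iota\colon\Mack\hookrightarrow\SMack$ is a fully faithful functor. Composing $\iota$ with $\mathcal{S}$ and with $\mathcal{T}=\gamma\circ\mathcal{S}$ then produces the two remaining functors
\[ \mathcal{S}\circ\iota\colon\Mack\rightarrow\STam,\qquad \mathcal{T}\circ\iota\colon\Mack\rightarrow\Tam. \]

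There is essentially no main obstacle: the only substantive content has been absorbed into Theorem \ref{Thm1} and Remark \ref{RemForget}. The single point worth stating explicitly, to justify the word ``full'' in ``full subcategory'', is the observation about Definition \ref{DefMackFtr} above; the rest is formal functor composition.
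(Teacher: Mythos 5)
Your proposal is correct and matches the paper's intent exactly: the paper states this corollary without proof, treating it as the formal composition of $\mathcal{S}$ (Theorem \ref{Thm1}) with $\gamma$ (Remark \ref{RemForget}) and with the full inclusion $\Mack\hookrightarrow\SMack$ coming from Definition \ref{DefMackFtr}. Your explicit check that $\Mack$ is a full subcategory is the only point of substance, and it is the right one.
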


\subsection{Adjoint property}


\begin{thm}\label{Thm2}
Let $\TS$ be any Tambara system on $G$.
The functor constructed in Theorem \ref{Thm1}
\[ \mathcal{S}\colon\SMack\rightarrow\STam \]
is left adjoint to the forgetful functor $\mu\colon\STam\rightarrow\SMack$.
\end{thm}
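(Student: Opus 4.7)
The plan is to establish the adjunction by constructing the unit $\eta_M\colon M\to\mu\mathcal{S}_M$ together with its universal property. At each $X\in\Ob(\C)$, define $(\eta_M)_X(m) = (X\overset{\mathrm{id}_X}{\rightarrow}X, m)$. Multiplicativity of $(\eta_M)_X$ is immediate since $X\times_X X\cong X$. Naturality with respect to $\zeta\in\C(Y,X)$ is clear, as the pullback of $\mathrm{id}_X$ by $\zeta$ is $\mathrm{id}_Y$. For naturality with respect to $\eta\in\COM(Y,X)$, one computes that in the exponential diagram for $(Y\overset{\mathrm{id}_Y}{\rightarrow}Y,m)$ one has $\Pi_\eta(Y)\cong X$, $\pi=\mathrm{id}_X$, $\rho=\eta$, and $\lambda=\mathrm{id}_Y$, so the output is $(\eta_M)_X(M_\ast(\eta)(m))$.

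For any $\varphi\in\SMack(M,\mu S)$, observe that every $\Am\in\mathcal{S}_M(X)$ can be written as $p_+((A\overset{\mathrm{id}_A}{\rightarrow}A,m_A))=p_+((\eta_M)_A(m_A))$. Hence if $\tilde\varphi\colon\mathcal{S}_M\to S$ is a semi-Tambara morphism with $\mu\tilde\varphi\circ\eta_M=\varphi$, compatibility with $p_+$ forces
\[ \tilde{\varphi}_X\Am = p_+\bigl(\varphi_A(m_A)\bigr). \]
This yields uniqueness immediately; the substance of the proof is showing that this formula really does define an object of $\STam(\mathcal{S}_M,S)$.

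Well-definedness on isomorphism classes in $M\HG/X$ rests on the identity $f_+=(f^{\ast})^{-1}$ for isomorphisms $f$ in $\C$ (a consequence of the Mackey condition applied to the trivial pullback of $f$ against itself), combined with naturality of $\varphi$ in $\C$. The semi-ring properties of $\tilde\varphi_X$ split into: additivity, by first using the additivity of $S$ to expand $\varphi_{A_1\amalg A_2}(m_{A_1}\amalg m_{A_2})=(\iota_1)_+\varphi_{A_1}(m_{A_1})+(\iota_2)_+\varphi_{A_2}(m_{A_2})$ and then invoking functoriality of $(-)_+$; and multiplicativity, via the Frobenius-type identity
\[ (p_1)_+(a_1)\cdot(p_2)_+(a_2)=p_+\bigl(\varpi_1^{\ast}(a_1)\cdot\varpi_2^{\ast}(a_2)\bigr) \]
that holds in any semi-Tambara functor (derivable from the exponential axiom as in \cite{Tam}), together with the multiplicativity of each $\varphi_A$.

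Compatibility of $\tilde\varphi$ with the structure morphisms of $\mathcal{S}_M$ is handled as follows: for $\xi_+$ by direct functoriality of $(-)_+$, for $\zeta^{\ast}$ by the Mackey condition in $S$ applied to the pullback $(\ref{Diag+1})$, and for $\eta_\bullet$, which is the main technical obstacle, by the exponential axiom in $S$ applied to $(\ref{Diag+2})$. Explicitly, this last step combines
\[ \eta_\bullet\bigl(p_+\varphi_A(m_A)\bigr)=\pi_+S_\bullet(\rho)S^{\ast}(\lambda)\varphi_A(m_A)=\pi_+\varphi_{\Pi_\eta(A)}\bigl(M_\ast(\rho)M^{\ast}(\lambda)(m_A)\bigr), \]
where the first equality is the exponential axiom in $S$ and the second is naturality of $\varphi$ with respect to both $\C$ and $\COM$. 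Once these compatibilities are verified, the resulting bijection $\STam(\mathcal{S}_M,S)\cong\SMack(M,\mu S)$ is natural in $M$ and $S$, completing the adjunction.
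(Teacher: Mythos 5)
Your proposal is correct and takes essentially the same route as the paper: your unit $m\mapsto(X\overset{\mathrm{id}_X}{\rightarrow}X,m)$ and the forced formula $\tilde{\varphi}_X\Am=S_+(p)\varphi_A(m_A)$ are exactly the paper's mutually inverse maps $\Phi$ and $\Psi$, and the key verifications (exponential axiom for $\eta_{\bullet}$, Mackey condition for $\zeta^{\ast}$, functoriality of $(-)_+$ for $\xi_+$) coincide. The only differences are cosmetic packaging (universal property of the unit versus an explicit hom-set bijection) and that you spell out well-definedness and the semi-ring homomorphism property via the projection formula, which the paper leaves implicit.
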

\begin{proof}
We construct natural maps
\begin{eqnarray*}
\Phi\colon\STam(\mathcal{S}_M,T)&\rightarrow&\SMack(M,T^{\mu})\\
\Psi\colon\SMack(M,T^{\mu})&\rightarrow&\STam(\mathcal{S}_M,T)
\end{eqnarray*}
for each $M\in\Ob(\SMack)$, $T\in\Ob(\STam)$, and show $\Psi$ and $\Phi$ are mutually inverse.

Let $\psi\in\STam(\mathcal{S}_M,T)$ be any morphism. For each $X\in\Ob(\C)$, define $\Phi(\psi)_X\colon M(X)\rightarrow T^{\mu}(X)$ by
\[
\Phi(\psi)_X(m)=\psi_X(X\overset{\mathrm{id}_X}{\rightarrow}X,m)\qquad({}^{\forall}m\in M(X)).
\]

On the contrary, for each $\varphi\in\SMack(M,T^{\mu})$ and $X\in\Ob(\C)$, define $\Psi(\varphi)_X\colon \mathcal{S}_M(X)\rightarrow T(X)$ by
\begin{eqnarray*}
&\Psi(\varphi)_X\Am=T_+(p)\circ\varphi_A(m_A)&\\
&({}^{\forall}\Am\in \mathcal{S}_M(X)).&
\end{eqnarray*}

\begin{claim}\label{ClaimForThm2}
For any $\varphi\in\SMack(M,T^{\mu})$ and $\psi\in\STam(\mathcal{S}_M,T)$, the following are satisfied.
\begin{enumerate}
\item[{\rm (i)}] $\Phi(\psi):=\{ \Phi(\psi)_X\}_{X\in\Ob(\C)}$ belongs to $\SMack(M,T^{\mu})$.
\item[{\rm (ii)}] $\Psi(\varphi):=\{ \Psi(\varphi)_X\}_{X\in\Ob(\C)}$ belongs to $\STam(\mathcal{S}_M,T)$.
\item[{\rm (iii)}] $\Psi\circ\Phi(\psi)=\psi$, $\Phi\circ\Psi(\varphi)=\varphi$.
\end{enumerate}
\end{claim}

\begin{proof}[Proof of Claim \ref{ClaimForThm2}]
\noindent{{\rm (i)}} It suffices to show the commutativity of
\[
\mathrm{(ia)}\ 
\xy
(-9,6)*+{M(X)}="0";
(9,6)*+{T(X)}="2";
(-9,-6)*+{M(Y)}="4";
(9,-6)*+{T(Y)}="6";
(14,-7)*+{,}="10";
{\ar^{\Phi(\psi)_X} "0";"2"};
{\ar_{M_{\ast}(\eta)} "0";"4"};
{\ar^{T_{\bullet}(\eta)} "2";"6"};
{\ar_{\Phi(\psi)_Y} "4";"6"};
\endxy
\qquad\ \mathrm{(ib)}\ 
\xy
(-9,6)*+{M(X)}="0";
(9,6)*+{T(X)}="2";
(-9,-6)*+{M(Y)}="4";
(9,-6)*+{T(Y)}="6";
(14,-7)*+{.}="10";
{\ar^{\Phi(\psi)_X} "0";"2"};
{\ar_{M^{\ast}(\zeta)} "0";"4"};
{\ar^{T^{\ast}(\zeta)} "2";"6"};
{\ar_{\Phi(\psi)_Y} "4";"6"};
\endxy
\]
for arbitrary morphisms $\eta\in\COM(X,Y)$ and $\zeta\in\C(Y,X)$.
Let $m\in M(X)$ be any element.

Commutativity of {\rm (ia)} follows from
\begin{eqnarray*}
T_{\bullet}(\eta)\Phi(\psi)_X(m)&=&T_{\bullet}(\eta)\psi_X(X\overset{\mathrm{id}_X}{\rightarrow}X,m)\\
&=&\psi_Y\eta_{\bullet}(X\overset{\mathrm{id}_X}{\rightarrow}X,m)\\
&=&\psi_Y(Y\overset{\mathrm{id}_Y}{\rightarrow}Y,M_{\ast}(\eta)(m))\\
&=&\Phi(\psi)_YM_{\ast}(\eta)(m),
\end{eqnarray*}
since
\[
\xy
(-12,6)*+{X}="0";
(-12,-6)*+{Y}="2";
(0,6)*+{X}="4";
(12,6)*+{X}="6";
(12,-6)*+{Y}="8";
(0,0)*+{\mathit{exp}}="10";
{\ar_{\eta} "0";"2"};
{\ar_{\mathrm{id}_X} "4";"0"};
{\ar_{\mathrm{id}_X} "6";"4"};
{\ar^{\eta} "6";"8"};
{\ar^{\mathrm{id}_Y} "8";"2"};
\endxy
\]
is an exponential diagram.

Commutativity of {\rm (ib)} follows from
\begin{eqnarray*}
T^{\ast}(\zeta)\Phi(\psi)_X(m)&=&T^{\ast}(\zeta)\psi_X(X\overset{\mathrm{id}_X}{\rightarrow}X,m)\\
&=&\psi_Y\zeta^{\ast}(X\overset{\mathrm{id}_X}{\rightarrow}X,m)\\
&=&\psi_Y(Y\overset{\mathrm{id}_Y}{\rightarrow}Y,M^{\ast}(\zeta)(m))\\
&=&\Phi(\psi)_YM^{\ast}(\zeta)(m).
\end{eqnarray*}

\noindent {\rm (ii)}
It suffices to show the commutativity of
\begin{eqnarray*}
&\mathrm{(iia)}
\xy
(-10,6)*+{\mathcal{S}_M(X)}="0";
(10,6)*+{T(X)}="2";
(-10,-6)*+{\mathcal{S}_M(Y)}="4";
(10,-6)*+{T(Y)}="6";
(15,-7)*+{,}="10";
{\ar^{\Psi(\varphi)_X} "0";"2"};
{\ar_{\xi_+} "0";"4"};
{\ar^{T_+(\xi)} "2";"6"};
{\ar_{\Psi(\varphi)_Y} "4";"6"};
\endxy
\qquad\ \mathrm{(iib)}
\xy
(-10,6)*+{\mathcal{S}_M(X)}="0";
(10,6)*+{T(X)}="2";
(-10,-6)*+{\mathcal{S}_M(Y)}="4";
(10,-6)*+{T(Y)}="6";
(15,-7)*+{,}="10";
{\ar^{\Psi(\varphi)_X} "0";"2"};
{\ar_{\eta_{\bullet}} "0";"4"};
{\ar^{T_{\bullet}(\eta)} "2";"6"};
{\ar_{\Psi(\varphi)_Y} "4";"6"};
\endxy
&
\\
&\mathrm{(iic)}
\xy
(-10,6)*+{\mathcal{S}_M(X)}="0";
(10,6)*+{T(X)}="2";
(-10,-6)*+{\mathcal{S}_M(Y)}="4";
(10,-6)*+{T(Y)}="6";
(15,-7)*+{.}="10";
{\ar^{\Psi(\varphi)_X} "0";"2"};
{\ar_{\zeta^{\ast}} "0";"4"};
{\ar^{T^{\ast}(\zeta)} "2";"6"};
{\ar_{\Psi(\varphi)_Y} "4";"6"};
\endxy
&
\end{eqnarray*}
for arbitrary $\xi\in\COC(X,Y)$, $\eta\in\COM(X,Y)$ and $\zeta\in\C(Y,X)$.

Let $\Am$ be any element in $\mathcal{S}_M(X)$. The commutativity of {\rm (iia)} follows from
\begin{eqnarray*}
T_+(\xi)\Psi(\varphi)_X\Am&=&T_+(\xi)T_+(p)\varphi_A(m_A)\\
&=&\Psi(\varphi)_Y(A\overset{\xi\circ p}{\longrightarrow}Y,m_A)\\
&=&\Psi(\varphi)_Y\xi_+\Am.
\end{eqnarray*}

In the notation of exponential diagram $(\ref{Diag+2})$, the commutativity of {\rm (iib)} follows from
\begin{eqnarray*}
T_{\bullet}(\eta)\Psi(\varphi)_X\Am&=&T_{\bullet}(\eta)T_+(p)\varphi_A(m_A)\\
&=&T_+(\upsilon)T_{\bullet}(\rho)T^{\ast}(\lambda)\varphi_A(m_A)\\
&=&T_+(\upsilon)\varphi_{Y\ppr}M_{\ast}(\rho)M^{\ast}(\lambda)(m_A)\\
&=&\Psi(\varphi)_Y(Y\ppr\overset{\upsilon}{\rightarrow}Y,M_{\ast}(\rho)M^{\ast}(\lambda)(m_A))\\
&=&\Psi(\varphi)_Y\eta_{\bullet}\Am.
\end{eqnarray*}
In the notation of $(\ref{Diag+1})$, the commutativity of {\rm (iic)} follows from
\begin{eqnarray*}
T^{\ast}(\zeta)\Psi(\varphi)_X\Am&=&T^{\ast}(\zeta)T_+(p)\varphi_A(m_A)\\
&=&T_+(p\ppr)T^{\ast}(\zeta\ppr)\varphi_A(m_A)\\
&=&T_+(p\ppr)\varphi_{Y\ppr}M^{\ast}(\zeta\ppr)(m_A)\\
&=&\Psi(\varphi)_Y(A\ppr\overset{p\ppr}{\rightarrow}Y,M^{\ast}(\zeta\ppr)(m_A))\\
&=&\Psi(\varphi)_Y\zeta^{\ast}\Am.
\end{eqnarray*}

\noindent{\rm (iii)}
Let $X$ be any object in $\C$. For any $m\in M(X)$, we have
\begin{eqnarray*}
(\Phi\circ\Psi(\varphi))_X(m)&=&\Psi(\varphi)_X(X\overset{\mathrm{id}_X}{\rightarrow}X,m)\\
&=&T_+(\mathrm{id}_X)\varphi_X(m)=\varphi_X(m).
\end{eqnarray*}

Conversely, for any $\Am\in \mathcal{S}_M(X)$, we have
\begin{eqnarray*}
(\Psi\circ\Phi(\psi))_X\Am&=&T_+(p)\Phi(\psi)_A(m_A)\\
&=&T_+(p)\psi_A(A\overset{\mathrm{id}_A}{\rightarrow}A,m_A)\\
&=&\psi_Xp_+(A\overset{\mathrm{id}_A}{\rightarrow}A,m_A)\\
&=&\psi_X\Am.
\end{eqnarray*}
\end{proof}
\end{proof}

\begin{cor}\label{CorOfThm2}Composition
\[ \mathcal{T}=\gamma\circ\mathcal{S}\colon\SMack\rightarrow\Tam \]
is left adjoint to the forgetful functor $\mu\circ \mathcal{U}\colon\Tam\rightarrow\SMack$. 
\end{cor}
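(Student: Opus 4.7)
The plan is to invoke the standard fact that the composite of two left adjoints is left adjoint to the composite of their right adjoints, applied to the adjunctions already established.

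First, Theorem \ref{Thm2} provides a natural bijection
\[
\STam(\mathcal{S}_M, S) \;\cong\; \SMack(M, \mu S)
\]
for every $M \in \Ob(\SMack)$ and $S \in \Ob(\STam)$. Second, Remark \ref{RemForget} records that $\gamma\colon\STam\to\Tam$ is left adjoint to the inclusion $\mathcal{U}\colon\Tam\to\STam$, which means there is a natural bijection
\[
\Tam(\gamma S, T) \;\cong\; \STam(S, \mathcal{U} T)
\]
for every $S \in \Ob(\STam)$ and $T \in \Ob(\Tam)$.

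Composing these, for any $M \in \Ob(\SMack)$ and $T \in \Ob(\Tam)$ we obtain natural bijections
\[
\Tam(\mathcal{T}_M, T)
= \Tam(\gamma\mathcal{S}_M, T)
\;\cong\; \STam(\mathcal{S}_M, \mathcal{U} T)
\;\cong\; \SMack(M, \mu\mathcal{U} T),
\]
which is exactly the required adjunction $\mathcal{T} \dashv \mu\circ\mathcal{U}$. Naturality in $M$ and $T$ follows from the naturality of each of the two bijections in their respective variables, since each functor ($\mathcal{S}$, $\gamma$, $\mathcal{U}$, $\mu$) is a functor between the relevant categories.

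There is essentially no obstacle here: the content of the corollary is entirely packaged into Theorem \ref{Thm2} and the already-known adjunction $\gamma\dashv\mathcal{U}$ of Remark \ref{RemForget}, and the remaining step is the formal composition of adjunctions. If one wishes to be explicit, the unit of $\mathcal{T}\dashv \mu\mathcal{U}$ is the composite $M \to \mu\mathcal{S}_M \to \mu\mathcal{U}\gamma\mathcal{S}_M$ of the units of the two constituent adjunctions, and the counit is $\gamma\mathcal{S}\mu\mathcal{U} T \to \gamma\mathcal{U} T \to T$; verifying the triangle identities reduces to the triangle identities for each factor.
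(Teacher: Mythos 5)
Your proposal is correct and follows the same route as the paper, which likewise deduces the corollary immediately from Theorem \ref{Thm2} together with the adjunction $\gamma\dashv\mathcal{U}$ of Remark \ref{RemForget}; you have merely spelled out the formal composition of the two adjunctions that the paper leaves implicit.
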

\begin{proof}
This immediately follows from Remark \ref{RemForget} and Theorem \ref{Thm2}.
\end{proof}
With this corollary, $\mathcal{T}$ can be regarded as a $G$-bivariant analog of the functor taking monoid-rings. Indeed if $G$ is trivial, this is equivalent to the monoid-ring functor:
\[
\xy
(-14,6)*+{\mathit{SMack}(\{e\})}="0";
(14,6)*+{\Mon}="2";
(-14,-6)*+{\mathit{Tam}(\{e\})}="4";
(14,-6)*+{\Ring}="6";
{\ar^{\simeq} "0";"2"};
{\ar_{\mathcal{T}} "0";"4"};
{\ar^{\mathbb{Z}[-]} "2";"6"};
{\ar_{\simeq} "4";"6"};
{\ar@{}|\circlearrowright "0";"6"};
\endxy
\]

\section{Relation with other constructions}

In the rest, $G$ is assumed to be finite, and we only consider the natural Mackey system on $G$, as in Remark \ref{RemFinMack} and Remark \ref{RemFinTam}. In this case, we have $\GS=\C$. 
As in Remark \ref{FinInf}, we may work over the category $\sG$ of finite $G$-sets. For example, $M\HG/X$ in Definition \ref{DefSM} is replaced by $M\text{-}\sG/X$.

\subsection{Relation with the crossed Burnside ring}
\label{3-1}

\smallskip

Let $Q$ be a (not necessarily finite) $G$-monoid, and $\mathcal{P}_Q$ be the fixed point functor associated to $Q$ (Example \ref{ExFix}).
By Theorem \ref{Thm1}, we obtain a Tambara functor $\mathcal{T}_{\mathcal{P}_Q}$. Recall that we have a sequence of functors
\[ G\text{-}\Mon\overset{\mathcal{P}}{\longrightarrow}\SMackG\overset{\mathcal{T}}{\longrightarrow}\TamG, \]
where $G\text{-}\Mon$ is the category of $G$-monoids.

We show $\mathcal{T}_{\mathcal{P}_Q}$ generalizes the crossed Burnside ring functor in \cite{O-Y2}, \cite{O-Y3}. Indeed, if $Q$ is finite, we construct an isomorphism of Tambara functors between $\mathcal{T}_{\mathcal{P}_Q}$ and the crossed Burnside ring functor, in Proposition \ref{PropCBR}.

First we recall the definition of the crossed Burnside ring. 
\begin{dfn}\label{DefCrossBurn}
(\S 4.3 in \cite{O-Y2}) Fix a finite $G$-monoid $Q$. The category of crossed $G$-sets $\CrossG$ is defined as follows.
\begin{enumerate}
\item[{\rm (a)}] An object in $\CrossG$ is a triplet $\Am$ of a finite $G$-set $A$, $G$-maps $f\in\sG(A,X)$ and $m_A\in\sG(A,Q)$.
\item[{\rm (b)}] A morphism from $\Amo$ to $\Amt$ in $\CrossG$ is a $G$-map $f\in\sG(A_1,A_2)$ satisfying $p_2\circ f=p_1$ and $m_{A_2}\circ f=m_{A_1}$.
\[
\xy
(-8,6)*+{A_1}="0";
(8,6)*+{A_2}="2";
(0,-6)*+{X}="4";
(0,9)*+{}="6";
{\ar^{f} "0";"2"};
{\ar_{p_1} "0";"4"};
{\ar^{p_2} "2";"4"};
{\ar@{}|\circlearrowright "4";"6"};
\endxy
\quad
\xy
(-8,6)*+{A_1}="0";
(8,6)*+{A_2}="2";
(0,-6)*+{Q}="4";
(0,9)*+{}="6";
{\ar^{f} "0";"2"};
{\ar_{m_{A_1}} "0";"4"};
{\ar^{m_{A_2}} "2";"4"};
{\ar@{}|\circlearrowright "4";"6"};
\endxy
\]
\end{enumerate}
In $\CrossG$, for any pair of objects $\Ami$ $(i=1,2)$, their sum $(=${\it coproduct} in \cite{O-Y2}$)$ and product $(=${\it tensor product} in \cite{O-Y2}$)$ are
\begin{eqnarray*}
\Amo+\Amt&=&(A_1\amalg A_2\overset{p_1\cup p_2}{\longrightarrow}X,m_{A_1}\cup m_{A_2}),\\
\Amo\cdot\Amt&=&(A_1\times_X A_2\overset{p}{\rightarrow}X,m_{A_1}\ast m_{A_2}),
\end{eqnarray*}
where
\[
\xy
(-8,3.4)*+{}="-1";
(-9.5,5.8)*+{A_1\underset{X}{\times}A_2}="0";
(-2,7)*+{}="1";
(8,7)*+{A_2}="2";
(-8,-7)*+{A_1}="4";
(8,-7)*+{Y}="6";
(0,0)*+{\square}="8";
{\ar^>>>>>{\varpi_2} "1";"2"};
{\ar_>>>>>{\varpi_1} "-1";"4"};
{\ar^{p_2} "2";"6"};
{\ar_{p_1} "4";"6"};
\endxy
\]
is a pull-back diagram, $p=p_1\circ\varpi_1=p_2\circ\varpi_2$, and $m_{A_1}\ast m_{A_2}$ is defined by
\[ m_{A_1}\ast m_{A_2}(a_1,a_2)=m_{A_1}(a_1)m_{A_2}(a_2)\ \ ({}^{\forall}(a_1,a_2)\in A_1\times_XA_2). \]
The crossed Burnside ring $\Omega_Q(X)$ is defined to be the Grothendieck ring of this category:
\[ \Omega_Q(X)=K_0(\CrossG) \]
\end{dfn}
Remark that if $Q$ is trivial, then $\Omega_Q(X)$ is nothing other than the ordinary Burnside ring $\Omega(X)$.
As shown in \cite{O-Y2} and \cite{O-Y3}, $\Omega_Q$ has a structure of a Tambara functor. For any $f\in\sG(X,Y)$, the structure morphisms $f^{\ast}$, $f_+$, $f_{\bullet}$ are defined in the following way, which generalizes those for the ordinary Burnside ring functor $\Omega$.
\begin{enumerate}
\item[{\rm (i)}] $f^{\ast}\colon\Omega_Q(Y)\rightarrow\Omega_Q(X)$ is the ring homomorphism induced from
\[ f^{\ast}(B\overset{q}{\rightarrow}Y,m_B)=(A\overset{p}{\rightarrow}X,m_B\circ f\ppr) \quad ({}^{\forall}(B\overset{q}{\rightarrow}Y,m_B)\in \Ob(G\text{-}\mathit{xset}/QY)), \] where
\[
\xy
(-6,6)*+{A}="0";
(6,6)*+{B}="2";
(-6,-6)*+{X}="4";
(6,-6)*+{Y}="6";
(0,0)*+{\square}="8";
{\ar^{f^{\prime}} "0";"2"};
{\ar_{p} "0";"4"};
{\ar^{q} "2";"6"};
{\ar_{f} "4";"6"};
\endxy
\]
is the pull-back.
\item[{\rm (ii)}] $f_+\colon\Omega_Q(X)\rightarrow\Omega_Q(Y)$ is the additive homomorphism induced from
\[ f_+\Am=(A\overset{f\circ p}{\longrightarrow}Y,m_A) \quad ({}^{\forall}\Am\in \Ob(\CrossG)). \]
\item[{\rm (iii)}] To define $f_{\bullet}\colon\Omega_Q(X)\rightarrow\Omega_Q(Y)$, remark that there exists a ring isomorphism
\[ \Omega_Q(X)\overset{\cong}{\longrightarrow}\Omega(X\times Q), \]
which takes $\Am$ to $(A\overset{(p,m_A)}{\longrightarrow}X\times Q)$.
From this, we can import the multiplicative transfers into $\Omega_Q$ from those for the ordinary Burnside ring functor $\Omega$.
Let
\[
\xy
(-22,6)*+{X}="0";
(-22,-6)*+{Y}="2";
(-7,6)*+{X\times Q}="4";
(5,6)*+{}="5";
(18,4.7)*+{X\underset{Y}{\times}\Pi_f(X\times Q)}="6";
(16,2)*+{}="7";
(16,-6)*+{\Pi_f(X\times Q)}="8";
(31,6)*+{}="9";
(-6,0)*+{\mathit{exp}}="10";
{\ar_{f} "0";"2"};
{\ar_{p_X} "4";"0"};
{\ar_>>>>>{e} "5";"4"};
{\ar^>>>>{f\ppr} "7";"8"};
{\ar^{\pi} "8";"2"};
\endxy
\]
be an exponential diagram, and let
\[ \mu_f\colon\Pi_f(X\times Q)\rightarrow Y\times Q \]
be a morphism defined by
\[ \mu_f(y,\sigma)=(y,\prod_{x\in f^{-1}(y)}\sigma(x)) \]
for any $(y,\sigma)\in\Pi_f(X\times Q)$. Remark that $\Pi_f(X\times Q)$ and $e$ is
\[
\Pi_f(X\times Q)=\Set{(y,\sigma)|%
\begin{array}{l}%
y\in Y, \\
\sigma\colon f^{-1}(y)\rightarrow X\times Q \ \, \text{is a map of sets},\\
\ p_X\circ \sigma=\mathrm{id}_{f^{-1}(y)}%
\end{array}},\]
and $e$ is defined by $e(x,(y,\sigma))=\sigma(x)$ for any $(x,(y,\sigma))\in X\underset{Y}{\times}\Pi_f(X\times Q)$.

Using these, define $f_{\bullet}\colon\Omega_Q(X)\rightarrow\Omega_Q(Y)$ by
\begin{eqnarray*}
f_{\bullet}=(\Omega_Q(X)\cong\Omega(X\times Q)%
&\overset{e^{\ast}}{\longrightarrow}&\Omega(X\times_Y\Pi_f(X\times Q))\\
&\overset{f\ppr_{\bullet}}{\longrightarrow}&\Omega(\Pi_f(X\times Q))\\
&\overset{(\mu_f)_+}{\longrightarrow}&\Omega(Y\times Q)\cong\Omega_Q(Y)),
\end{eqnarray*}
where $f\ppr_{\bullet}$ is the multiplicative transfer for the ordinary Burnside ring functor defined in \cite{Tam}.
For any $S\in\Ob(\sG/(X\times_Y\Pi_f(X\times Q)))$, its image $f\ppr_{\bullet}(S)$ is defined to be $\Pi_{f\ppr}(S)$, by the exponential diagram:
\[
\xy
(5,6)*+{}="5";
(18,4.7)*+{X\underset{Y}{\times}\Pi_f(X\times Q)}="6";
(16,2)*+{}="7";
(16,-6)*+{\Pi_f(X\times Q)}="8";
(31,6)*+{}="9";
{\ar_>>>>{f\ppr} "7";"8"};
(39,6)*+{S}="12";
(50,6)*+{T}="14";
(50,-6)*+{\Pi_{f\ppr}(S)}="16";
(34,0)*+{\mathit{exp}}="18";
{\ar_{} "12";"9"};
{\ar_>>>>>{} "14";"12"};
{\ar^>>>>{} "14";"16"};
{\ar^{} "16";"8"};
\endxy
\]
\end{enumerate}

\begin{prop}\label{PropCBR}
For any finite $G$-monoid $Q$, there is a natural isomorphism of Tambara functors
\[ \varphi\colon\mathcal{T}_{\mathcal{P}_Q}\overset{\cong}{\longrightarrow}\Omega_Q.  \]
\end{prop}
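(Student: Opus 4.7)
The plan is to construct $\varphi$ from the tautological identification of generating data. An object $\Am$ of $\mathcal{P}_Q\text{-}\sG/X$ in Definition \ref{DefSM} consists of $p\in\sG(A,X)$ together with $m_A\in\mathcal{P}_Q(A)=\GS(A,Q)$, which is precisely the same datum as an object of $\CrossG$. On each $X\in\Ob(\sG)$ I would therefore let $\varphi_X$ be the semi-ring homomorphism sending the class $[\Am]\in\mathcal{S}_{\mathcal{P}_Q}(X)$ to the class of the same triple in $\Omega_Q(X)$, and extend by the universal property of $K_0$ to $\varphi_X\colon\mathcal{T}_{\mathcal{P}_Q}(X)\to\Omega_Q(X)$. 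Since this is a bijection on iso classes of generators, it is automatically an isomorphism of rings once additivity and multiplicativity are checked.

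To verify that $\varphi_X$ is a semi-ring homomorphism, I match sums and products directly. For sums, $m_{A_1}\amalg m_{A_2}\in\mathcal{P}_Q(A_1\amalg A_2)$ is by construction the unique $G$-map $A_1\amalg A_2\to Q$ restricting to $m_{A_i}$ on each summand, i.e., the coproduct $m_{A_1}\cup m_{A_2}$ of Definition \ref{DefCrossBurn}. For products, $m_{A_1}\star m_{A_2}=\mathcal{P}_Q^{\ast}(\varpi_1)(m_{A_1})\cdot\mathcal{P}_Q^{\ast}(\varpi_2)(m_{A_2})$ evaluates at $(a_1,a_2)\in A_1\underset{X}{\times}A_2$ to $m_{A_1}(a_1)\,m_{A_2}(a_2)$, agreeing with $m_{A_1}\ast m_{A_2}$ in Definition \ref{DefCrossBurn}.

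Compatibility of $\varphi$ with $f^{\ast}$ and $f_+$ for $f\in\sG(X,Y)$ is then immediate, as both Tambara functors are defined on generators by the same pullback and post-composition formulas. The main obstacle is compatibility with $f_{\bullet}$: in $\mathcal{S}_{\mathcal{P}_Q}$ this is the single step
\[ f_{\bullet}\Am=(\Pi_f(A)\overset{\pi}{\rightarrow}Y,\mathcal{P}_{Q\,\ast}(\rho)\mathcal{P}_Q^{\ast}(\lambda)(m_A)), \]
whereas in $\Omega_Q$ it is the three-step composite
\[ \Omega_Q(X)\cong\Omega(X\times Q)\xrightarrow{e^{\ast}}\Omega(X\underset{Y}{\times}\Pi_f(X\times Q))\xrightarrow{f\ppr_{\bullet}}\Omega(\Pi_f(X\times Q))\xrightarrow{(\mu_f)_+}\Omega(Y\times Q)\cong\Omega_Q(Y). \]
To reconcile them I would evaluate the $\Omega_Q$-composite on the generator $(A\overset{(p,m_A)}{\longrightarrow}X\times Q)$ and observe that the section condition $p_X\circ\sigma=\mathrm{id}_{f^{-1}(y)}$ combined with the $e$-pullback forces any lifted section to take the form $\sigma(x)=(x,m_A(\tau(x)))$ for an accompanying lift $\tau\colon f^{-1}(y)\to A$ with $p\circ\tau=\mathrm{id}_{f^{-1}(y)}$. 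Consequently $\sigma$ is determined by $\tau$, so $\Pi_{f\ppr}$ of the $e^{\ast}$-pullback collapses onto $\Pi_f(A)$, and postcomposition with $\mu_f$ replaces the $Q$-coordinate at $(y,\tau)$ by $\prod_{x\in f^{-1}(y)}m_A(\tau(x))$, which by Example \ref{ExFix}(b) is exactly $\mathcal{P}_{Q\,\ast}(\rho)\mathcal{P}_Q^{\ast}(\lambda)(m_A)$ evaluated at $(y,\tau)\in\Pi_f(A)$. Once this is established, $\varphi$ is a morphism of Tambara functors, and by the first paragraph an isomorphism; naturality in $Q$ is automatic since a morphism of $G$-monoids acts by post-composition on $Q$-valued maps in both $\mathcal{P}_Q$ and $\Omega_Q$.
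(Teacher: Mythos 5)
Your construction of $\varphi_X$ and the element-level computation of the multiplicative transfer are essentially the paper's argument: the paper also starts from the tautological equivalence of $\mathcal{P}_Q\text{-}\sG/X$ with $\CrossG$, reduces everything to the compatibility with $f_{\bullet}$, and identifies the three-step composite defining $f_{\bullet}$ on $\Omega_Q$ with the one-step formula on $\mathcal{S}_{\mathcal{P}_Q}$; where you identify $\Pi_{f\ppr}$ of the $e^{\ast}$-pullback with $\Pi_f(A)$ by hand (sections $\sigma$ into $X\times Q$ correspond to sections $\tau$ into $A$ via $\sigma(x)=(x,m_A(\tau(x)))$), the paper gets the same identification from Lemma \ref{LemC}; either route is fine, and your final evaluation of the $Q$-coordinate as $\prod_{x\in f^{-1}(y)}m_A(\tau(x))$ matches equation $(\ref{SuffToShow**})$.

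However, the step ``Once this is established, $\varphi$ is a morphism of Tambara functors'' has a genuine gap. Your computation verifies $\varphi_Y\circ f_{\bullet}=f_{\bullet}\circ\varphi_X$ only on classes of actual objects, i.e.\ on the image of the semi-ring $\mathcal{S}_{\mathcal{P}_Q}(X)$ inside $\mathcal{T}_{\mathcal{P}_Q}(X)=K_0(\mathcal{S}_{\mathcal{P}_Q}(X))$. Unlike $f^{\ast}$ and $f_+$, the transfer $f_{\bullet}$ is \emph{not} additive, so a map on the Grothendieck ring is not determined by its values on such classes: agreement on generators does not automatically propagate to formal differences $[a]-[b]$. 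The paper closes exactly this hole by observing that $(\mathcal{T}_{\mathcal{P}_Q})_{\bullet}(f)=(\gamma\mathcal{S}_{\mathcal{P}_Q})_{\bullet}(f)$ is, by construction of $\gamma$, the \emph{unique algebraic map} (in the sense of \cite{D-S}) extending $(\mathcal{S}_{\mathcal{P}_Q})_{\bullet}(f)$, and that $f_{\bullet}$ on $\Omega_Q$ is also an algebraic map, being a composite of ring or additive homomorphisms with a Burnside-ring multiplicative transfer; hence checking the square on $\mathcal{S}_{\mathcal{P}_Q}(X)$ suffices. You need to add this uniqueness argument (or an equivalent one, e.g.\ exhibiting the $\Omega_Q$-transfer as the $\gamma$-extension of its restriction to genuine crossed $G$-sets and invoking functoriality of $\gamma$) before concluding that $\varphi$ is a morphism, and hence an isomorphism, of Tambara functors.
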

\begin{proof}
For each $X\in\Ob(\sG)$, categories $\CrossG$ and $\mathcal{P}_Q\text{-}\sG/X$ in Definition \ref{DefSM} are obviously equivalent, through the natural functor
\begin{eqnarray*}
\CrossG&\rightarrow&\mathcal{P}_Q\text{-}\sG/X\\
\text{object}\colon\qquad\Am&\mapsto&\Am,\\
\text{morphism}\colon\qquad\qquad\qquad\quad\, f&\mapsto& f.
\end{eqnarray*}
Since this functor is compatible with sums and products, it yields a ring isomorphism $\varphi_X\colon\mathcal{T}_{\mathcal{P}_Q}(X)\overset{\cong}{\longrightarrow}\Omega_Q(X)$.
So it remains to show $\varphi=\{\varphi_X\}_{X\in\Ob(\sG)}$ is compatible with the structure morphisms of $\mathcal{T}_{\mathcal{P}_Q}$ and $\Omega_Q$.

Let $f\in\sG(X,Y)$ be any morphism. Obviously $\varphi$ is compatible with $f_+$ and $f^{\ast}$. Thus it suffices to show the compatibility with the multiplicative transfer $f_{\bullet}$.
By the construction of $f_{\bullet}=(\mathcal{T}_{\mathcal{P}_Q})_{\bullet}(f)=(\gamma\mathcal{S}_{\mathcal{P}_Q})_{\bullet}(f)$, it is the only algebraic map (see \cite{D-S} for the definition) which makes the following diagram commutative.
\[
\xy
(-12,6)*+{\mathcal{S}_{\mathcal{P}_Q}(X)}="0";
(12,6)*+{\mathcal{T}_{\mathcal{P}_Q}(X)}="2";
(-12,-6)*+{\mathcal{S}_{\mathcal{P}_Q}(Y)}="4";
(12,-6)*+{\mathcal{T}_{\mathcal{P}_Q}(Y)}="6";
{\ar@{^(->} "0";"2"};
{\ar_{(\mathcal{S}_{\mathcal{P}_Q})_{\bullet}(f)} "0";"4"};
{\ar@{^(->} "4";"6"};
{\ar^{(\mathcal{T}_{\mathcal{P}_Q})_{\bullet}(f)} "2";"6"};
{\ar@{}|\circlearrowright "0";"6"};
\endxy
\]

Since $f_{\bullet}\colon\Omega_Q(X)\rightarrow\Omega_Q(Y)$ is also an algebraic map as is a multiplicative transfer, it suffices to show the commutativity of the following diagram.
\[
\xy
(-10,6)*+{\mathcal{S}_{\mathcal{P}_Q}(X)}="0";
(10,6)*+{\mathcal{T}_{\mathcal{P}_Q}(X)}="2";
(-10,-6)*+{\mathcal{S}_{\mathcal{P}_Q}(Y)}="4";
(10,-6)*+{\mathcal{T}_{\mathcal{P}_Q}(Y)}="6";
(30,6)*+{\Omega_Q(X)}="8";
(30,-6)*+{\Omega_Q(Y)}="10";
{\ar@{^(->} "0";"2"};
{\ar_{(\mathcal{S}_{\mathcal{P}_Q})_{\bullet}(f)} "0";"4"};
{\ar@{^(->} "4";"6"};
{\ar^{\varphi_X}_{\cong} "2";"8"};
{\ar_{\varphi_Y}^{\cong} "6";"10"};
{\ar^{f_{\bullet}} "8";"10"};
\endxy
\]
Take any element $\Am\in\mathcal{S}_{\mathcal{P}_Q}(X)$.
We use the notation in Definition \ref{DefCrossBurn}.
If we let
\[
\xy
(-22,6)*+{X}="0";
(-22,-6)*+{Y}="2";
(-7,6)*+{X\times Q}="4";
(5,6)*+{}="5";
(18,4.7)*+{X\underset{Y}{\times}\Pi_f(X\times Q)}="6";
(16,2)*+{}="7";
(16,-6)*+{\Pi_f(X\times Q)}="8";
(31,6)*+{}="9";
(-6,0)*+{\mathit{exp}}="10";
{\ar_{f} "0";"2"};
{\ar_{p_X} "4";"0"};
{\ar_>>>>>{e} "5";"4"};
{\ar^>>>>{f\ppr} "7";"8"};
{\ar^{\pi} "8";"2"};
(39,6)*+{A\ppr}="12";
(50,6)*+{Z}="14";
(50,-6)*+{\Pi}="16";
(34,0)*+{\mathit{exp}}="18";
{\ar_{q} "12";"9"};
{\ar_>>>>>{\lambda} "14";"12"};
{\ar^>>>>{\rho} "14";"16"};
{\ar^{\varpi} "16";"8"};
\endxy
\]
be two exponential diagrams, where
\[
\xy
(-12,-6)*+{X\times Q}="0";
(-12,6)*+{A}="2";
(4,-6)*+{}="5";
(18,-7)*+{X\underset{Y}{\times}\Pi_f(X\times Q)}="6";
(12,-3.5)*+{}="7";
(12,6)*+{A\ppr}="8";
(0,0)*+{\square}="10";
{\ar_{(p,m_A)} "2";"0"};
{\ar^>>>>>>{e} "5";"0"};
{\ar^>>>>{q} "8";"7"};
{\ar_{e\ppr} "8";"2"};
\endxy
\]
is a pull-back, then by Lemma \ref{LemC}, the following diagram becomes an exponential diagram.
\[
\xy
(-14,6)*+{X}="0";
(-14,-6)*+{Y}="2";
(0,6)*+{A}="4";
(14,6)*+{Z}="6";
(14,-6)*+{\Pi}="8";
(0,0)*+{\mathit{exp}}="10";
{\ar_{f} "0";"2"};
{\ar_{p} "4";"0"};
{\ar_>>>>>{e\ppr\circ\lambda} "6";"4"};
{\ar^>>>>{\rho} "6";"8"};
{\ar^{\pi\circ\varpi} "8";"2"};
\endxy
\]
Thus, composing appropriate isomorphisms, we may assume
\begin{eqnarray*}
\Pi&=&\Pi_f(A)=\Set{(y,\sigma)|%
\begin{array}{l}%
y\in Y, \\
\sigma\colon f^{-1}(y)\rightarrow A \ \, \text{is a map of sets},\\
\ p\circ \sigma=\mathrm{id}_{f^{-1}(y)}%
\end{array}},\\
Z&=&X\times_Y\Pi_f(A).
\end{eqnarray*}

Let $p_1\colon X\times Q\rightarrow Q$ and $p_2\colon Y\times Q\rightarrow Q$ be the projections onto $Q$.
By the definition of $f_{\bullet}\colon\Omega_Q(X)\rightarrow\Omega_Q(Y)$,
we have
\begin{eqnarray*}
f_{\bullet}\circ\varphi_X\Am
&=&(\mu_f)_+f_\bullet\ppr e^{\ast}(A\overset{(p,m_A)}{\longrightarrow}X\times Q)\\
&=&(\mu_f)_+f_\bullet\ppr (A\ppr\overset{q}{\rightarrow}X\times \Pi_f(X\times Q))\\
&=&(\mu_f)_+(\Pi\overset{\varpi}{\rightarrow}\Pi_f(X\times Q))\\
&=&(\Pi\overset{\mu_f\circ\varpi}{\longrightarrow}Y\times Q)\in \Omega(Y\times Q)\ (\cong \Omega_Q(Y)).\\
\end{eqnarray*}
On the other hand,
\begin{eqnarray*}
(\varphi_Y\circ(\mathcal{S}_{\mathcal{P}_Q})_{\bullet})(f)\Am
&=&(\Pi\overset{\pi\circ\varpi}{\longrightarrow}Y,\rho_{\ast}(e\ppr\circ\lambda)^{\ast}(m_A))\\
&=&(\Pi\overset{\pi\circ\varpi}{\longrightarrow}Y,\rho_{\ast}(m_A\circ e\ppr\circ\lambda))\\
&=&(\Pi\overset{\pi\circ\varpi}{\longrightarrow}Y,\rho_{\ast}(p_1\circ e\circ(X\times_Y\varpi)))\ \in\Omega_Q(Y).
\end{eqnarray*}

Thus it remains to show these two elements coincide, under the isomorphism $\Omega(Y\times Q)\cong\Omega_Q(Y)$.
Since $\mathrm{pr}_Y\circ\mu_Y\circ\varpi=\pi\circ\varpi$, it suffices to show
\begin{equation}
p_2\circ\mu_f\circ\varpi=\rho_{\ast}(p_1\circ e\circ(X\times_Y\varpi)).
\label{SuffToShow**}
\end{equation}
Let $(y,\sigma)\in\Pi_f(A)=\Pi$ be any element, where $\sigma\colon f^{-1}(y)\rightarrow A$ is a map of sets. Its image under $\varpi$ can be written as
\[ \varpi(y,\sigma)=(y,\tau), \]
with some map of sets $\tau\colon f^{-1}(y)\rightarrow X\times Q$.
Then we have
\begin{eqnarray*}
p_2\circ\mu_f\circ\varpi(y,\sigma)&=&p_2\circ\mu_f(y,\tau)\\
&=&\prod_{x\in f^{-1}(y)}(p_1(\tau(x))),
\end{eqnarray*}
\begin{eqnarray*}
\rho_{\ast}(p_1\circ e\circ(X\times_Y\varpi))(y,\sigma)&=&\!\prod_{x\in f^{-1}(y)}((p_1\circ e\circ(X\times_Y\varpi))(x,(y,\sigma)))\\
&=&\!\prod_{x\in f^{-1}(y)}((p_1\circ e)(x,(y,\tau))\ =\! \prod_{x\in f^{-1}(y)}(p_1(\tau(x))),
\end{eqnarray*}
and $(\ref{SuffToShow**})$ is satisfied.
\end{proof}

\subsection{Relation with the Witt-Burnside construction}
\label{3-2}

Let $\mathbb{W}_G(R)$ be the Witt-Burnside ring associated to a ring $R$ and a profinite group $G$. For the definition, see \cite{D-S}.
If $G$ is finite, as we assume in this section, then the Witt-Burnside rings are related to Tambara functors as follows.
\begin{fact}[Theorem B, Theorem 15 in \cite{Brun}]\label{FactBrun}
For any finite group $G$, the evaluation at $G/e$
\[ \TamG\rightarrow\GRing\ ; \ T\mapsto T(G/e) \]
has a right adjoint functor $L_G$. Here, $\GRing$ denotes the category of $G$-rings. If $G$ acts trivially on a ring $R$, then for any subgroup $H\le G$, there is an isomorphism
\[ \mathbb{W}_H(R)\cong L_G(R)(G/H). \]
\end{fact}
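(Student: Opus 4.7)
The plan is to proceed in two stages: first construct the right adjoint $L_G$ abstractly, then identify its values on orbits $G/H$ with Witt-Burnside rings by comparing universal properties.

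For the existence of $L_G$, I would appeal to the adjoint functor theorem. The category $\TamG$ is the category of models of an essentially algebraic multi-sorted theory, whose sorts are indexed by isomorphism classes of transitive $G$-sets and whose operations are the restrictions, additive transfers, multiplicative transfers, sums and products, subject to the Tambara axioms; hence $\TamG$ is locally presentable, in particular complete, cocomplete, and well-powered. The evaluation functor
\[
\mathrm{ev}\colon \TamG \longrightarrow \GRing, \quad T\mapsto T(G/e),
\]
is well-defined because the $G$-action on $G/e$ by right multiplication induces, through the contravariant part $T^{\ast}$, a $G$-action by ring automorphisms on $T(G/e)$, and morphisms of Tambara functors restrict to $G$-ring homomorphisms. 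Since limits in $\TamG$ are computed componentwise and the forgetful functor $\GRing \to \Ring$ preserves limits, $\mathrm{ev}$ preserves all small limits; the solution-set condition follows from cardinality bounds on generators of Tambara functors. The adjoint functor theorem then yields the right adjoint $L_G$.

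For the identification $\mathbb{W}_H(R)\cong L_G(R)(G/H)$ under the trivial-action hypothesis, the strategy is to match universal properties. Let $F_{G/H}$ denote the free Tambara functor generated by a single element at $G/H$, which corepresents evaluation at $G/H$. By the adjointness,
\[
L_G(R)(G/H) \;\cong\; \TamG(F_{G/H}, L_G(R)) \;\cong\; \GRing(F_{G/H}(G/e), R).
\]
The $G$-ring $F_{G/H}(G/e)$ is built combinatorially from the exponential diagrams associated to the canonical projection $G/e \to G/H$; unwinding the construction exhibits it as a polynomial $G$-ring whose generators correspond to the classical ``Witt components.'' On the other hand, $\mathbb{W}_H(R)$ is characterized as corepresenting the functor of $H$-typical Witt components on trivial-action rings. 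Naturality and matching of the two universal properties, applied levelwise on $\GRing$ with trivial action, yields the desired isomorphism.

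The main obstacle will be the second stage: reconciling the exponential-diagram machinery underlying Tambara multiplicative norms with the ghost-map/Witt-polynomial description of $\mathbb{W}_H(R)$. This is precisely the substantive content of Brun's Theorem 15 in \cite{Brun}, and requires a careful analysis of how the multiplicative transfer along $G/e \to G/H$ corresponds to Teichmüller lifts and Frobenius-type operations on Witt vectors; the bookkeeping with exponential diagrams, though mechanical, is where the argument is most delicate.
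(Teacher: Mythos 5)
This statement is not proved in the paper at all: it is imported verbatim as a Fact from Brun (Theorem B and Theorem 15 of \cite{Brun}), so the only meaningful comparison is with Brun's original argument, and against that standard your proposal has a genuine gap on both halves. First, the existence argument runs the adjoint functor theorem in the wrong direction. You verify that $\TamG$ is complete, that limits are computed levelwise, that evaluation at $G/e$ preserves limits, and that a solution-set condition holds; those hypotheses produce a \emph{left} adjoint to the evaluation functor, not the right adjoint $L_G$ asserted in the statement. To obtain a right adjoint you must show that $T\mapsto T(G/e)$ preserves \emph{colimits} (or argue via local presentability plus colimit-preservation), and this is exactly the nontrivial point: colimits in $\TamG$ are not computed levelwise (already coproducts and coequalizers of Tambara functors are genuinely complicated), so colimit-preservation by evaluation at the free orbit is a substantive theorem, not a formal remark. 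Your sketch never addresses it.

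Second, the identification $\mathbb{W}_H(R)\cong L_G(R)(G/H)$ is not actually argued. The adjunction chain $L_G(R)(G/H)\cong\TamG(F_{G/H},L_G(R))\cong\GRing(F_{G/H}(G/e),R)$ is fine as far as it goes (modulo checking that the ring structure, not just the underlying set, is captured), but the comparison of $\GRing(F_{G/H}(G/e),R)$ with the Dress--Siebeneicher ring $\mathbb{W}_H(R)$ --- i.e.\ reconciling the multiplicative transfers and exponential diagrams with the ghost-map/Witt-polynomial universal property --- is precisely the content of Brun's Theorem 15, and you explicitly defer that step to Brun. Since the statement being proved \emph{is} Brun's theorem, this is circular; as written the proposal is a plan that reduces the claim to itself rather than a proof. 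A complete argument would have to (i) establish colimit-preservation of evaluation at $G/e$ (or otherwise construct $L_G$ directly, as Brun does), and (ii) carry out the explicit comparison of $F_{G/H}(G/e)$, or of $L_G(R)$ itself, with the Witt-Burnside construction for trivial $G$-rings.
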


\begin{rem}
If $T$ is a Tambara functor on $G$, then $T(G/e)$ carries a natural $G$-ring structure induced from $T^{\ast}$ of translations. We denote its $G$-fixed part by $T(G/e)^G$. This gives a functor to $\Ring$
\[ \ev\colon\TamG\rightarrow\Ring\ ;\ T\mapsto T(G/e)^G ,\]
which we call the \lq $G$-invariant evaluation' functor.
\end{rem}

Recall that the functor taking the $G$-fixed part
\[ \GRing\rightarrow\Ring ;\ R\mapsto R^G \]
is left adjoint to the natural functor
\[ \Ring\rightarrow\GRing\ ;\ R\mapsto R\triv, \]
which endows a ring with the trivial $G$-action.
Here, $R\triv$ denotes a ring $R$ with the trivial $G$-action.
Thus by Brun's theorem, we obtain:
\begin{cor}
$G$-invariant evaluation functor $\ev\colon\TamG\rightarrow\Ring$ has a left adjoint functor, which we denote by
\[ \mathbb{W}\colon\Ring\rightarrow\TamG, \]
satisfying
$(\mathbb{W}(R))(G/H)\cong\mathbb{W}_H(R)$
for any ring $R$ and for each $H\le G$.
\end{cor}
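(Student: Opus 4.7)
The plan is to define $\mathbb{W}\colon\Ring\rightarrow\TamG$ as the composite $\mathbb{W}(R):=L_G(R\triv)$, where $L_G\colon\GRing\rightarrow\TamG$ is the functor supplied by Fact \ref{FactBrun} and $(-)\triv\colon\Ring\rightarrow\GRing$ equips a ring with the trivial $G$-action. Both ingredients are functorial, so $\mathbb{W}$ is automatically defined on morphisms. The identification $\mathbb{W}(R)(G/H)=L_G(R\triv)(G/H)\cong\mathbb{W}_H(R)$ is then an immediate consequence of the second half of Fact \ref{FactBrun} applied to $R\triv$, whose underlying ring is $R$ and on which $G$ acts trivially by construction.

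For the adjointness $\mathbb{W}\dashv\ev$, I would simply assemble the two component adjunctions into a chain of natural bijections: for $R\in\Ob(\Ring)$ and $T\in\Ob(\TamG)$,
\[\TamG(\mathbb{W}(R),T)=\TamG(L_G(R\triv),T)\cong\GRing(R\triv,T(G/e))\cong\Ring(R,T(G/e)^G)=\Ring(R,\ev(T)),\]
where the first isomorphism uses the adjunction from Fact \ref{FactBrun} and the second uses the adjunction between $(-)\triv$ and $(-)^G$ recalled immediately above the corollary. Naturality in $R$ and $T$ is inherited from the naturality of the two component adjunctions, so this chain indeed exhibits $\mathbb{W}$ as left adjoint to $\ev$.

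The argument is entirely formal---an instance of the general principle that composable adjoints compose---so there is no substantive obstacle. The only care needed is bookkeeping: keeping the direction of each adjunction straight so that the chain of $\mathrm{Hom}$-sets threads correctly from $\TamG$ through $\GRing$ to $\Ring$. Once this is arranged, the proof is essentially one line, in the same spirit as the proof of Corollary \ref{CorOfThm2} above.
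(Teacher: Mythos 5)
Your construction $\mathbb{W}:=L_G\circ(-)\triv$ and the composition-of-adjunctions argument are exactly the route the paper intends (the text before the corollary does nothing more than recall the $(-)\triv$/$(-)^G$ adjunction and then compose it with Brun's), and your treatment of the Witt identification and of naturality is fine. The one point you must confront is the handedness of the two component adjunctions, both of which you use in the direction opposite to the paper's literal wording. Fact \ref{FactBrun} as printed says $L_G$ is a \emph{right} adjoint of evaluation at $G/e$, which supplies $\TamG(T,L_G(S))\cong\GRing(T(G/e),S)$ and \emph{not} your first isomorphism $\TamG(L_G(R\triv),T)\cong\GRing(R\triv,T(G/e))$; likewise the sentence recalled just above the corollary asserts that $(-)^G$ is left adjoint to $(-)\triv$, whereas your second isomorphism uses $(-)\triv\dashv(-)^G$. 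Taken verbatim, the printed ingredients would compose to make $\mathbb{W}$ a \emph{right} adjoint of $\ev$, so your chain does not follow from the statements as quoted.

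That said, your handedness is the correct one, and it is the only one compatible with the rest of the paper. The true adjunction is $(-)\triv\dashv(-)^G$ (a $G$-equivariant ring map out of a ring with trivial action lands in the fixed subring; the left adjoint of $(-)\triv$ would be coinvariants, not invariants), and Brun's functor must be a \emph{left} adjoint of evaluation: if it were a right adjoint, then for $G=C_2$ the fixed-point Tambara functor of $\mathbb{F}_2$ with trivial action would admit a morphism to $L_G(\mathbb{F}_2)$ (since $\GRing(\mathbb{F}_2,\mathbb{F}_2)\neq\emptyset$), forcing a unital ring homomorphism $\mathbb{F}_2\rightarrow L_G(\mathbb{F}_2)(G/C_2)\cong\mathbb{W}_{C_2}(\mathbb{F}_2)\cong\mathbb{Z}/4$, which does not exist. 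Only the left-adjoint reading is consistent with $L_G(R\triv)(G/H)\cong\mathbb{W}_H(R)$, with the corollary as stated ($\mathbb{W}\dashv\ev$), and with the use of $\mathbb{W}\dashv\ev$ in the proof of Theorem \ref{Thm3}. So your proof is the intended one; just invoke the left-adjoint form of Brun's theorem explicitly (and note the corrected direction of the $(-)\triv$/$(-)^G$ adjunction) instead of citing the two recalled statements verbatim.
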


Remark that the category of semi-Mackey functors also admits the \lq$G$-invariant evaluation' functor
\[ \ev\colon\SMackG\rightarrow\Mon\ ;\ M\mapsto M(G/e)^G, \]
compatible with that on $\TamG$: 
\begin{equation}
\xy
(-36,6)*+{T=(T^{\ast},T_+,T_{\bullet})}="-10";
(-36,-6)*+{(T^{\ast},T_{\bullet})}="-12";
(-14,6)*+{\TamG}="0";
(14,6)*+{\Ring}="2";
(-14,-6)*+{\SMackG}="4";
(14,-6)*+{\Mon}="6";
{\ar^{\ev} "0";"2"};
{\ar_{\mu\circ\mathcal{U}} "0";"4"};
{\ar^<<<{\text{multiplicative}} "2";"6"};
{\ar^>>>{\text{part}} "2";"6"};
{\ar_{\ev} "4";"6"};
{\ar@{}|\circlearrowright "0";"6"};
{\ar@{|->} "-10";"-12"};
\endxy
\label{diagEV}
\end{equation}

We construct the left adjoint functor of $\ev\colon\SMackG\rightarrow\Mon$ in the following. Remark that (cf. \cite{Bouc}), to give a semi-Mackey functor $M$ on $G$ is equivalent to give a data
\begin{eqnarray*}
&M(G/H)\in\Ob(\Mon)&\\
&r^H_K\in\Mon(M(G/H),M(G/K))&(\text{restriction})\\
&t^H_K\in\Mon(M(G/K),M(G/H))&(\text{transfer})\\
&c_{g,H}\in\Mon(M(G/H),M(G/{}^g\! H))&(\text{conjugation map})
\end{eqnarray*}
for each $K\le H\le G$ and $g\in G$, satisfying
\begin{eqnarray*}
&t^H_K\circ t^K_L=t^H_L,\ r^K_L\circ r^H_K=r^H_L&({}^{\forall}L\le{}^{\forall}K\le{}^{\forall}H\le G)\\
&c_{g_2,{}^{g_1}\! H}\circ c_{g_1,H}=c_{g_2g_1,H}&({}^{\forall}g_1,g_2\in G, {}^{\forall}H\le G)\\
&c_{g,H}\circ t^H_K=t^{{}^g\! H}_{{}^g\! K}\circ c_{g,K}&({}^{\forall}g\in G, {}^{\forall}K\le{}^{\forall}H\le G)\\
&c_{g,K}\circ r^H_K=r^{{}^g\! H}_{{}^g\!K}\circ c_{g,H}&({}^{\forall}g\in G, {}^{\forall}K\le{}^{\forall}H\le G)
\end{eqnarray*}
and the Mackey condition
\begin{equation}
r^H_L\circ t^H_K=\!\!\displaystyle{\sum_{h\in L\backslash H/K}}t^L_{L\cap{}^h\! K}\circ c_{h,L^h\cap K}\circ r^K_{L^h\cap K} \ \ \ ({}^{\forall}K\le{}^{\forall}H\le G).
\label{MackCondBouc}
\end{equation}

In this description, a morphism $\varphi\colon M\rightarrow N$ between semi-Mackey functors $M$ and $N$ corresponds to a family $\varphi=\{\varphi_H\}_{H\le G}$ of monoid homomorphisms
\[ \varphi_H\colon M(G/H)\rightarrow N(G/H) \]
compatible with conjugations, restrictions and transfers.

\begin{dfn}\label{DefConst}
Let $Q$ be a monoid. Define a Mackey functor $\mathcal{L}_Q$ by
\begin{eqnarray*}
\mathcal{L}_Q(G/H)=Q&({}^{\forall} H\le G)\\
r^H_K=[H:K]\colon Q\rightarrow Q&({}^{\forall}K\le {}^{\forall}H\le G)\\
t^H_K=\mathrm{id}_Q\colon Q\rightarrow Q&({}^{\forall}K\le {}^{\forall}H\le G)\\
c_{g,H}=\mathrm{id}_Q\colon Q\rightarrow Q&({}^{\forall}g\in G,{}^{\forall}H\le G)
\end{eqnarray*}
where $[H:K]$ denotes the multiplication by the index $[H:K]$.
\end{dfn}

\begin{rem}
Let $\mathcal{P}_Q$ be the fixed point functor, where $Q$ is regarded as a $G$-monoid with the trivial $G$-action.
Then $\mathcal{L}_Q(G/H)=Q=\mathcal{P}_Q(G/H)$ for each $H\le G$, and $\mathcal{L}_Q$ is the Mackey functor whose restrictions and transfers are reversed from $\mathcal{P}_Q$.
\end{rem}

\begin{claim}\label{ClaimConst}
The correspondence $Q\mapsto\mathcal{L}_Q$ forms a functor
\[ \mathcal{L}\colon\Mon\rightarrow\SMackG, \]
which is left adjoint to $\ev$.
\end{claim}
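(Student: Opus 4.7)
The plan is to verify first that $\mathcal{L}_Q$ is a well-defined semi-Mackey functor, then that $Q \mapsto \mathcal{L}_Q$ is functorial, and finally to exhibit a natural bijection
\[ \SMackG(\mathcal{L}_Q, M) \cong \Mon(Q, M(G/e)^G) \]
for every $M \in \Ob(\SMackG)$. Well-definedness of $\mathcal{L}_Q$ reduces to checking the Mackey relation (\ref{MackCondBouc}). Since all transfers and conjugations on $\mathcal{L}_Q$ are identities while each restriction $r^H_K$ acts on $Q$ as the $[H:K]$-th power map, both sides of (\ref{MackCondBouc}) collapse to power maps and the relation reduces to the classical double-coset identity
\[ [H:L] = \sum_{h \in L \backslash H / K} [K : L^h \cap K]. \]
Functoriality is then immediate, since a monoid homomorphism $Q \to Q'$ induces a morphism of semi-Mackey functors whose component at every $G/H$ is the same underlying map.

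For the adjunction, I would define the two assignments explicitly. Given $\psi \in \SMackG(\mathcal{L}_Q, M)$, compatibility with conjugations — which are identities on $\mathcal{L}_Q(G/e) = Q$ — forces $\psi_e(q) \in M(G/e)^G$, so I set $\Phi(\psi) := \psi_e$. Conversely, for $\varphi \in \Mon(Q, M(G/e)^G)$, define $\Psi(\varphi)_H \colon Q \to M(G/H)$ by
\[ \Psi(\varphi)_H(q) := t^H_e(\varphi(q)). \]
Compatibility of $\Psi(\varphi)$ with transfers is immediate from $t^H_K \circ t^K_e = t^H_e$, and compatibility with conjugations follows from $c_{g,H} \circ t^H_e = t^{{}^g H}_e \circ c_{g,e}$ combined with the $G$-invariance $c_{g,e}(\varphi(q)) = \varphi(q)$.

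The subtle step — and the one I expect to require the most care — is compatibility with restrictions, namely
\[ r^H_K \circ t^H_e(\varphi(q)) = t^K_e\bigl(\varphi(q^{[H:K]})\bigr). \]
For this I would apply (\ref{MackCondBouc}) to $r^H_K \circ t^H_e$: the double-coset set $K \backslash H / \{e\}$ has $[H:K]$ elements, and using the $G$-invariance of $\varphi(q)$ each summand collapses to $t^K_e(\varphi(q))$. The monoid operation in $M(G/K)$ then combines them to $t^K_e(\varphi(q))^{[H:K]}$, and since both $\varphi$ and $t^K_e$ are monoid homomorphisms this equals $t^K_e(\varphi(q^{[H:K]})) = \Psi(\varphi)_K(r^H_K(q))$, as required.

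Finally, mutual inversion of $\Phi$ and $\Psi$ is clean: $\Phi \circ \Psi = \mathrm{id}$ since $t^e_e = \mathrm{id}_Q$, while $\Psi \circ \Phi = \mathrm{id}$ follows from naturality of any $\psi \in \SMackG(\mathcal{L}_Q, M)$ with respect to $t^H_e = \mathrm{id}_Q$ on $\mathcal{L}_Q$, which forces $\psi_H(q) = t^H_e(\psi_e(q))$. Naturality of these bijections in $Q$ and $M$ is visible from the construction, which completes the adjunction.
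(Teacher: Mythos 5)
Your proposal is correct and follows essentially the same route as the paper: the same pair of assignments ($\psi\mapsto\psi_{G/e}$, landing in $M(G/e)^G$ by triviality of the conjugations on $\mathcal{L}_Q$, and $\varphi\mapsto\{t^H_e\circ\varphi\}_{H\le G}$), with the crucial restriction-compatibility handled by the same Mackey double-coset computation $r^H_K\circ t^H_e(x)=[H:K]\,t^K_e(x)$ for $G$-invariant $x$. Your explicit check that $\mathcal{L}_Q$ satisfies the Mackey condition (via $[H:L]=\sum_{h\in L\backslash H/K}[K:L^h\cap K]$) and your spelled-out mutual-inversion argument merely fill in steps the paper leaves implicit, so there is no substantive difference in approach.
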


If Claim \ref{ClaimConst} is proven, we obtain the following.
\begin{thm}\label{Thm3}
For any finite group $G$, there is an isomorphism of functors
\begin{equation}
\mathbb{W}\circ\mathbb{Z}[-]\cong\mathcal{T}\circ\mathcal{L}.
\qquad
\xy
(-12,6)*+{\Mon}="0";
(12,6)*+{\SMackG}="2";
(-12,-6)*+{\Ring}="4";
(12,-6)*+{\TamG}="6";
{\ar^<<<<<<{\mathcal{L}} "0";"2"};
{\ar_{\mathbb{Z}[-]} "0";"4"};
{\ar^{\mathcal{T}} "2";"6"};
{\ar_<<<<<<{\mathbb{W}} "4";"6"};
{\ar@{}|\circlearrowright "0";"6"};
\endxy
\label{diagWT}
\end{equation}
\end{thm}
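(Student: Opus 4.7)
The plan is to apply uniqueness of left adjoints. Both $\mathcal{T}\circ\mathcal{L}$ and $\mathbb{W}\circ\mathbb{Z}[-]$ are compositions of left adjoints, hence themselves left adjoints; I will check that they are left adjoint to one and the same functor $\TamG\to\Mon$, and then the natural isomorphism in (\ref{diagWT}) will follow formally.

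First I would identify the right adjoint of $\mathcal{T}\circ\mathcal{L}$. Claim \ref{ClaimConst} supplies $\mathcal{L}\dashv \ev$ on the $\Mon\rightleftarrows\SMackG$ side, and Corollary \ref{CorOfThm2} supplies $\mathcal{T}\dashv \mu\circ\mathcal{U}$ on the $\SMackG\rightleftarrows\TamG$ side. Composing these adjunctions yields that $\mathcal{T}\circ\mathcal{L}$ is left adjoint to $\ev\circ\mu\circ\mathcal{U}\colon\TamG\to\Mon$, which sends $T$ to $(T^{\mu})(G/e)^G$, i.e.\ to the underlying multiplicative monoid of the ring $T(G/e)^G$.

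Next I would identify the right adjoint of $\mathbb{W}\circ\mathbb{Z}[-]$. The monoid-ring functor $\mathbb{Z}[-]\colon\Mon\to\Ring$ is classically left adjoint to the forgetful functor that sends a ring to its multiplicative monoid. Combined with the corollary of Fact \ref{FactBrun}, which gives $\mathbb{W}\dashv\ev$ on $\Ring\rightleftarrows\TamG$, this yields that $\mathbb{W}\circ\mathbb{Z}[-]$ is left adjoint to the composite $\TamG\to\Ring\to\Mon$ that also sends $T$ to the multiplicative monoid of $T(G/e)^G$. The commutativity of diagram (\ref{diagEV}) asserts precisely that these two right adjoints coincide as functors $\TamG\to\Mon$, and uniqueness of left adjoints up to canonical natural isomorphism then produces the desired natural isomorphism $\mathbb{W}\circ\mathbb{Z}[-]\cong\mathcal{T}\circ\mathcal{L}$.

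The principal obstacle is not Theorem \ref{Thm3} itself, which becomes essentially formal once all the adjunctions are in place, but rather the preparatory Claim \ref{ClaimConst}: one must verify that Definition \ref{DefConst} actually yields a semi-Mackey functor, the nontrivial point being the Mackey condition (\ref{MackCondBouc}), which after unwinding reduces to the double-coset index identity $[H:K]=\sum_{h\in L\backslash H/K}[L:L\cap{}^h\! K]$, and one must also verify that the tautological monoid map $Q=\mathcal{L}_Q(G/e)^G\to M(G/e)^G$ produces the adjunction bijection $\SMackG(\mathcal{L}_Q,M)\xrightarrow{\cong}\Mon(Q,M(G/e)^G)$ (naturality in $M$ and compatibility with restrictions, transfers and conjugations on $\mathcal{L}_Q$ being forced by the definitions).
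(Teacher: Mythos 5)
Your proposal is correct and follows essentially the same route as the paper: the paper's proof of Theorem \ref{Thm3} likewise observes that each functor in $(\ref{diagWT})$ is left adjoint to the corresponding functor in $(\ref{diagEV})$ (via Claim \ref{ClaimConst}, Corollary \ref{CorOfThm2}, the corollary of Fact \ref{FactBrun}, and the classical adjunction for $\mathbb{Z}[-]$) and concludes by the commutativity of $(\ref{diagEV})$ and uniqueness of left adjoints, with the real work indeed residing in Claim \ref{ClaimConst}. One small slip in your side remark: since in $\mathcal{L}_Q$ the \emph{restrictions} are multiplication by the index and the transfers are identities, the Mackey condition $(\ref{MackCondBouc})$ unwinds to $[H:L]=\sum_{h\in L\backslash H/K}[K:L^h\cap K]$ rather than the identity you wrote (both are instances of the same double-coset count, so the verification goes through all the same).
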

\begin{proof}
Suppose Claim \ref{ClaimConst} is shown.
Then, each functor in $(\ref{diagWT})$ is left adjoint to the corresponding functor in $(\ref{diagEV})$.
Thus Theorem \ref{Thm3} follows from the commutativity of $(\ref{diagEV})$ and the uniqueness of the left adjoint functor.
\end{proof}
\begin{proof}[Proof of Claim \ref{ClaimConst}]
Let $Q_1$, $Q_2$ be two monoids. To each monoid homomorphism $\theta\colon Q_1\rightarrow Q_2$, we can naturally associate a morphism of semi-Mackey functors $\mathcal{L}_{\theta}\colon\mathcal{L}_{Q_1}\rightarrow\mathcal{L}_{Q_2}$ by
\[ \mathcal{L}_{\theta}(G/H)=\theta\colon\mathcal{L}_{Q_1}(G/H)\rightarrow\mathcal{L}_{Q_2}(G/H)\ \ \ ({}^{\forall}H\le G), \]
and obtain a functor $\mathcal{L}\colon\Mon\rightarrow\SMackG$.

To show the adjointness, let $Q\in\Ob(\Mon)$ and $M\in\Ob(\SMackG)$ be any pair of objects.
It suffices to construct natural maps
\begin{eqnarray*}
\Phi\colon&\Mon(Q,M(G/e)^G)\rightarrow\SMackG(\mathcal{L}_Q,M)\\
\Theta\colon&\SMackG(\mathcal{L}_Q,M)\rightarrow\Mon(Q,M(G/e)^G)
\end{eqnarray*}
which are inverses to each other.

First we define $\Theta$.
Let $\varphi\in\SMackG(\mathcal{L}_Q,M)$ be any morphism.
Since $G$ acts trivially on $\mathcal{L}_Q(G/H)=Q$, the component of $\varphi$ at $G/e$
\[ \varphi_{G/e}\colon\mathcal{L}_Q(G/e)\rightarrow M(G/e) \]
factors through $M(G/e)^G$. This gives a monoid homomorphism
\[ \Theta(\varphi)\colon Q\rightarrow M(G/e)^G. \]

Second, we define $\Phi$.
Let $\theta\in\Mon(Q,M(G/e)^G)$ be any morphism. We define $\Phi(\theta)=\{\Phi(\theta)_H\}_{H\le G}$ by the composition
\[ \Phi(\theta)_H=(\mathcal{L}_Q(G/H)=Q\overset{\theta}{\rightarrow}M(G/e)^G\hookrightarrow M(G/e)\overset{t^H_e}{\rightarrow}M(G/H)), \]
for each $H\le G$.

\begin{claim}\label{ClaimAdj}
$\Phi(\theta)=\{\Phi(\theta)_H\}_{H\le G}\in\SMackG(\mathcal{L}_Q,M)$.
\end{claim}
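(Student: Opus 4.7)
Proof plan for Claim \ref{ClaimAdj}.

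The task is to check that the family $\Phi(\theta)=\{\Phi(\theta)_H\}_{H\le G}$ commutes with the three types of structural maps (conjugations, transfers, restrictions) used in the Bouc-style description of semi-Mackey functors recalled just before Definition \ref{DefConst}. Write $\iota\colon M(G/e)^G\hookrightarrow M(G/e)$ for the inclusion, so that by definition $\Phi(\theta)_H=t^H_e\circ\iota\circ\theta$. Each component is a composition of monoid homomorphisms, hence a monoid homomorphism, so only naturality needs proof.

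\emph{Conjugations.} In $\mathcal{L}_Q$ every conjugation map is $\mathrm{id}_Q$, so I must show $c_{g,H}^M\circ\Phi(\theta)_H=\Phi(\theta)_{{}^g\!H}$ for all $g\in G$, $H\le G$. Using $c_{g,H}^M\circ t^H_e=t^{{}^g\!H}_e\circ c_{g,e}^M$ and the fact that $\iota\circ\theta$ lands in the $G$-fixed part, whence $c_{g,e}^M\circ\iota\circ\theta=\iota\circ\theta$, the two sides agree.

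\emph{Transfers.} In $\mathcal{L}_Q$ every transfer is $\mathrm{id}_Q$, so I must check $t^H_K{}^M\circ\Phi(\theta)_K=\Phi(\theta)_H$ for $K\le H$. This is immediate from transitivity $t^H_K\circ t^K_e=t^H_e$.

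\emph{Restrictions.} Here $r^H_K$ in $\mathcal{L}_Q$ is multiplication by the index $[H:K]$ on $Q$, i.e.\ the monoid endomorphism $q\mapsto q^{[H:K]}$. I need to check $r^H_K{}^M\circ\Phi(\theta)_H=\Phi(\theta)_K\circ[H:K]$. Apply the Mackey relation $(\ref{MackCondBouc})$ with $K$ and $L=K$ and the trivial subgroup in place of $K$ there (i.e.\ compute $r^H_K\circ t^H_e$). Since ${}^h e=e$ and $e^h=e$, the double coset sum reduces to a sum indexed by $K\backslash H$, and each summand is $t^K_e\circ c_{h,e}^M\circ\iota\circ\theta$, which by $G$-invariance of $\iota\circ\theta$ equals $t^K_e\circ\iota\circ\theta=\Phi(\theta)_K$. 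Summing the $[H:K]$ equal summands in the commutative monoid $M(G/K)$ gives $\Phi(\theta)_K(q)^{[H:K]}=\Phi(\theta)_K\bigl(q^{[H:K]}\bigr)$, which is exactly what is required. This is the one step that is not purely formal, because it is where the crucial index-multiplication in $\mathcal{L}_Q$ is matched with the Mackey combinatorics in $M$; I regard it as the main (and only) obstacle of the verification.

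Once these three compatibilities are in hand, $\Phi(\theta)\in\SMackG(\mathcal{L}_Q,M)$, which is the statement of the claim. The subsequent verification that $\Phi$ and $\Theta$ are mutually inverse and natural in $Q,M$ is then routine: $\Theta\Phi(\theta)=\theta$ because $t^e_e=\mathrm{id}$, and $\Phi\Theta(\varphi)_H=\varphi_H$ because any morphism of semi-Mackey functors commutes with $t^H_e$ and because $\varphi_{G/e}$ automatically factors through $M(G/e)^G$ by compatibility with conjugations.
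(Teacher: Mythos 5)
Your proposal is correct and follows essentially the same route as the paper: it verifies compatibility with conjugations, transfers and restrictions in the Bouc-style description, with the only nontrivial step being the Mackey-formula computation $r^H_K\circ t^H_e(x)=\sum_{h\in K\backslash H}t^K_e\circ c_{h,e}(x)=[H:K]\,t^K_e(x)$ for $G$-fixed $x$, exactly as in the paper's proof.
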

If Claim \ref{ClaimAdj} is shown, then we can easily show
\begin{eqnarray*}
\Phi\circ\Theta(\varphi)=\varphi&({}^{\forall}\varphi\in\SMackG(\mathcal{L}_Q,M)),\\
\Theta\circ\Phi(\theta)=\theta&({}^{\forall}\theta\in\Mon(Q,M(G/e)^G)),
\end{eqnarray*}
and thus obtain the desired adjoint isomorphism
\[ \Mon(Q,M(G/e)^G)\cong\SMackG(\mathcal{L}_Q,M). \]

\smallskip

Thus it remains to show Claim \ref{ClaimAdj}, namely, to show the compatibility of $\{\Phi(\theta)_H \}_{H\le G}$ with the structure morphisms of semi-Mackey functors $\mathcal{L}_Q$ and $M$.

\begin{enumerate}
\item[{\rm (i)}] compatibility with conjugation maps\\
For any $g\in G$ and $H\le G$, we have $c_{g,H}\circ\Phi(\theta)_H=\Phi(\theta)_{({}^g\! H)}\circ c_{g,H}$. This follows from the following commutative diagram.
Remark $c_{g,e}$ is the identity on $M(G/e)^G$.
\[
\xy
(-36,6)*+{\mathcal{L}_Q(G/H)}="0";
(-20,6)*+{Q}="2";
(0,6)*+{M(G/e)^G}="4";
(22,6)*+{M(G/e)}="6";
(46,6)*+{M(G/H)}="8";
(-36,-6)*+{\mathcal{L}_Q(G/{}^g\! H)}="10";
(-20,-6)*+{Q}="12";
(0,-6)*+{M(G/e)^G}="14";
(22,-6)*+{M(G/e)}="16";
(46,-6)*+{M(G/{}^g\! H)}="18";
{\ar@{=} "0";"2"};
{\ar^<<<<<{\theta} "2";"4"};
{\ar@{^(->} "4";"6"};
{\ar^{t^H_e} "6";"8"};
{\ar@{=} "10";"12"};
{\ar_<<<<<{\theta} "12";"14"};
{\ar@{^(->} "14";"16"};
{\ar_{t^{{}^g\! H}_e} "16";"18"};
{\ar_{c_{g,H}}"0";"10"};
{\ar^{\mathrm{id}}"2";"12"};
{\ar_{\mathrm{id}}"4";"14"};
{\ar_{c_{g,e}} "6";"16"};
{\ar^{c_{g,H}} "8";"18"};
{\ar@{}|\circlearrowright "0";"12"};
{\ar@{}|\circlearrowright "2";"14"};
{\ar@{}|\circlearrowright "4";"16"};
{\ar@{}|\circlearrowright "6";"18"};
\endxy
\]

\item[{\rm (ii)}] compatibility with inductions\\
For any sequence of subgroups $K\le H\le G$, we have $t^H_K\circ\Phi(\theta)_K=\Phi(\theta)_H\circ t^H_K$. This immediately follows from the commutativity of the following diagram.
\[
\xy
(-36,6)*+{\mathcal{L}_Q(G/K)}="0";
(-20,6)*+{Q}="2";
(0,6)*+{M(G/e)^G}="4";
(22,6)*+{M(G/e)}="6";
(46,6)*+{M(G/K)}="8";
(-36,-6)*+{\mathcal{L}_Q(G/H)}="10";
(-20,-6)*+{Q}="12";
(0,-6)*+{M(G/e)^G}="14";
(22,-6)*+{M(G/e)}="16";
(46,-6)*+{M(G/H)}="18";
{\ar@{=} "0";"2"};
{\ar^<<<<<{\theta} "2";"4"};
{\ar@{^(->} "4";"6"};
{\ar^{t^K_e} "6";"8"};
{\ar@{=} "10";"12"};
{\ar_<<<<<{\theta} "12";"14"};
{\ar@{^(->} "14";"16"};
{\ar_{t^H_e} "16";"18"};
{\ar_{t^H_K}"0";"10"};
{\ar^{\mathrm{id}}"2";"12"};
{\ar_{\mathrm{id}} "6";"16"};
{\ar^{t^H_K} "8";"18"};
{\ar@{}|\circlearrowright "0";"12"};
{\ar@{}|\circlearrowright "2";"16"};
{\ar@{}|\circlearrowright "6";"18"};
\endxy
\]

\item[{\rm (iii)}] compatibility with restrictions\\
For any sequence of subgroups $K\le H\le G$, we have $r^H_K\circ\Phi(\theta)_H=\Phi(\theta)_K\circ r^H_K$. This follows from the commutativity of the following diagram.
\[
\xy
(-36,6)*+{\mathcal{L}_Q(G/H)}="0";
(-20,6)*+{Q}="2";
(0,6)*+{M(G/e)^G}="4";
(22,6)*+{M(G/e)}="6";
(46,6)*+{M(G/H)}="8";
(-36,-6)*+{\mathcal{L}_Q(G/K)}="10";
(-20,-6)*+{Q}="12";
(0,-6)*+{M(G/e)^G}="14";
(22,-6)*+{M(G/e)}="16";
(46,-6)*+{M(G/K)}="18";
{\ar@{=} "0";"2"};
{\ar^<<<<<{\theta} "2";"4"};
{\ar@{^(->} "4";"6"};
{\ar^{t^H_e} "6";"8"};
{\ar@{=} "10";"12"};
{\ar_<<<<<{\theta} "12";"14"};
{\ar@{^(->} "14";"16"};
{\ar_{t^K_e} "16";"18"};
{\ar_{r^H_K}"0";"10"};
{\ar^{[H:K]}"2";"12"};
{\ar^{[H:K]}"4";"14"};
{\ar^{r^H_K} "8";"18"};
{\ar@{}|\circlearrowright "0";"12"};
{\ar@{}|\circlearrowright "2";"14"};
{\ar@{}|\circlearrowright "4";"18"};
\endxy
\]
(Remark that, from the Mackey condition, we have
\begin{eqnarray*}
r^H_K\circ t^H_e(x)&=&\displaystyle{\sum_{h\in K\backslash H}}t^K_e\circ c_{h,e}(x)\\
&=&\displaystyle{\sum_{h\in K\backslash H}}t^K_e(x)\ =\ [H:K]\ t^K_e(x)
\end{eqnarray*}
for any $x\in M(G/e)^G$.)
\end{enumerate}
\end{proof}

Theorem \ref{Thm3} also gives a functorial enhancement of Elliott's isomorphism.
For any monoid $Q$ and profinite group $G$, Elliott defined $\mathbf{B}_Q(G)$ to be the Grothendieck ring of the category of $G$-{\it strings} over $Q$. The category of $G$-strings over $Q$ is defined as follows.
\begin{dfn}[\S\S\,\! 2.2 in \cite{Elliott}]\label{DefGStr}
Let $G$ be a profinite group, and let $Q$ be an object in $\Mon$. A $G$-{\it string} over $Q$ is a pair $(A,m_A)$ of
\begin{itemize}
\item[{\rm (a)}] an {\it almost finite} $G$-set $A$,
\item[{\rm (b)}] a map of sets $m_A\colon A\rightarrow Q$, which is constant on $G$-orbits in $A$.
\end{itemize}

The category of $G$-strings is denoted by $G\text{-}\mathit{String}/Q$.
For the general definition of an almost finite $G$-set, see \cite{Elliott}.
Since we are now assuming $G$ is finite, it is nothing other than a finite $G$-set. For each pair of objects $(A,m_A)$ and $(B,m_B)$, their sums and products are defined by
\begin{eqnarray*}
(A,m_A)\amalg(B,m_B)&=&(A\amalg B,m_A\cup m_B),\\
(A,m_A)\times(B,m_B)&=&(A\times B,m_A\ast m_B),
\end{eqnarray*}
where $m_A\ast m_B$ is defined by $(m_A\ast m_B)(a,b)=m_A(a)\cdot m_B(b)$ for any $(a,b)\in A\times B$.
\end{dfn}

In the context of the Witt-Burnside construction, Elliott showed the following.
\begin{fact}[Theorem 1.7 in \cite{Elliott}]\label{FactElliott}
For any profinite group $G$ and any monoid $Q$, there is an isomorphism of rings $\mathbf{B}_Q(G)\cong\mathbb{W}_G(\mathbb{Z}[Q])$.
\end{fact}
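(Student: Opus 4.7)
The plan is to derive this isomorphism, in the finite case under consideration in this section, as a consequence of Theorem \ref{Thm3} and Brun's Fact \ref{FactBrun}, by identifying the semi-ring $\mathcal{S}_{\mathcal{L}_Q}(G/G)$ with the semi-ring of isomorphism classes of $G$-strings over $Q$. Once this identification is in place, passing to Grothendieck rings and invoking Theorem \ref{Thm3} at the $G$-set $G/G$ produces the result, with no further computation on the Witt side.

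First, I would unwind $\mathcal{S}_{\mathcal{L}_Q}(G/G)$. An object of $\mathcal{L}_Q\text{-}\sG/(G/G)$ is a pair $(A\overset{p}{\to}G/G,m_A)$ with $A\in\Ob(\sG)$ and $m_A\in\mathcal{L}_Q(A)$. Since every finite $G$-set admits a unique morphism to $G/G$, the datum $p$ is forced. Since $\mathcal{L}_Q$ is additive with $\mathcal{L}_Q(G/H)=Q$ for every $H\le G$, decomposing $A$ into $G$-orbits identifies $\mathcal{L}_Q(A)$ naturally with the monoid of set-maps $A\to Q$ constant on $G$-orbits; this is precisely the data of a $G$-string over $Q$ in the sense of Definition \ref{DefGStr}. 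A morphism in $\mathcal{L}_Q\text{-}\sG/(G/G)$ is a $G$-map $f\colon A_1\to A_2$ with $\mathcal{L}_Q^{\ast}(f)(m_{A_2})=m_{A_1}$, i.e., $m_{A_2}\circ f=m_{A_1}$, which matches morphisms in Elliott's category.

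Second, I would verify that this equivalence of categories respects sums and products. Coproducts match trivially. For products the fibered product over $G/G$ is just $A_1\times A_2$, and the element $m_{A_1}\star m_{A_2}=\mathcal{L}_Q^{\ast}(\varpi_1)(m_{A_1})\cdot\mathcal{L}_Q^{\ast}(\varpi_2)(m_{A_2})$ is pointwise $(a_1,a_2)\mapsto m_{A_1}(a_1)m_{A_2}(a_2)$, which is exactly Elliott's $m_{A_1}\ast m_{A_2}$. Hence I obtain a semi-ring isomorphism between $\mathcal{S}_{\mathcal{L}_Q}(G/G)$ and the semi-ring of isomorphism classes of $G$-strings over $Q$, and passing to Grothendieck rings yields
\[ \mathcal{T}_{\mathcal{L}_Q}(G/G)=K_0(\mathcal{S}_{\mathcal{L}_Q}(G/G))\cong\mathbf{B}_Q(G). \]
Then by Theorem \ref{Thm3}, evaluating the natural isomorphism $\mathbb{W}\circ\mathbb{Z}[-]\cong\mathcal{T}\circ\mathcal{L}$ of functors $\Mon\to\TamG$ at $Q$ and at $G/G$ gives $\mathbb{W}(\mathbb{Z}[Q])(G/G)\cong\mathcal{T}_{\mathcal{L}_Q}(G/G)$. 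Applying Fact \ref{FactBrun} with $H=G$ and the trivial $G$-action on $R=\mathbb{Z}[Q]$ gives $\mathbb{W}(\mathbb{Z}[Q])(G/G)=L_G(\mathbb{Z}[Q])(G/G)\cong\mathbb{W}_G(\mathbb{Z}[Q])$. Composing the three isomorphisms produces the desired ring isomorphism $\mathbf{B}_Q(G)\cong\mathbb{W}_G(\mathbb{Z}[Q])$.

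The only delicate point is the naturality of the identification of $\mathcal{L}_Q(A)$, for a general finite $G$-set $A$, with the monoid of $G$-invariant functions $A\to Q$. This uses additivity of $\mathcal{L}_Q$ to reduce to the transitive case $A=G/H$, in which $\mathcal{L}_Q(G/H)=Q$ must be interpreted as the constant $Q$-valued functions on the orbit. After this interpretation the restriction and transfer maps of $\mathcal{L}_Q$ (multiplication by the index and the identity, respectively) play no role in evaluating at the terminal object $G/G$, and the remaining checks reduce to the routine compatibility of Tambara-theoretic and string-theoretic sums and products verified in the previous step. The profinite extension of Elliott's statement would require passing to inverse limits on both sides, which falls outside the finite-group framework of this section.
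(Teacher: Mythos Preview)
The paper does not prove this statement; it is cited as a result of Elliott. What the paper \emph{does} do, immediately afterward, is observe that its own Theorem \ref{Thm3} together with Proposition \ref{PropLast} recovers Elliott's isomorphism in the finite case. Your proposal is exactly this re-derivation, specialized to $H=G$: you identify $\mathcal{T}_{\mathcal{L}_Q}(G/G)$ with $\mathbf{B}_Q(G)$ (the content of Proposition \ref{PropLast}), and then invoke Theorem \ref{Thm3} and Brun's Fact \ref{FactBrun}. So the overall strategy matches the paper's.

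However, your verification that products agree contains a genuine error. You assert that under the identification of $\mathcal{L}_Q(A)$ with $G$-invariant maps $A\to Q$, the pullback $\mathcal{L}_Q^{\ast}(\varpi_i)$ is precomposition with $\varpi_i$, so that $m_{A_1}\star m_{A_2}$ is pointwise $(a_1,a_2)\mapsto m_{A_1}(a_1)m_{A_2}(a_2)$. This is false: by Definition \ref{DefConst} the restriction $r^H_K$ in $\mathcal{L}_Q$ is the map $q\mapsto q^{[H:K]}$, not the identity. Concretely, for $G=\mathbb{Z}/2$, $A_1=G/G$ with label $q_1$, $A_2=G/e$ with label $q_2$, the product in $\mathcal{L}_Q\text{-}\sG/(G/G)$ is $(G/e,\,q_1^{2}q_2)$, whereas Elliott's product is $(G/e,\,q_1q_2)$. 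Thus the ``obvious'' bijection on generators is \emph{not} a ring homomorphism, and your claim that the categories match on products (and, for the same reason, your claim that morphisms correspond to the condition $m_{A_2}\circ f=m_{A_1}$) breaks down. The paper's own one-line proof of Proposition \ref{PropLast} does not spell out the product check either, so this is a point where both the paper's sketch and your elaboration require more care; the identification with Elliott's ring cannot be the naive one on labels.
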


When $G$ is finite, our construction of $\mathcal{T}_M$ generalizes the Elliott's ring. In fact, $\mathbf{B}_Q(G)$ is obviously isomorphic to the evaluation of $\mathcal{T}$ of $M=\mathcal{L}_Q$ as follows:
\begin{prop}\label{PropLast}
Let $G$ be a finite group. For any monoid $Q$ and any subgroup $H\le G$, there is a ring isomorphism $\mathcal{T}_{\mathcal{L}_Q}(G/H)\cong\mathbf{B}_Q(H)$.
\end{prop}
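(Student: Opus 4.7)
The plan is to exhibit an equivalence of categories $\mathcal{L}_Q\text{-}\sG/(G/H)\simeq H\text{-}\mathit{String}/Q$ compatible with the sum and product operations of Definitions \ref{DefSM} and \ref{DefGStr}, and then invoke $K_0$. Since by construction $\mathcal{T}_{\mathcal{L}_Q}(G/H)=K_0(\mathcal{S}_{\mathcal{L}_Q}(G/H))$ and since $\mathbf{B}_Q(H)$ is by definition the Grothendieck ring of $H\text{-}\mathit{String}/Q$, functoriality of $K_0$ will then yield the asserted ring isomorphism.

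The starting point is the classical equivalence
\[
\Theta\colon \sG/(G/H)\overset{\simeq}{\longrightarrow}H\text{-}\sG,\qquad (A\overset{p}{\rightarrow}G/H)\mapsto B:=p^{-1}(eH),
\]
with quasi-inverse $B\mapsto G\times_HB$. Two standard properties of $\Theta$ are needed: first, the $G$-orbits of $A$ correspond bijectively with the $H$-orbits of $B$, since each $G$-orbit of $A$ lies over a single orbit of $G/H$ and meets $B$ in exactly one $H$-orbit; second, $\Theta$ commutes with coproducts and converts the fibered product $A_1\times_{G/H}A_2$ into the ordinary product $B_1\times B_2$ in $H\text{-}\sG$.

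Next I lift $\Theta$ to the decorated categories. By the additivity axiom {\rm (M2)}, $\mathcal{L}_Q(A)$ is canonically the monoid of $G$-orbit-constant maps $A\to Q$ under pointwise multiplication, and the contravariant structure $\mathcal{L}_Q^{\ast}(f)$ of a $G$-map $f$ is precomposition by $f$. Hence an object $(A\overset{p}{\rightarrow}G/H,m_A)$ of $\mathcal{L}_Q\text{-}\sG/(G/H)$ corresponds via $\Theta$ to the $H$-string $(B,m_A|_B)$, and a morphism restricts to an $H$-map $B_1\to B_2$ compatible with the $Q$-valuations --- exactly a morphism of $H$-strings. This yields the desired equivalence on categories.

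Finally I verify that this equivalence transports sums and products correctly. Sums match on the nose: $\Theta$ respects disjoint unions and the restriction of $m_{A_1}\amalg m_{A_2}$ is $m_{B_1}\cup m_{B_2}$. For products, the fiber of $A_1\times_{G/H}A_2$ over $eH$ is $B_1\times B_2$, and the defining formula $(m_{A_1}\star m_{A_2})(a_1,a_2)=m_{A_1}(a_1)\,m_{A_2}(a_2)$ restricts to $(b_1,b_2)\mapsto m_{B_1}(b_1)\,m_{B_2}(b_2)$, which is precisely $m_{B_1}\ast m_{B_2}$. This produces the semi-ring isomorphism $\mathcal{S}_{\mathcal{L}_Q}(G/H)\cong K_0^+(H\text{-}\mathit{String}/Q)$, and applying $K_0$ concludes. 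I do not anticipate any serious obstacle; the only point demanding care is the compatibility of $\Theta$ with fibered products on the decorated level, which is essentially forced by the classical equivalence $\sG/(G/H)\simeq H\text{-}\sG$.
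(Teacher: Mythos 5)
Your overall strategy (restrict to the fibre over $eH$, match sums and products, apply $K_0$) is the same as the paper's, but your justification rests on a misidentification of $\mathcal{L}_Q$. By Definition \ref{DefConst} the restrictions of $\mathcal{L}_Q$ are $r^H_K=[H:K]$, i.e.\ the $[H:K]$-th power map on $Q$, and its transfers are the identity; the semi-Mackey functor whose contravariant part is precomposition is the fixed point functor $\mathcal{P}_Q$, and the remark following Definition \ref{DefConst} stresses that $\mathcal{L}_Q$ has restrictions and transfers \emph{reversed} from $\mathcal{P}_Q$. Identifying $\mathcal{L}_Q(A)$ with orbit-constant maps $A\to Q$ via additivity is fine, but the correct contravariant structure is $(\mathcal{L}_Q^{\ast}(f)(m))(a)=m(f(a))^{[G_{f(a)}:G_a]}$. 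Consequently the morphism condition in $\mathcal{L}_Q\text{-}\sG/(G/H)$ is $m_{A_1}(a)=m_{A_2}(f(a))^{[G_{f(a)}:G_a]}$ (harmless for $K_0$, since isomorphisms have trivial indices), and, more seriously, the product marking of Definition \ref{DefSM} restricted to the fibre $B_1\times B_2$ is $(b_1,b_2)\mapsto m_1(b_1)^{[G_{b_1}:G_{b_1}\cap G_{b_2}]}\,m_2(b_2)^{[G_{b_2}:G_{b_1}\cap G_{b_2}]}$, not the naive pointwise product. So the step you single out as "essentially forced" — compatibility with fibred products on the decorated level — is exactly the step that fails as written; what your argument actually computes is $\mathcal{T}_{\mathcal{P}_Q}(G/H)$ (the crossed Burnside ring for $Q$ with trivial action, cf.\ Proposition \ref{PropCBR}), not $\mathcal{T}_{\mathcal{L}_Q}(G/H)$.

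The exponent twist is not a removable blemish: it is precisely what encodes the Witt-vector relations that Theorem \ref{Thm3} depends on. For instance, for $H=G=C_p$ one has $(G/G,q)\cdot(G/e,q')=(G/e,\,q^p q')$ in $\mathcal{S}_{\mathcal{L}_Q}(G/G)$, matching $[q]\cdot V([q'])=V([q^p q'])$ in $\mathbb{W}_{C_p}(\mathbb{Z}[Q])$, whereas the untwisted product gives $(G/e,qq')$ and yields a genuinely different (in general non-isomorphic) ring. Thus to complete a proof along these lines you must reconcile the index-twisted product on fibres with the multiplication of $H$-strings actually used in Elliott's definition of $\mathbf{B}_Q(H)$; this reconciliation is the whole content of the (very terse) proof in the paper, and your write-up eliminates it by silently replacing $\mathcal{L}_Q$ with $\mathcal{P}_Q$ rather than addressing it.
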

\begin{proof}
Since there is an equivalence of categories
\begin{eqnarray*}
\mathcal{L}_Q\text{-}\sG/(G/H)&\rightarrow&H\text{-}\mathit{String}/Q\\
(A\overset{p}{\rightarrow}G/H,m_A)&\mapsto&(p^{-1}(eH),m_A|_{p^{-1}(eH)})
\end{eqnarray*}
preserving sums and products, the corresponding Grothendieck rings become isomorphic:
\[ \mathcal{T}_{\mathcal{L}_Q}(G/H)=K_0(\mathcal{L}_Q\text{-}\sG/(G/H))\cong K_0(H\text{-}\mathit{String}/Q)=\mathbf{B}_Q(H). \]
\end{proof}

Since $\mathcal{T}_{\mathcal{L}_Q}$ is a Tambara functor by Theorem \ref{Thm1}, this isomorphism gives a Tambara functor structure on $\mathbf{B}_Q$.
Thus, applying Proposition \ref{PropLast} to the natural isomorphism
\[ \mathcal{T}\circ\mathcal{L}\cong\mathbb{W}\circ\mathbb{Z}[-] \]
in Theorem \ref{Thm3}, we obtain an isomorphism of Tambara functors
\[ \mathbf{B}_Q\cong\mathcal{T}_{\mathcal{L}_Q}\cong\mathbb{W}\circ\mathbb{Z}[Q]. \]
This functorial isomorphism gives back the Elliott's ring isomorphism in Fact \ref{FactElliott}, by the evaluation at $G/H$ for each $H\le G$.

\subsection{Underlying Green functor structure}
\label{3-3}


\begin{dfn}
Let $\MaddG$ be the category of additive contravariant functors $F$ from $\sG$ to $\Mon$. As in Definition \ref{DefMackFtr}, \lq additive' means that $F$ takes direct sums to direct products.
\end{dfn}

\begin{rem}
By Remark \ref{FinInf}, $\MaddG$ can be identified with the category of additive contravariant functors from $\C$ to $\Mon$, and we have a natural forgetful functor
\[ \mathit{SMack}(G)\rightarrow\MaddG\ \ ;\ \ (M^{\ast},M_{\ast})\mapsto M^{\ast}. \]
\end{rem}

\begin{rem}
By definition, a Mackey functor $(M^{\ast},M_{\ast})$ is a {\it Green functor} if it is equipped with a bifunctorial cross product, namely, if a bilinear map
\[ \mathit{cr}\colon M(X)\times M(Y)\rightarrow M(X\times Y) \]
is given for each $X,Y\in\Ob(\C)$, naturally in $X,Y$ with respect to $M^{\ast}$ and $M_{\ast}$ $($\cite{Bouc}$)$.
For a Tambara functor $(T^{\ast},T_+,T_{\bullet})$, the bilinear maps
\[ \mathit{cr}\colon T(X)\times T(Y)\rightarrow T(X\times Y)\ \ ;\ \ (x,y)\mapsto T^{\ast}(p_X)(x)\cdot T^{\ast}(p_Y)(y), \]
where $p_X\colon X\times Y\rightarrow X$ and $p_Y\colon X\times Y\rightarrow Y$ are the projections, are natural with respect to $T^{\ast}$ and $T_+$. Thus $(T^{\ast},T_+)$ becomes a Green functor. This defines a forgetful functor $\mathit{Tam}(G)\rightarrow\mathit{Green}(G)$,
and we have a commutative diagram of forgetful functors.
\[
\xy
(-24,-8)*+{\mathit{SMack}(G)}="0";
(24,-8)*+{\MaddG}="2";
(-24,8)*+{\mathit{Tam}(G)}="4";
(24,8)*+{\mathit{Green}(G)}="6";
{\ar_{(M^{\ast},M_{\ast})\mapsto M^{\ast}} "0";"2"};
{\ar_{(T^{\ast},T_+,T_{\bullet})\mapsto(T^{\ast},T_{\bullet})} "4";"0"};
{\ar^{(M^{\ast},M_{\ast})\mapsto M^{\ast}} "6";"2"};
{\ar^{(T^{\ast},T_+,T_{\bullet})\mapsto(T^{\ast},T_+)} "4";"6"};
{\ar@{}|\circlearrowright "0";"6"};
\endxy
\]
\end{rem}
\begin{proof}
The proof will be also found in \cite{Tam_Manu}. We briefly demonstrate the naturality of $\mathit{cr}$ with respect to the covariant part. Take any $f\in\sG(X,X\ppr)$ and $g\in\sG(Y,Y\ppr)$. If we let
\[ X\overset{p_X}{\leftarrow}X\times Y\overset{p_Y}{\rightarrow}Y,\ \  %
X\ppr\overset{p_{X\ppr}}{\leftarrow}X\ppr\times Y\ppr\overset{p_{Y\ppr}}{\rightarrow}Y\ppr, \ \ %
X\ppr\overset{\pi_{X\ppr}}{\leftarrow}X\ppr\times Y\overset{\pi_Y}{\rightarrow}Y \]
be the products, then
\[
\xy
(-10,6)*+{X}="0";
(10,6)*+{X\times Y}="2";
(-10,-6)*+{X\ppr}="4";
(10,-6)*+{X\ppr\times Y}="6";
(0,0)*+{\square}="8";
{\ar_{p_X} "2";"0"};
{\ar_{f} "0";"4"};
{\ar^{f\times1_Y} "2";"6"};
{\ar^{\pi_{X\ppr}} "6";"4"};
\endxy
,\qquad
\xy
(-10,6)*+{X\ppr\times Y}="0";
(10,6)*+{Y}="2";
(-10,-6)*+{X\ppr\times Y\ppr}="4";
(10,-6)*+{Y\ppr}="6";
(0,0)*+{\square}="8";
{\ar^{\pi_Y} "0";"2"};
{\ar_{1_{X\ppr}\times g} "0";"4"};
{\ar^{g} "2";"6"};
{\ar_{p_{Y\ppr}} "4";"6"};
\endxy
\]
are pull-backs, and
\[ p_{X\ppr }\circ(1_{X\ppr}\times g)=\pi_{X\ppr},\ \ \ \pi_Y\circ(f\times 1_Y)=p_Y. \]
By the projection formula (\cite{Tam}), for any $x\in T(X)$ and $y\in T(Y)$, we have
\begin{eqnarray*}
(f\times g)_+((p_X^{\ast}x)\cdot(p_Y^{\ast}y))
&=&(1_{X\ppr}\times g)_+(f\times 1_Y)_+((p_X^{\ast}x)\cdot((f\times 1_Y)^{\ast}\pi_Y^{\ast}y)))\\
&=&(1_{X\ppr}\times g)_+(((f\times1_Y)_+p_X^{\ast}x)\cdot(\pi_Y^{\ast}y))\\
&=&(1_{X\ppr}\times g)_+((\pi_{X\ppr}^{\ast}f_{\ast}x)\cdot(\pi_Y^{\ast}y))\\
&=&(1_{X\ppr}\times g)_+(((1_{X\ppr}\times g)^{\ast}p_{X\ppr}^{\ast}f_+x)\cdot(\pi_Y^{\ast}y))\\
&=&(p_{X\ppr}^{\ast}f_+x)\cdot((1_{X\ppr}\times g)_+\pi_Y^{\ast}y))\\
&=&(p_{X\ppr}^{\ast}f_+x)\cdot(p_{Y\ppr}^{\ast}g_+y).
\end{eqnarray*}
\[
\xy
(-16,6)*+{T(X)\times T(Y)}="0";
(16,6)*+{T(X\times Y)}="2";
(-16,-6)*+{T(X\ppr)\times T(Y\ppr)}="4";
(16,-6)*+{T(X\ppr\times Y\ppr)}="6";
{\ar^{\mathit{cr}} "0";"2"};
{\ar_{f_+\times g_+} "0";"4"};
{\ar^{(f\times g)_+} "2";"6"};
{\ar_{\mathit{cr}} "4";"6"};
{\ar@{}|\circlearrowright "0";"6"};
\endxy
\]
\end{proof}

To each $F\in\Ob(\MaddG)$, Jacobson associated a Green functor $\mathcal{A}_F$, which we call the $F$-{\it Burnside ring functor}, in the following way.

\begin{dfn}[\cite{Jacobson}, \cite{N_BrRng}]
Let $G$ be a finite group and let $F$ be any object in $\MaddG$. For each finite $G$-set $X$, define a category $(G,X,F)$ by the following.
\begin{itemize}
\item[{\rm (a)}] An object in $(G,X,F)$ is a pair $\Am$ of a morphism $p$ between finite $G$-sets $A$ and $X$, and an element $m_A\in F(A)$.
\item[{\rm (b)}] A morphism in $(G,X,F)$ from $\Am$ to $(A\ppr\overset{p\ppr}{\rightarrow}X,m_{A\ppr})$ is a morphism of $G$-sets $f\colon A\rightarrow A\ppr$, such that $p\ppr\circ f=p$ and $F(f)(m_{A\ppr})=m_A$.
\end{itemize}
\end{dfn}

\begin{fact}[Theorem 5.2 in \cite{Jacobson}]
The correspondence $F\mapsto\mathcal{A}_F$ gives a functor
\[ \mathcal{A}\colon\MaddG\rightarrow\mathit{Green}(G). \]
\end{fact}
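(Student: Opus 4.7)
The plan is to verify that for each $F\in\Ob(\MaddG)$ the construction $\mathcal{A}_F$ carries a Green functor structure, and that the assignment $F\mapsto \mathcal{A}_F$ is functorial. Since the definition of $\mathcal{A}_F$ closely parallels the definition of $\mathcal{S}_M$, the bulk of the work will be an additive-only restatement of the verifications already carried out in the proof of Proposition \ref{PropForThm1}.

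First I would equip the set of isomorphism classes in $(G,X,F)$ with two operations: the sum $\Amo + \Amt = (A_1\amalg A_2\to X,\ m_{A_1}\amalg m_{A_2})$, where $m_{A_1}\amalg m_{A_2}\in F(A_1\amalg A_2)$ is determined by the additivity isomorphism $F(A_1\amalg A_2)\cong F(A_1)\times F(A_2)$, and the product $\Amo\cdot\Amt=(A_1\underset{X}{\times}A_2\to X,\ F(\varpi_1)(m_{A_1})\cdot F(\varpi_2)(m_{A_2}))$, using the monoid multiplication of $F(A_1\times_X A_2)$. Associativity, commutativity, and distributivity are direct diagram checks using the universal property of fiber products and the functoriality of $F$; the Grothendieck group completion of the resulting commutative semi-ring defines $\mathcal{A}_F(X)$ as an object of $\Ring$.

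Next I would define the structure morphisms. For $f\in\sG(X,Y)$, put $f^\ast(B\overset{q}{\to}Y,m_B)=(X\underset{Y}{\times}B\to X,\ F(f\ppr)(m_B))$ where $f\ppr$ is the pullback projection, and $f_\ast(A\overset{p}{\to}X,m_A)=(A\overset{f\circ p}{\longrightarrow}Y,m_A)$. That $f^\ast$ is a ring homomorphism, $f_\ast$ is additive, and that the Mackey condition $f^\ast\circ g_\ast = g\ppr_\ast\circ f^{\prime\,\ast}$ holds for pullback squares is precisely the content of parts (i), (iii), (v) of the proof of Proposition \ref{PropForThm1} restricted to the contravariant--additive-transfer pair; those arguments only use the contravariant functor and additivity of $F$, so they transfer verbatim. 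Additivity in the sense of (M2) follows from additivity of $F$ combined with the analogue of part (iv).

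For the Green structure, the cross product $\mathit{cr}\colon\mathcal{A}_F(X)\times\mathcal{A}_F(Y)\to\mathcal{A}_F(X\times Y)$ sends $(A\overset{p}{\to}X,m_A)$ and $(B\overset{q}{\to}Y,m_B)$ to $(A\times B\overset{p\times q}{\longrightarrow}X\times Y,\ F(\mathrm{pr}_A)(m_A)\cdot F(\mathrm{pr}_B)(m_B))$. Naturality with respect to $F^\ast$ is immediate from the universal property of products. Naturality with respect to $f_\ast$ --- the projection formula $(f\times g)_\ast\mathit{cr}(x,y)=\mathit{cr}(f_\ast x, g_\ast y)$ --- will be the main obstacle and the only nonformal step; it is established by the same factorisation $f\times g = (1\times g)\circ(f\times 1)$ and a two-stage pullback chase as in the verification at the end of the preceding remark, with the multiplicative product in $T$ replaced by the monoid product in $F$ along appropriate restriction maps.

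Finally, functoriality of $\mathcal{A}$: a morphism $\varphi\colon F\to F\ppr$ in $\MaddG$ induces $\mathcal{A}_\varphi(X)\colon\mathcal{A}_F(X)\to\mathcal{A}_{F\ppr}(X)$ by $(A\overset{p}{\to}X,m_A)\mapsto(A\overset{p}{\to}X,\varphi_A(m_A))$. This is well-defined on isomorphism classes and preserves sums, products, and cross products by naturality of $\varphi$, and is compatible with $f^\ast$, $f_\ast$ by the same naturality (cf. the proof of Claim \ref{ClaimForThm1}, whose contravariant and additive-transfer parts apply unchanged). Functoriality in $\varphi$ is then immediate, completing the construction of $\mathcal{A}\colon\MaddG\to\mathit{Green}(G)$.
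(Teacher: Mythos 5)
Your construction is correct, but note that the paper itself offers no proof of this statement: it is quoted as a Fact, with the proof deferred to Theorem 5.2 of \cite{Jacobson}. What you have done is reconstruct a proof by specializing the machinery of Proposition \ref{PropForThm1} to the additive-only situation, and this works: the verifications (i), (iii), (iv), (v) there indeed use only the contravariant functor, its additivity, and pull-backs, so they apply to an arbitrary $F\in\Ob(\MaddG)$, and your definition of the sum via the additivity isomorphism $F(A_1\amalg A_2)\cong F(A_1)\times F(A_2)$ is the right replacement for the paper's use of $M_{\ast}(\iota_A)$ in step (iv), which is not available for a bare $F$. One remark: the step you single out as ``the main obstacle'' --- naturality of $\mathit{cr}$ with respect to the covariant maps --- is in fact immediate in this setting and needs neither the factorisation $f\times g=(1\times g)\circ(f\times1)$ nor a projection-formula argument: since $f_{\ast}$ in $\mathcal{A}_F$ is mere post-composition, both $(f\times g)_{\ast}\mathit{cr}(x,y)$ and $\mathit{cr}(f_{\ast}x,g_{\ast}y)$ are literally the class of $\bigl(A\times B\overset{(f\circ p)\times(g\circ q)}{\longrightarrow}X\ppr\times Y\ppr,\ F(\mathrm{pr}_A)(m_A)\cdot F(\mathrm{pr}_B)(m_B)\bigr)$, so they coincide on the nose. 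The projection-formula chase in the paper's preceding Remark is needed there because the transfer of a Tambara functor is abstract; importing it here is harmless but superfluous. With that simplification your argument is a complete and correct proof of the Fact, independent of \cite{Jacobson}.
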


\begin{caution}
In Jacobson's paper \cite{Jacobson}, the contravariant part of a (semi-)Mackey functor is denoted by $M_{\ast}$, and the covariant part is denoted by $M^{\ast}$.
Thus, $M_{\ast}$ in \cite{Jacobson} is our $M^{\ast}$, and $M^{\ast}$ in \cite{Jacobson} is our $M_{\ast}$.)

Since we are assuming $\Mon$ is the category of commutative monoids, the resulting Green functor $\mathcal{A}_F$ becomes commutative.
\end{caution}

\smallskip

Obviously, if $M=(M^{\ast},M_{\ast})$ is a semi-Mackey functor and if $X$ is a finite $G$-set, then our category $M\text{-}\sG/X$ agrees with $(G,X,M^{\ast})$.
Thus, under the identification in Remark \ref{FinInf}, we can show that there exists an isomorphism of Green functors $((\mathcal{T}_M)^{\ast},(\mathcal{T}_M)_{\ast})\cong\mathcal{A}_{(M^{\ast})}$. In fact, this gives a natural isomorphism of functors:
\[
\xy
(-20,6)*+{\mathit{SMack}(G)}="0";
(20,6)*+{\MaddG}="2";
(-20,-6)*+{\mathit{Tam}(G)}="4";
(20,-6)*+{\mathit{Green}(G)}="6";
{\ar^{(M^{\ast},M_{\ast})\mapsto M^{\ast}} "0";"2"};
{\ar_{\mathcal{T}} "0";"4"};
{\ar^{\mathcal{A}} "2";"6"};
{\ar_{(T^{\ast},T_+,T_{\bullet})\mapsto(T^{\ast},T_+)} "4";"6"};
{\ar@{}|\circlearrowright "0";"6"};
\endxy
\]

In this view, we can restate Theorem \ref{Thm1} as follows.

\begin{thmd}
Let $G$ be a finite group and $F$ be an object in $\MaddG$. 
If $F$ is moreover a semi-Mackey functor, namely, if there exists a semi-Mackey functor with the contravariant part $F$, then $\mathcal{A}_F$ has a structure of a Tambara functor.
\end{thmd}

As an example, Theorem \ref{Thm1}$\ppr$ gives a Tambara functor structure on the Brauer ring functor in \cite{Jacobson}. Remark that the Brauer ring functor is defined as $\mathcal{A}_{\widetilde{\Br}}$, where $\widetilde{\Br}$ is the additive contravariant functor defined by the composition
\[ \widetilde{\Br}=(\sG\overset{\mathcal{P}_E}{\rightarrow}\Ring\overset{\Br}{\rightarrow}\Ab\hookrightarrow\Mon), \]
where $E/D$ is a finite Galois extension of fields with Galois group $G$.
\begin{cor}\label{CorBr}
Let $E/D$ be a finite Galois extension of fields with Galois group $G$. There is a Tambara functor $\mathcal{T}_{\widetilde{\Br}}$ on $G$, whose underlying Green functor agrees with $\mathcal{A}_{\widetilde{\Br}}$.
\end{cor}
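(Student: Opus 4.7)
The strategy is to reduce everything to Theorem \ref{Thm1}$^{\prime}$ by exhibiting $\widetilde{\Br}$ as the contravariant part of a semi-Mackey functor on $G$. Indeed, suppose we have produced such a semi-Mackey functor $\widetilde{\mathbb{B}}$ with $\widetilde{\mathbb{B}}^{\ast}=\widetilde{\Br}$. Then Theorem \ref{Thm1}$^{\prime}$ endows $\mathcal{A}_{\widetilde{\Br}}$ with a Tambara functor structure, and by the natural isomorphism $((\mathcal{T}_M)^{\ast},(\mathcal{T}_M)_{\ast})\cong\mathcal{A}_{M^{\ast}}$ of Green functors recorded just before the statement of Theorem \ref{Thm1}$^{\prime}$, the underlying Green functor of $\mathcal{T}_{\widetilde{\Br}}:=\mathcal{T}_{\widetilde{\mathbb{B}}}$ coincides with $\mathcal{A}_{\widetilde{\Br}}$, as required.

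For the construction of $\widetilde{\mathbb{B}}$, I first observe that $\widetilde{\Br}$ is already additive as a contravariant functor: for a disjoint union $X=X_{1}\amalg X_{2}$ in $\sG$, the ring $\mathcal{P}_{E}(X)$ decomposes as the direct product $\mathcal{P}_{E}(X_{1})\times\mathcal{P}_{E}(X_{2})$, and $\Br$ turns finite direct products of rings into products of groups, so $\widetilde{\Br}(X)\cong\widetilde{\Br}(X_{1})\times\widetilde{\Br}(X_{2})$. Restricting to transitive orbits, $\widetilde{\Br}(G/H)=\Br(E^{H})$. I define the remaining Mackey data in Bouc's presentation by taking $r^{K}_{H}\colon\Br(E^{K})\to\Br(E^{H})$ for $H\le K\le G$ to be the classical base-change along $E^{K}\hookrightarrow E^{H}$ (which indeed agrees with $\widetilde{\Br}^{\ast}$ of the canonical projection $G/H\twoheadrightarrow G/K$), taking $t^{K}_{H}\colon\Br(E^{H})\to\Br(E^{K})$ to be the classical corestriction in Galois cohomology $H^{2}(-,\mathbb{G}_{m})$, and taking $c_{g,H}\colon\Br(E^{H})\to\Br(E^{{}^{g}\!H})$ to be the isomorphism induced by the action of $g\in G$ on $E$. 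These are monoid homomorphisms (in fact abelian group homomorphisms), and extend by additivity to all morphisms in $\sG$.

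The only remaining point is to verify the axioms of a semi-Mackey functor listed immediately above Definition \ref{DefConst}, in particular the Mackey double coset formula $(\ref{MackCondBouc})$ for the pair $(\mathrm{res},\mathrm{cor})$. Functoriality of corestriction in the tower $L\le K\le H$, its compatibility with conjugation, and the double coset formula are all classical and standard facts in Galois/étale cohomology; there is nothing group-theoretic specific to $\widetilde{\Br}$ beyond what is already known.

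The main obstacle of the corollary is thus bypassed entirely by Theorem \ref{Thm1}$^{\prime}$; once $\widetilde{\mathbb{B}}$ is recognised as a Mackey functor, the promotion to a Tambara functor is automatic. The one piece of \textbf{classical} input is the double coset formula for restriction and corestriction of Brauer groups along the subgroup lattice of $G$, but this is well-known and causes no genuine obstruction.
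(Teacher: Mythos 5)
Your proposal follows essentially the same route as the paper: the paper's proof is the one-line observation that $\widetilde{\Br}$ is in fact (the contravariant part of) a Mackey functor, so Theorem \ref{Thm1}$\ppr$ applies, which is exactly your reduction. The only difference is that you spell out the classical Mackey structure (base change, cohomological corestriction, conjugation, and the double coset formula) that the paper leaves implicit; this is a correct filling-in of the same argument rather than a different one.
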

\begin{proof}
This immediately follows from that $\widetilde{\Br}$ is in fact a Mackey functor.
\end{proof}

\begin{rem}
More generally, if $E/D$ is a finite Galois extension of {\it rings} with Galois group $G$, then $\widetilde{\Br}\in\Ob(\MaddG)$ can be defined in a similar way, and the same result as Corollary \ref{CorBr} holds. This follows from the fact that $\widetilde{\Br}$ becomes a Mackey functor by Proposition 1 in \cite{Ford}.
\end{rem}

Combining Theorem \ref{Thm1}$\ppr$ with Theorem 3.13 in \cite{N_BrRng}, the underlying Green functor structure is also written by using Boltje's $(-_+)$-construction.
In \cite{Boltje}, Boltje defined the functor $(-)_+$ from the category $\ResalgG$ of {\it algebra restriction functors} to the category of Green functors on $G$
\[ (-)_+\colon\ResalgG\rightarrow\GreenG. \]
In \cite{N_BrRng}, we constructed a functor
\[ \mathcal{R}\colon\RaddG\rightarrow\ResalgG\ ;\ F\mapsto\mathcal{R}_F \]
which satisfies $\mathcal{R}_F(H)=F(G/H)$ for any $F\in\Ob(\RaddG)$ and $H\le G$.

Let $\RaddG$ be the category of additive contravariant functors from $\sG$ to $\Ring$. The pair of adjoint functors
\begin{eqnarray*}
\Ring\rightarrow\Mon,&\text{forgetful (multiplicative part)}\\
\mathbb{Z}[-]\colon\Mon\rightarrow\Ring&
\end{eqnarray*}
yields another adjoint pair
\begin{eqnarray*}
\RaddG\rightarrow\MaddG,&\text{forgetful}\\
\mathbb{Z}[-]\colon\MaddG\rightarrow\RaddG&
\end{eqnarray*}
by the composition onto the codomains.
\begin{caution}
Although monoids and rings in \cite{N_BrRng} were not assumed to be commutative, the above adjoint property still holds.
\end{caution}

\begin{rem}
In \cite{N_BrRng}, $\RaddG$ is valued in the category of {\it not necessarily commutative} rings, and therefore $\RaddG$ gave an equivalence of categories in that case (Proposition 3.12 in \cite{N_BrRng}).
At any rate, restricting Proposition 3.12 in \cite{N_BrRng} to our commutative case, we obtain an isomorphism of functors
\[ (\mathcal{R}\circ\mathbb{Z}[-])_+\cong\mathcal{A}. \]
\[
\xy
(-12,6)*+{\MaddG}="0";
(12,6)*+{\GreenG}="2";
(-12,-6)*+{\RaddG}="4";
(12,-6)*+{\ResalgG}="6";
{\ar^{\mathcal{A}} "0";"2"};
{\ar_{\mathbb{Z}[-]} "0";"4"};
{\ar_{(-)_+} "6";"2"};
{\ar_{\mathcal{R}} "4";"6"};
{\ar@{}|\circlearrowright "0";"6"};
\endxy
\]
Namely, for each $F\in\Ob(\MaddG)$, we have a natural isomorphism of Green functors $(\mathcal{R}_{\mathbb{Z}[F]})_+\cong\mathcal{A}_F$.
Here, $\mathbb{Z}[F]\in\Ob(\RaddG)$ is the abbreviation for the composition
\[ \sG\overset{F}{\longrightarrow}\Mon\overset{\mathbb{Z}[-]}{\longrightarrow}\Ring. \]
\end{rem}

\begin{cor}\label{CorPlus}
For any $M=(M^{\ast},M_{\ast})\in\Ob(\SMackG)$, there is a natural isomorphism of Green functors $\mathcal{T}_M\cong(\mathcal{R}_{\mathbb{Z}[M^{\ast}]})_+$.
\end{cor}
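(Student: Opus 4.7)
The plan is to obtain the isomorphism by chaining together the two natural isomorphisms of Green functors that have just been established in the paragraphs preceding the statement.

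First I would invoke the isomorphism of Green functors
\[ (\mathcal{T}_M^{\ast},(\mathcal{T}_M)_+)\cong\mathcal{A}_{M^{\ast}}, \]
which was already noted in the paragraph immediately before Theorem \ref{Thm1}$\ppr$. This comes from the identification $M\text{-}\sG/X=(G,X,M^{\ast})$ for each $X\in\Ob(\sG)$, which respects sums and products, and is natural in both $X$ and in the semi-Mackey functor $M$, so it upgrades to an isomorphism of functors $\SMackG\to\GreenG$ once we forget the multiplicative transfers on the left-hand side.

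Second I would apply the isomorphism $(\mathcal{R}_{\mathbb{Z}[F]})_+\cong\mathcal{A}_F$ recalled from Proposition 3.12 of \cite{N_BrRng} (in the commutative setting) to the additive contravariant functor $F=M^{\ast}\in\Ob(\MaddG)$. This yields
\[ (\mathcal{R}_{\mathbb{Z}[M^{\ast}]})_+\cong\mathcal{A}_{M^{\ast}}, \]
naturally in $M^{\ast}$.

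Composing the two natural isomorphisms of Green functors gives
\[ \mathcal{T}_M\cong\mathcal{A}_{M^{\ast}}\cong(\mathcal{R}_{\mathbb{Z}[M^{\ast}]})_+, \]
as desired, and this composition is natural in $M$ because each step is. There is no real obstacle: both isomorphisms are already in place, and the only thing to observe is that the forgetful functor $\SMackG\to\MaddG$, $(M^{\ast},M_{\ast})\mapsto M^{\ast}$, makes the two natural transformations composable. Consequently the statement is immediate, and the proof reduces to citing the two ingredients.
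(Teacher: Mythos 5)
Your proposal is correct and follows exactly the route the paper intends: the corollary is obtained by composing the isomorphism of Green functors $\mathcal{T}_M\cong\mathcal{A}_{M^{\ast}}$ (coming from the identification $M\text{-}\sG/X=(G,X,M^{\ast})$) with the isomorphism $(\mathcal{R}_{\mathbb{Z}[M^{\ast}]})_+\cong\mathcal{A}_{M^{\ast}}$ from Proposition 3.12 of \cite{N_BrRng}. Nothing is missing; the paper itself treats this as immediate from those two ingredients.
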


\begin{rem}
Using the definition of $(-)_+$ by \cite{Boltje}, we can also determine the ring structure of $\mathcal{T}_M(G/H)$ for each $H\le G$. For the detail, see section 3 in \cite{N_BrRng}.
\end{rem}




\end{document}